\tikzstyle{vertex}=[circle, draw, inner sep=2pt, fill=white]
\newcommand{\E}{{\mathsf E}}
\newcommand{\F}{{\mathbb F}}
\renewcommand{\P}{{\mathsf P}}
\newcommand{\Q}{{\mathsf Q}}
\newcommand{\R}{{\mathbb R}}
\renewcommand{\S}{{\mathbb S}}
\newcommand{\N}{{\mathbb N}}
\newcommand{\Fcal}{{\mathcal F}}
\newcommand{\Gcal}{{\mathcal G}}
\newcommand{\Ocal}{{\mathcal O}}
\newcommand{\Pcal}{{\mathcal P}}
\DeclareMathOperator{\rk}{rank}
\newcommand{\fdot}{{\hspace{1pt}\cdot\hspace{1pt}}}
\DeclareMathOperator{\diam}{diam}
\DeclareMathOperator{\ri}{ri}
\DeclareMathOperator{\rbd}{rbd}
\DeclareMathOperator{\aff}{aff}
\DeclareMathOperator*{\essinf}{ess\,inf}
\newcommand{\essi}[1]{#1\text{-}\essinf}
\DeclareMathOperator{\tr}{tr}
\DeclareMathOperator{\interior}{int}
\DeclareMathOperator{\cl}{cl}
\newtheorem{theorem}{Theorem}
\newtheorem{corollary}[theorem]{Corollary}
\newtheorem{example}[theorem]{Example}
\newtheorem{lemma}[theorem]{Lemma}
\newtheorem{proposition}[theorem]{Proposition}
\newtheorem{remark}[theorem]{Remark}
\numberwithin{equation}{section}
\numberwithin{theorem}{section}
\DeclareMathOperator{\diag}{diag}
\begin{document}

\title{Minimum curvature flow and martingale exit times\footnote{Acknowledgement: Part of this research was completed while we visited I.~Karatzas at Columbia University, whom we thank for his hospitality. J.R. is also grateful 
to FIM at ETH Zurich for hosting.
We would like to thank F.~Da Lio, R.~Kohn, and M.~Shkolnikov for helpful discussions. We are especially grateful to I.~Karatzas and M.~Soner for  pointing out relevant references in the literature, stimulating discussions, and useful suggestions. J.R.~acknowledges financial
support from the EPSRC Research Grant EP/W004070/1.}}
\author{Martin Larsson\thanks{Department of Mathematical Sciences, Carnegie Mellon University, Wean Hall, 5000 Forbes Ave, Pittsburgh, Pennsylvania 15213, USA, \url{martinl@andrew.cmu.edu}.}
\and Johannes Ruf\thanks{Department of Mathematics, London School of Economics and Political Science, Columbia House, London, WC2A 2AE, UK, \url{j.ruf@lse.ac.uk}.}}
\date{January 11, 2022}

\maketitle

\begin{abstract}
We study the following question: What is the largest deterministic amount of time $T_*$ that a suitably normalized martingale $X$ can be kept inside a convex body $K$ in $\R^d$? We show, in a viscosity framework, that $T_*$ equals the time it takes for the relative boundary of $K$ to reach $X(0)$ as it undergoes a geometric flow that we call (positive) minimum curvature flow. This result has close links to the literature on stochastic and game representations of geometric flows. Moreover, the minimum curvature flow can be viewed as an arrival time version of the Ambrosio--Soner codimension-$(d-1)$ mean curvature flow of the $1$-skeleton of $K$. Our results are obtained by a mix of probabilistic and analytic methods.

\bigskip

\textbf{MSC 2020 Classification:} 93E20; 35J60; 49L25
\bigskip

\textbf{Keywords:} Curvature flow, Stochastic control, Viscosity solutions
\end{abstract}

\section{Introduction and main results}\label{S_main}

Let $d\ge2$ and let $K\subset\R^d$ be a convex body, i.e.\ a nonempty compact convex set. If $X=(X_1,\ldots,X_d)$ is a $d$-dimensional continuous martingale that starts inside $K$ and whose quadratic variation satisfies $\tr\langle X\rangle(t)=\langle X_1\rangle(t)+\cdots+\langle X_d\rangle(t) \equiv t$, then $X$ eventually leaves $K$. What is the maximal {\em deterministic} lower bound $T_*$ on the exit time, across all such martingales $X$? The answer is linked to the evolution of the (relative) boundary of $K$ as it undergoes a geometric flow that we refer to as {\em minimum curvature flow:} $T_*$ is equal to the lifetime of this flow. The minimum curvature flow resembles the well-known mean curvature flow, in particular its version in codimension $d-1$ introduced by \cite{amb_son_96}. Our goal is to develop the connection between the exit time problem and the minimum curvature flow in detail.

Our original motivation comes from a long-standing problem in mathematical finance, namely to characterize the worst-case time horizon for so-called {\em relative arbitrage}. In a suitably normalized setup, the answer turns out to be precisely $T_*$, with $K$ being the standard $d$-simplex. We do not discuss this connection further here; instead we provide full details in the companion paper \cite{lar_ruf_20}. Let us however emphasize that this application motivates us to consider convex bodies $K$ with nonsmooth boundary.

To give a precise description of our main results, let $X$ denote the coordinate process on the Polish space $\Omega=C(\R_+,\R^d)$ of all continuous trajectories in $\R^d$ with the locally uniform topology. Thus $X(t,\omega)=\omega(t)$ for all $\omega\in\Omega$ and $t\in\R_+$. Write $\Pcal(\Omega)$ for the set of all probability measures on $\Omega$ with the topology of weak convergence. For each $x\in\R^d$, define
\[
\Pcal_x = \left\{ \P \in \Pcal(\Omega)\colon \text{$X$ is a $\P$-martingale and $\P(X(0)=x )=\P(\tr\langle X\rangle(t)\equiv t)=1$}\right\},
\]
where the martingale property is understood with respect to the (raw) filtration generated by $X$.  
We always take $K\subset\R^d$ to be compact, but not necessarily convex unless explicitly stated. The first exit time from $K$ is
\begin{equation} \label{eq: tauK}
\tau_K = \inf\{t\ge0\colon X(t) \notin K\},
\end{equation}
and we are interested in computing the value function
\begin{equation}\label{eq_vxD}
v(x) = \sup_{\P\in\Pcal_x} \essi{\P} \tau_K.
\end{equation}
This is the largest deterministic almost sure lower bound on the exit time $\tau_K$ across all martingale laws $\P\in\Pcal_x$.

Our first result states that the value function solves a PDE with (degenerate) elliptic nonlinearity
\begin{equation}\label{MCF_FpM}
F(p,M) = \inf\left\{ -\frac12\tr(aM)\colon \text{$a\succeq 0$, $\tr(a)=1$, $ap=0$}\right\},
\end{equation}
where $a$ ranges through all symmetric matrices of appropriate size, and $a\succeq 0$ refers to the positive semidefinite order. The theorem uses the notion of viscosity solution, which is reviewed in Section~\ref{S_viscosity_soln} where also the proof is given.

\begin{theorem}\label{T_usc_vs}
Let $d\ge2$ and suppose $K$ is compact, but not necessarily convex. The value function $v$ is an upper semicontinuous viscosity solution to the nonlinear equation
\begin{equation}\label{PDE_1}
\text{$F(\nabla u,\nabla^2u)=1$}
\end{equation}
in $\interior(K)$ with zero boundary condition (in the viscosity sense).
\end{theorem}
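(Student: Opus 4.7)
The plan proceeds in three main steps.

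\textbf{Upper semicontinuity.} Given $x_n\to x_0$ and $\P_n\in\Pcal_{x_n}$ with $\essinf_{\P_n}\tau_K\ge v(x_n)-1/n$, Itô applied to $|x|^2$ combined with $\tr\langle X\rangle(t)\equiv t$ yields the uniform bound $\E_{\P_n}|X(t)|^2=|x_n|^2+t$, hence tightness of $\{\P_n\}$. A weak limit $\P$ belongs to $\Pcal_{x_0}$ by standard martingale-problem stability. Since $K$ is closed, $\{\tau_K<c\}$ is open in $\Omega$ in the local uniform topology, so Portmanteau gives $\essinf_\P\tau_K\ge\liminf_n v(x_n)$, proving that $v$ is USC.

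\textbf{Subsolution via DPP and Itô.} I would first establish the one-sided dynamic programming principle
\begin{equation*}
v(x_0)\le\sup_{\P\in\Pcal_{x_0}}\essinf_\P\bigl(\sigma+v(X(\sigma))\bigr),\qquad \sigma\le\tau_G,\ G\subset\interior(K)\text{ open neighborhood of }x_0,
\end{equation*}
via concatenation of near-optimal continuations from $X(\sigma)$ through a measurable selection. For the subsolution property, let $\varphi\in C^2$ be such that $v-\varphi$ attains a strict local max at $x_0\in\interior(K)$ with $v(x_0)=\varphi(x_0)$; applying the DPP with $\sigma=\tau_{B_r(x_0)}$ and using $v\le\varphi$ on $\overline{B}_r$, Itô's formula gives
\begin{equation*}
\sigma+\varphi(X(\sigma))-\varphi(x_0)=N_\sigma+\int_0^\sigma\bigl(1+\tfrac12\tr(a(s)\nabla^2\varphi(X(s)))\bigr)\,ds,
\end{equation*}
with $N$ a zero-mean local martingale of quadratic variation $\int_0^\cdot\nabla\varphi(X)^\top a\nabla\varphi(X)\,ds$. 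Near-optimality forces the essinf of the left-hand side to be arbitrarily close to $0$; a Dambis--Dubins--Schwarz type estimate on $N$ then bounds its quadratic variation and drives $a(s)\nabla\varphi(X(s))\to 0$ along near-optimal paths as $r\downarrow 0$. The drift condition finally yields $-\tfrac12\tr(aM_0)\le 1$ for a limiting $a$ with $ap_0=0$, so $F(p_0,M_0)\le 1$.

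\textbf{Supersolution and boundary.} For the supersolution property at a strict local min $x_0$ of $v_*-\varphi$, assume for contradiction $F(p_0,M_0)<1$ and pick admissible $a_0$ with $-\tfrac12\tr(a_0M_0)<1-\delta$. Setting $A_0A_0^\top=a_0$ and $X(t)=x_0+A_0W(t)$ for a Brownian motion $W$ defines some $\P\in\Pcal_{x_0}$; the constraint $a_0p_0=0$ makes the first-order Itô term vanish pathwise, so the expansion of $\varphi(X(t))+t$ about $x_0$ has strictly positive drift. Stopping at $\sigma=\tau_{B_r}$ and concatenating with a near-optimal continuation from $X(\sigma)$ produces $\P'\in\Pcal_{x_0}$ with $\essinf_{\P'}\tau_K>v_*(x_0)$, contradicting $v(x_0)\ge v_*(x_0)$. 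The zero boundary condition follows for $x\in\partial K$ from $\E|X(t)-x|^2=t$, compactness of $K$, and an induction over dimensions of faces: a continuous martingale that stays in $K$ with $X(0)\in\partial K$ must have vanishing quadratic variation in every direction normal to $K$ at $x$, reducing to a lower-dimensional problem that eventually collapses to an extreme point.

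\textbf{Main obstacle.} The most delicate step is the pathwise interpretation of the essinf DPP in the subsolution argument: converting the scalar lower bound on $\essinf(N_\sigma+\mathrm{drift})$ into the pointwise orthogonality $a(s)\nabla\varphi(X(s))=0$ is the precise mechanism by which the constraint $ap=0$ in the definition of $F$ arises, and it requires quantitative martingale-fluctuation estimates leveraging the normalization $\tr\langle X\rangle\equiv t$.
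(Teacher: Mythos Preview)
Your supersolution argument contains a concrete error. With $X(t)=x_0+A_0W(t)$ and constant $a_0=A_0A_0^\top$ satisfying $a_0p_0=0$, the first-order It\^o term $\int_0^\sigma\nabla\varphi(X(s))^\top dX(s)$ does \emph{not} vanish pathwise: its quadratic variation is $\int_0^\sigma\nabla\varphi(X(s))^\top a_0\nabla\varphi(X(s))\,ds$, which is zero only at $s=0$. This martingale therefore has strictly negative essential infimum, and you cannot conclude $\sigma+\varphi(X(\sigma))-\varphi(x_0)>0$ almost surely; only the expectation is positive, which is useless for the $\essinf$ concatenation. The paper resolves this by taking a \emph{state-dependent} diffusion $dX(t)=|S\nabla\varphi(X(t))|^{-1}S\nabla\varphi(X(t))\,dW(t)$ with $S$ skew-symmetric, so that $\nabla\varphi(X)^\top dX\equiv0$ by skew-symmetry along the whole path. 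The case $\nabla\varphi(\bar x)=0$ then requires separate handling via $F^*$ (Lemma~\ref{L:191122}), done in the paper by a Soner--Touzi perturbation when $\nabla^2\varphi(\bar x)$ has a positive eigenvalue and by an explicit rank-one SDE construction (Lemma~\ref{L_weaksolution}) otherwise.

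On the subsolution side you correctly isolate the difficulty, but a Dambis--Dubins--Schwarz estimate does not convert the pathwise lower bound into $a\nabla\varphi\to0$. The paper's mechanism is different: under the contradiction hypothesis $F(\nabla\varphi,\nabla^2\varphi)>1$ one first shows by compactness that on a small ball, $1+\tfrac12\tr(a\nabla^2\varphi)>0$ forces $\nabla\varphi^\top a\nabla\varphi\ge\varepsilon$; then a Girsanov tilt by the exponential martingale $Z$ with density process $-\tfrac{c}{\varepsilon}\bm1_J\nabla\varphi^\top dX$ (where $J$ is the set where the drift is positive) turns the pathwise inequality into a supermartingale whose terminal expectation is simultaneously strictly positive and nonpositive. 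Finally, your boundary argument aims at the wrong target: you attempt to prove $v=0$ on $\partial K$, but this is generally \emph{false} (e.g.\ on two-dimensional faces of a polytope), and the paper explicitly emphasizes this. The viscosity boundary condition only asks that at a boundary maximum either the interior inequality holds or $v(\bar x)\le0$; the paper simply observes that when $v(\bar x)>0$ the interior subsolution proof goes through verbatim, and the supersolution boundary condition is automatic because $v\ge0$.
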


The value function is always an \emph{upper semicontinuous} viscosity solution. As our next result shows, it is actually the unique viscosity solution in this class, provided that $K$ satisfies a certain additional condition. This condition holds for all strictly star-shaped compact sets, in particular for all convex bodies with nonempty interior. Our condition is however more general than that; see Example~\ref{E_T_lambda}. We also show that uniqueness may fail for star-shaped but not strictly star-shaped domains; see Example~\ref{E_non_strict_star}. This answers a question of \citet[Section~1.8]{koh_ser_06}. The proof of the following uniqueness theorem is given in Section~\ref{S_comparison}, and follows from a comparison principle proved there, Theorem~\ref{T_comparison}.

\begin{theorem}\label{T_uniqueness}
Let $d\ge2$ and suppose $K$ is compact. Assume there exist invertible affine maps $T_\lambda$ on $\R^d$, parameterized by $\lambda\in(0,1)$, such that $T_\lambda(K)\subset\interior(K)$ and $\lim_{\lambda\to1}T_\lambda=I$ (the identity). Then the value function $v$ is the unique upper semicontinuous viscosity solution to \eqref{PDE_1} in $\interior(K)$ with zero boundary condition (in the viscosity sense).
\end{theorem}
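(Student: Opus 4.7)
Since Theorem~\ref{T_uniqueness} is stated to follow from the comparison principle Theorem~\ref{T_comparison}, the plan is to apply comparison twice, with the roles of the two competing USC viscosity solutions swapped. The subtlety is that a comparison principle typically pits a USC subsolution against an \emph{LSC} supersolution, whereas \emph{a priori} we have only USC solutions; the family $\{T_\lambda\}$ is precisely what is needed to convert a USC solution into an LSC supersolution of the same PDE.

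Given any USC viscosity solution $u$ of \eqref{PDE_1} with zero boundary condition (the aim being to show $u=v$), I would define, for each $\lambda\in(0,1)$,
\[
u_\lambda(x)=\begin{cases} u(T_\lambda^{-1}x), & x\in T_\lambda(K),\\ +\infty, & x\in K\setminus T_\lambda(K). \end{cases}
\]
A preliminary comparison with the trivial subsolution $0$ gives $u\ge 0$, so $u_\lambda$ equals $0$ on the interface $\partial T_\lambda(K)\subset\interior(K)$ and then jumps upward to $+\infty$; together with the USC of $u$, this makes $u_\lambda$ LSC on $K$. A chain-rule computation, with $y=T_\lambda^{-1}x$, yields
\[
F\bigl(\nabla u_\lambda(x),\nabla^2 u_\lambda(x)\bigr)=\inf\Bigl\{-\tfrac12\tr\bigl(b\,\nabla^2 u(y)\bigr):\ b\succeq 0,\ b\,\nabla u(y)=0,\ \tr(T_\lambda^\top T_\lambda b)=1\Bigr\}.
\]
The contractive character of $T_\lambda$ encoded in $T_\lambda(K)\subset\interior(K)$ and $T_\lambda\to I$ is used to show that this infimum is at least $F(\nabla u,\nabla^2 u)=1$, so $u_\lambda$ is a viscosity supersolution of \eqref{PDE_1} on $\interior(T_\lambda(K))$; on $K\setminus T_\lambda(K)$ the supersolution inequality holds vacuously, since a finite test function cannot touch $+\infty$ from below.

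Applying Theorem~\ref{T_comparison} to the USC subsolution $v$ and the LSC supersolution $u_\lambda$ on $\interior(K)$, with boundary condition $v\le 0\le u_\lambda$ on $\partial K$, yields $v(x)\le u(T_\lambda^{-1}x)$ for every $x\in T_\lambda(K)$. Letting $\lambda\to 1$, so that $T_\lambda^{-1}x\to x$, and invoking the USC of $u$ gives $v\le u$ on $\interior(K)$. Swapping the roles of $v$ and $u$ in the same argument yields $u\le v$, hence $u=v$.

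The main obstacle is rigorously promoting the formal chain-rule computation to a viscosity supersolution statement for $u_\lambda$. The nonlinearity $F$ of \eqref{MCF_FpM} is \emph{not} invariant under arbitrary invertible linear maps, only under orthogonal ones, so the modified trace constraint $\tr(T_\lambda^\top T_\lambda b)=1$ does not directly reduce to $\tr(b)=1$. In the special case $T_\lambda=\lambda I$ the argument is immediate (one finds $F(\nabla u_\lambda,\nabla^2 u_\lambda)=\lambda^{-2}>1$, i.e.\ a strict supersolution), but for general $T_\lambda$ the inequality $F\ge 1$ requires a careful exploitation of the geometric hypothesis $T_\lambda(K)\subset\interior(K)$, compounded by the degeneracy of $F$ and the possible non-smoothness of $u$ and of $\partial K$. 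A natural safeguard is to work throughout with the perturbation $u_\lambda-\epsilon|x|^2$, which satisfies a \emph{strict} supersolution inequality $F\ge 1+\delta(\epsilon)$ uniformly in $\lambda$ close to $1$, apply comparison to this strict supersolution, and let $\epsilon\to 0$ after the fact.
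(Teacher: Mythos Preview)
Your approach misreads the division of labor between Theorem~\ref{T_uniqueness} and Theorem~\ref{T_comparison}. The comparison principle already bakes in the $T_\lambda$ machinery: its conclusion is $u\le w^*$ (with the upper envelope), obtained precisely by first proving $u\le w\circ T_\lambda$ and then letting $\lambda\to1$. Consequently the deduction of uniqueness is a two-liner. Given two USC viscosity solutions $u$ and $v$, apply Theorem~\ref{T_comparison} with subsolution $u$ and supersolution $w=v_*$; the latter is LSC, and is a supersolution because the supersolution property is defined in terms of $v_*$ anyway. The conclusion reads $u\le (v_*)^*\le v^*=v$, and swapping roles gives $u=v$. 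There is no need to touch $T_\lambda$ again at this stage.

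Your own construction also has a genuine gap. The function $u_\lambda=u\circ T_\lambda^{-1}$ on $T_\lambda(K)$ inherits \emph{upper} semicontinuity from $u$, not lower semicontinuity; extending by $+\infty$ outside $T_\lambda(K)$ does nothing to repair this in the interior of $T_\lambda(K)$, where $u$ may well be discontinuous (the paper exhibits convex bodies for which the value function itself is discontinuous). So $u_\lambda$ is not an LSC supersolution, and you cannot feed it into a standard comparison hypothesis. The correct manoeuvre---carried out inside the proof of Theorem~\ref{T_comparison}---is to compare the \emph{sub}solution against $w\circ T_\lambda$ rather than $u\circ T_\lambda^{-1}$ against $w$, after first replacing $u$ by the strict subsolution $\delta u$ so that the operator-norm defect $|T_\lambda^{-1}|^2$ arising from the non-invariance of $F$ under general linear maps can be absorbed by choosing $\lambda$ close to $1$. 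Your last-paragraph worries about this non-invariance are well founded; the remedy is the $\delta$-trick, not the quadratic perturbation $u_\lambda-\varepsilon|x|^2$.
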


\begin{remark}
We point out that this uniqueness result is designed to handle the non-smooth convex domains that arise in the financial applications of interest. There are however other natural domains that are not covered by this result, such a various non-convex domains with smooth boundary. Proving comparison theorems (and hence uniqueness results) for such domains is an interesting problem which we do not consider here; see however \citet{son_86_I,son_86_II,bar_rou_sou_99,bar_lio_04}.
\end{remark}

Theorem~\ref{T_uniqueness} characterizes the value function even in cases where it is not continuous. In fact, we will give examples showing that the value function may be discontinuous even when $K$ is a convex body.

Before describing this and related results, we briefly discuss links to the existing literature and the connection to geometric flows.

Our results tie in with a well established literature on stochastic representations of geometric PDEs, initiated by  \cite{BCQ:01} and \cite{son_tou_02,son_tou_02b,son_tou_03}.
In particular, Soner and Touzi introduced the notion of \emph{stochastic target problem} and based their analysis on an associated dynamic programming principle; see also \citet{Bouchard:Vu}.  Part of our analysis can be cast in the language of stochastic target problems, and this connection is described further in Remark~\ref{R_stoch_target}.

The control problem \eqref{eq_vxD} is formulated over an infinite time horizon.   As a result, our PDE is elliptic rather than parabolic, and, as explained next, the solution acquires the interpretation of {\em arrival time} of an evolving surface. 
 This is reminiscent of the two-person deterministic game introduced by \cite{spe_77} and linked to the positive curvature flow by \cite{koh_ser_06}.
In a similar spirit there are also the works of \cite{Peres:2009} on the tug-of-war game and infinity Laplacian, and more recently \cite{Drenska:2020} and \cite{calder2018limit}.

The geometric meaning of \eqref{PDE_1} is most clearly conveyed by reasoning as in Section~1.2 of \citet{koh_ser_06}. This is standard in the literature on geometric flows and paraphrased here for convenience. Let $K$ be strictly convex with smooth boundary $\partial K$. Suppose we are given a family $\{\Gamma_t\colon t\ge0\}$ of smooth convex surfaces with $\Gamma_0=\partial K$, that evolve with normal velocity equal to (half) the smallest principal curvature at each point $x\in\Gamma_t$. It is natural to call this {\em minimum curvature flow}, by analogy with mean curvature flow whose normal velocity is the average curvature.

Let $u$ be the {\em arrival time function:} for each $x\in K$, $u(x)$ is the time it takes the evolving front to reach $x$ (we assume the front passes through each point in $K$ exactly once.) Thus $\Gamma_t=\{x\colon u(x)=t\}$ is a level surface of $u$, and the gradient $\nabla u(x)$ is a normal vector at $x$. If $\nabla u(x)\ne0$, the minimal principal curvature of $\Gamma_t$ at $x$ is the smallest value of
\[
-\frac{y^\top \nabla^2 u(x)y}{|\nabla u(x)|}
\]
as $y$ ranges over all tangent unit vectors: $|y|=1$ and $y^\top \nabla u(x)=0$.\footnote{Indeed, if $\gamma\colon\R\to\Gamma_t$ is a smooth geodesic curve with unit speed such that $\gamma(0)=x$ and $\gamma'(0)=y$, then $\nabla u(x)^\top\gamma''(0)+y^\top\nabla^2u(x)y=0$ and $\gamma''(0)=k\frac{\nabla u(x)}{|\nabla u(x)|}$, where $k$ is the curvature of $\gamma$ at $0$.}  On the other hand, since $u(x)$ is the arrival time, the speed of normal displacement at $x$ is $1/|\nabla u(x)|$. We therefore expect $u$ to satisfy
\begin{equation}\label{eq_infyF}
\inf\left\{ -\frac12y^\top \nabla^2 u(x)y\colon \text{$|y|=1$, $y^\top\nabla u(x)=0$}\right\} = 1,
\end{equation}
at least at points where $\nabla u\ne0$. It is not hard to check that this is precisely \eqref{PDE_1}. In the planar case $d=2$, $\Gamma_t$ has only one principle curvature direction, and \eqref{eq_infyF} reduces to the well-known arrival time PDE for the mean curvature flow,
\[
\frac{1}{|\nabla u|} = - \frac12 {\rm div}\left(\frac{\nabla u}{|\nabla u|}\right).
\]

\begin{remark}
Let us outline how the minimum curvature flow can be constructed rigorously using the \emph{level set method} of \citet{osh_set_88,che_gig_got_91,eva_spr_91} and then linked to \eqref{eq_vxD} and \eqref{PDE_1}. Fix a time horizon $T > \max_{x \in K} v(x)$ and consider the geometric parabolic equation
\[
\partial_t U + F(\nabla U, \nabla^2 U) = 0 \text{ in } (0,T] \times \R^d
\]
with an initial condition $U_0(x)$ that is positive on $\interior(K)$, negative on $K^c$, and constant, say equal to $-1$, outside some large compact set. \citet[Theorems~6.7--6.8]{che_gig_got_91} yields existence and uniqueness of a continuous solution $U(t,x)$ of the initial value problem. One now defines the evolving front of the minimum curvature flow at time $t$ to be the boundary of the superlevel set, $\partial\{x \colon U(t,x) > 0\}$. The time $u(x) = \inf\{t \colon U(t,x) < 0\}$ at which the front passes through $x \in K$ can then, under suitable conditions, be shown to be an upper semicontinuous viscosity solution of the elliptic equation \eqref{PDE_1}. If uniqueness holds for this equation, for instance if Theorem~\ref{T_uniqueness} is applicable, it follows that $u$ actually coincides with the value function $v$ in \eqref{eq_vxD}.
\end{remark}

The link to the control problem \eqref{eq_vxD} can be understood as follows. Proceeding informally, we assume a $C^2$ solution $u$ of \eqref{PDE_1} with $u=0$ on $\partial K$ is given. By It\^o's formula,
\begin{equation}\label{eq_intro_1001}
0 = u(X(\tau_K)) = u(x) + \int_0^{\tau_K} \nabla u(X(t))^\top dX(t) + \frac12 \int_0^{\tau_K} \tr(a(t)\nabla^2 u(X(t)))dt
\end{equation}
under any law $\P\in\Pcal_x$, where $a(t)$ is the derivative of the quadratic variation of $X$ and satisfies $\tr(a(t))\equiv1$. The discussion of minimum curvature flow suggests that optimally, $X$ should fluctuate tangentially to the level surfaces of $u$, that is, $a(t)\nabla u(X(t))\equiv0$. Then, due to the definition \eqref{MCF_FpM} of $F$ and since $u$ solves \eqref{PDE_1},
\begin{equation}\label{eq_intro_11}
\frac12\tr(a(t)\nabla^2 u(X(t))) \le -F(\nabla u(X(t)),\nabla^2u(X(t))) = -1.
\end{equation}
Combining \eqref{eq_intro_1001} and \eqref{eq_intro_11} leads to
\[
0 = u(x) + \frac12 \int_0^{\tau_K} \tr(a(t)\nabla^2 u(X(t)))dt \le u(x) - \tau_K,
\]
showing that $\tau_K\le u(x)$. If $a(t)$ maximizes the left-hand side of \eqref{eq_intro_11}, we have equality and expect that $u$ coincides with the value function. Still heuristically, this happens when $X$ fluctuates only along the minimal principle curvature directions of the level surfaces of $u$. This minimizes the speed at which $X$ moves ``outwards'' toward $\partial K$, and maximizes the amount of time $X$ spends in $K$.

This discussion suggests that optimally, $X$ lies on the evolving front of the time-reversed minimum curvature flow. More precisely, before exiting $K$, one expects that $X$ satisfies $v(X(t))=v(x)-t$ under some optimal law $\P\in\Pcal_x$, $x\in K$. Theorem~\ref{T_smooth_case} below shows that this is true if $K$ is a polytope and $v$ sufficiently regular. It is however false in general, even if $v$ is smooth; see Example~\ref{E_target_superlevel}. 

In the case where $K$ is not convex, we get a somewhat different flow. Similarly to the positive curvature flow of \cite{koh_ser_06}, it is now the positive part of the minimum principal curvature that determines the speed of the flow.

We now return to our main results, and focus on the case where $K$ is a convex body. Theorems~\ref{T_usc_vs} and~\ref{T_uniqueness} yield upper semicontinuity of the value function $v$ and characterize it as a viscosity solution of \eqref{PDE_1} with zero boundary condition (in the viscosity sense). If $K$ has empty interior we simply apply these results in the affine span of $K$. The following result is a combination of Proposition~\ref{P_quasi_concave} and Lemma~\ref{L_Kvzero} in Section~\ref{S_convex}.

\begin{theorem}
Let $d\ge2$ and suppose $K$ is a convex body. Then the value function $v$ is quasi-concave, vanishes on all faces of $K$ of dimension zero and one, and is strictly positive elsewhere in $K$.
\end{theorem}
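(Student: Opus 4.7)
For quasi-concavity, my plan is a standard splicing argument. Given $x_0, x_1 \in K$ and $\lambda \in (0,1)$, set $x_\lambda = \lambda x_0 + (1-\lambda)x_1$, fix $\epsilon > 0$, and pick $\P_i \in \Pcal_{x_i}$ with $\essinf_{\P_i}\tau_K \geq v(x_i) - \epsilon$. I construct a law in $\Pcal_{x_\lambda}$ by first running a one-dimensional Brownian motion along the chord $[x_0,x_1]$, started at $x_\lambda$, until it hits one of the endpoints, and then concatenating with the corresponding $\P_i$ at that endpoint. The chord lies in $K$ by convexity and the first phase satisfies $\tr\langle X\rangle(t)=t$, so the spliced law lies in $\Pcal_{x_\lambda}$ and its exit time satisfies $\tau_K \geq \min(v(x_0),v(x_1))-\epsilon$ almost surely. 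Sending $\epsilon \downarrow 0$ gives $v(x_\lambda) \geq \min(v(x_0),v(x_1))$, i.e., quasi-concavity. The only technical point is measurable concatenation on the raw filtration, which is standard.

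For the vanishing claim, the zero-dimensional (extreme-point) case is immediate: if $x$ is extreme and some $\P \in \Pcal_x$ had $T := \essinf_\P \tau_K > 0$, then $X(t) \in K$ almost surely for $t \leq T$; the law of $X(t)$ is a probability measure on $K$ with barycenter $x$, so extremality forces it to be $\delta_x$. Hence $X \equiv x$ on $[0,T]$, contradicting $\tr\langle X\rangle(T) = T$. For a one-dimensional face $F$ with $x$ in its relative interior, I pick a supporting affine functional $\ell$ with $\ell \geq 0$ on $K$ and $\ell(x)=0$; then $\ell(X)$ is a nonnegative continuous martingale starting at $0$, hence identically zero, so $X$ stays in the exposed face $K \cap \{\ell=0\}$. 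Iterating this reduction inside successively smaller exposed faces ultimately confines $X$ to $F$ itself, where $X$ becomes a one-dimensional Brownian motion on a segment, whose exit time has essential infimum zero. I expect this iterated-exposedness step to be the main obstacle, since not every face of a convex body is exposed; the intention is to induct on the dimension of the enveloping exposed face, relying on the classical fact that every face of a convex body lies inside some exposed face together with the strict dimension decrease at each step.

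For strict positivity, let $x$ lie in the relative interior of a face $F$ with $k := \dim F \geq 2$ (allowing $F = K$). Choose $r > 0$ with $B := \{y \in \aff(F) : |y-x| \leq r\} \subset F$. Within $\aff(F) \cong \R^k$, I construct the ``spiraling'' martingale whose infinitesimal covariance is the normalized orthogonal projection onto the tangent space to the sphere of radius $|X(t)-x|$ centered at $x$. A direct It\^o computation yields $|X(t)-x|^2 = t$ almost surely, so $X$ remains in $B$ for $t \leq r^2$ and exits at the deterministic time $r^2$. Extending $X$ constantly in the directions orthogonal to $\aff(F)$ gives an element of $\Pcal_x$, whence $v(x) \geq r^2 > 0$. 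The SDE is singular at $X = x$, but the construction can be made rigorous through the skew-product representation into the deterministic radial part $|X(t)-x| = \sqrt t$ and a time-changed spherical Brownian motion in the angular coordinate.
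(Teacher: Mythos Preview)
Your argument is correct and tracks the paper's closely, with a few differences in packaging. For quasi-concavity, the paper invokes its dynamic programming principle (Proposition~\ref{P_v_proporties}\ref{P_v_proporties_3}) at the hitting time of $\{x_0,x_1\}$ by the one-dimensional Brownian motion along the chord, rather than concatenating by hand; since the DPP itself is proved by concatenation, the two are equivalent, though the DPP route spares you the measurable-pasting bookkeeping. For the vanishing claim, the paper first isolates a lemma (Lemma~\ref{L_FtauK}) stating that $\tau_K=\tau_{F_x}$ under any $\P\in\Pcal_x$, proved by exactly the iterated supporting-halfspace argument you outline, and then applies it uniformly to both the zero- and one-dimensional cases; your barycenter argument for extreme points is a clean shortcut that bypasses this lemma in dimension zero. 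For strict positivity, the paper's spiraling martingale (Example~\ref{Ex:190511}) moves in only two of the $k$ coordinates rather than tangentially in all $k-1$ directions, and handles the singularity at the center by a weak-limit argument (Lemma~\ref{L_weaksolution}) rather than a skew-product decomposition; either works, and the two-coordinate version avoids having to discuss spherical Brownian motion.
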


In particular, if $K$ is strictly convex, then all its boundary faces have dimension zero, and $v$ vanishes everywhere on $\partial K$. Because of upper semicontinuity, this implies that it is continuous at $\partial K$. In fact, Theorem~\ref{T_polytope} below shows that $v$ is continuous everywhere in this case.

However, many convex bodies $K$ have boundary faces of higher dimension. In this case $v$ does \emph{not} vanish everywhere on $\partial K$. This includes the standard $d$-simplex appearing in our motivating financial application. Additionally, and more subtly, there are convex bodies for which the value function is actually {\em discontinuous}. This is because in dimension $d\ge4$, there are convex bodies that admit boundary points $x_n$, all contained in $1$-dimensional boundary faces, whose limit $\bar x=\lim_n x_n$ lies in the relative interior of a $2$-dimensional boundary face; see Example~\ref{E_v_dicontinuous}. For such points, $v(x_n)=0$ but $v(\bar x)>0$, so continuity fails. This is in sharp contrast to the more familiar case of mean curvature flow, where the arrival time function is continuous for any convex initial surface; see \citet[Theorem~7.4]{eva_spr_91} and \citet[Theorem~5.5]{eva_spr_92}.

We prove continuity under the following regularity condition on the geometry of $K$. We require that the {\em $k$-skeletons}, defined by
\begin{equation}\label{F1Fd}
\text{$\Fcal_k = $ union of all faces of $K$ of dimension at most $k$,}
\end{equation}
be closed for $k=1,\ldots,d$ (but not for $k=0$, thus the set of extreme points need not be closed.) This condition is a weakening of a notion from convex geometry called {\em stability}, which is equivalent to all the $k$-skeletons being closed, including the $0$-skeleton; see e.g.\ \cite{pap_77} and \citet[p.~78]{sch_14}. Actually the $d$-, $(d-1)$- and $(d-2)$-skeletons of a convex body are always closed, so this does not have to be assumed separately; see Lemma~\ref{L_skeletons_closed}.

The upshot is the following result, which is applicable in a number of interesting situations. In particular, it covers all convex bodies in $\R^3$, all polytopes in arbitrary dimension, and all convex bodies whose boundary faces all have dimension zero or one. It is a rewording of Theorem~\ref{T_190603} in Section~\ref{S_convex}, and is proved using probabilistic arguments based on the control formulation \eqref{eq_vxD}.

\begin{theorem}\label{T_polytope}
Let $d\ge2$ and suppose $K$ is a convex body with $\Fcal_k$ closed for $1\le k\le d-3$. Then the value function $v$ is continuous on $K$.
\end{theorem}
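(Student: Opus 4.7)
Since Theorem~\ref{T_usc_vs} already provides upper semicontinuity of $v$, the task reduces to establishing lower semicontinuity: for every $x\in K$ and every sequence $x_n\to x$ in $K$, one must show $\liminf_n v(x_n)\ge v(x)$. Points where $v(x)=0$ satisfy the inequality trivially, so by Proposition~\ref{P_quasi_concave} and Lemma~\ref{L_Kvzero} we may assume that $x\in\ri(F)$ for some face $F$ of $K$ with $k:=\dim F\ge 2$.

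The first step is purely geometric. Combining the standing hypothesis that $\Fcal_j$ is closed for $1\le j\le d-3$ with Lemma~\ref{L_skeletons_closed}, which supplies closure of $\Fcal_{d-2}$ and $\Fcal_{d-1}$ automatically, every skeleton $\Fcal_j$ with $1\le j\le d-1$ is closed. Because $x\in\ri(F)$ with $k\ge 2$ lies outside the closed set $\Fcal_{k-1}$, the sequence $x_n$ avoids $\Fcal_{k-1}$ for all large $n$, and each such $x_n$ sits in a face $F_n$ of $K$ of dimension at least $k$. This rules out the discontinuity mechanism of Example~\ref{E_v_dicontinuous}.

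The main step is a probabilistic transport. Fix $\varepsilon>0$ and a law $\P\in\Pcal_x$ with $\essinf_\P \tau_K\ge v(x)-\varepsilon$. For $\lambda\in(0,1)$, the time-changed scaling $Y(t)=\lambda\bigl(X(t/\lambda^2)-x\bigr)+x$ is a martingale in $\Pcal_x$ whose trajectories are confined to the shrunken body $\lambda(K-x)+x\subset K$ and whose exit time from $K$ is at least $\lambda^2(v(x)-\varepsilon)$. When $x\in\interior(K)$, the shrunken body has uniform clearance from $\partial K$, so the translated process $Z(t)=Y(t)+(x_n-x)$ remains in $K$ once $|x_n-x|$ is small relative to this clearance, and its law lies in $\Pcal_{x_n}$ with the same exit-time bound. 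Sending $n\to\infty$, then $\lambda\to 1$ and $\varepsilon\to 0$, yields $\liminf_n v(x_n)\ge v(x)$ at every interior point.

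The principal obstacle is the case $x\in\partial K$ with $k<d$: there, the contraction $\lambda(K-x)+x$ no longer has clearance from $\partial K$ in directions \emph{tangent} to $\aff F$, so the naive translation can push $Y$ out of $K$. I would proceed by induction on the face dimension $k$. The base case $k=2$, which already covers all convex bodies in $\R^3$ where the closure hypothesis is vacuous, can be executed inside $\aff F$, since $F$ is then a two-dimensional convex body whose skeleton structure is trivially closed, and the contraction-plus-translation argument applied to $F$ delivers near-optimal martingales supported in $F$ itself. For the inductive step, the geometric statement of paragraph two provides, for each $n$, a face $F_n$ containing $x_n$ whose dimension is at least $k$; one would then glue a short transversal martingale excursion into $\interior(K)$ onto a tangential martingale obtained from the inductive hypothesis applied to a lower-dimensional face structure, using quasi-concavity of $v$ to bound the loss of exit time in the limit. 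Making this gluing compatible with the normalization $\tr\langle X\rangle(t)=t$, and uniform across sequences approaching $x$ from distinct face types, is the technical heart of the argument.
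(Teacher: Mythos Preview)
Your interior argument via scaling and translation is correct and clean: once $x\in\interior(K)$, the contraction $\lambda(K-x)+x$ sits compactly in $K$, and the translate $Z=Y+(x_n-x)$ yields $v(x_n)\ge\lambda^2(v(x)-\varepsilon)$ for large $n$. This is a different route from the paper for interior points, and arguably more direct there.

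The boundary case, however, is where the theorem lives, and your sketch has a genuine gap. Two concrete problems. First, your base case $k=2$ only establishes continuity of $v|_F$ \emph{within} the face: the contraction-plus-translation executed in $\aff F$ produces candidate laws in $\Pcal_{x_n}$ only when $x_n\in F$. But the approximating sequence can come from $\interior(K)$ or from a different higher-dimensional face $F_n$, and for those $x_n$ you have produced no candidate law at all. Second, the inductive step---``glue a short transversal excursion onto a tangential martingale obtained from the inductive hypothesis''---is not specified. The inductive hypothesis is a continuity statement, not a martingale construction; it is unclear what the tangential martingale is, at what time the gluing occurs, or how quasi-concavity controls the loss. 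A martingale starting at $x_n\in\interior(K)$ cannot be steered to hit the measure-zero set $F$ in any controlled way, so the intuitive picture does not obviously translate into an argument.

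The paper's mechanism is different and worth knowing. Fix $\bar x\in\ri(F)$ with $\dim F=k$, take an optimal $\P\in\Pcal_{\bar x}$, and use Lemma~\ref{L_FtauK} to confine $X$ to $F$ until $\tau_K$. For each $x_n$ (which by your own geometric observation lies in a face $F_{x_n}$ of dimension $\ge k$), choose an orthogonal matrix $Q_n\to I$ so that the affine \emph{isometry} $\Phi_n(y)=Q_n(y-\bar x)+x_n$ maps a $\dim F_{x_n}$-dimensional flat through $F$ onto $\aff F_{x_n}$; existence of such $Q_n$ uses compactness of the Grassmannian (Lemma~\ref{L_cont_induction}, Case~2). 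The image process $Y=\Phi_n(X)$ has law in $\Pcal_{x_n}$ and evolves inside $\aff F_{x_n}$. It may exit $\ri(F_{x_n})$ before the original exits $F$ (the faces have different shapes, so direct containment fails), but when it does it lands in $\rbd(F_{x_n})\subset\Fcal_{k-1}$. At that stopping time one compares $v(X(\theta))$ and $v(Y(\theta))$ using the modulus on $\cl(\Fcal_{k-1})$ supplied by the inductive hypothesis together with Lemma~\ref{L_unif_usc}, and two applications of the dynamic programming principle (Lemma~\ref{L_modulus_of_v}) then give $v(\bar x)\le v(x_n)+\omega(c|Q_n-I|+|\bar x-x_n|)$. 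The rotation is what your translation is missing: a pure shift by $x_n-\bar x$ moves the process into a flat parallel to $\aff F$, which near $\partial K$ need not lie in $K$, whereas rotating into $\aff F_{x_n}$ keeps the DPP machinery available.
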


The fact that $v$ vanishes only at the $1$-skeleton $\Fcal_1$ (the extreme points and lines), but not elsewhere in $K$, suggests that \eqref{PDE_1} describes a geometric flow also of $\Fcal_1$, not only of $\partial K$. This flow of $\Fcal_1$ is the {\em codimension-$(d-1)$ mean curvature flow} of \cite{amb_son_96}, although here the initial set $\Fcal_1$ need not be a one-dimensional curve.

To spell this out, for any symmetric matrix $A$ and eigenvector $p$ of $A$, let $\lambda_{\rm min}(A,p)$ denote the smallest eigenvalue of $A$ corresponding to an eigenvector orthogonal to $p$. Then \eqref{eq_infyF} states that
\begin{equation}\label{eq_PDE_lambda_intro}
\lambda_{\rm min}\left(-\frac12 P_{\nabla u(x)} \nabla^2u(x) P_{\nabla u(x)}, \nabla u(x)\right) = 1,
\end{equation}
where
\[
P_{\nabla u} = I - \frac{\nabla u\nabla u^\top}{|\nabla u|^2}.
\]
Modulo sign conventions and the factor $1/2$, the left-hand side of \eqref{eq_PDE_lambda_intro} is precisely the operator used by \cite{amb_son_96}. In fact, the function $V(t,x)=t-v(x)$, where $v$ is the value function in \eqref{eq_vxD}, solves their parabolic equation on $K$ with initial condition $V(0,x)=-v(x)$, whose zero set (in $K$) is the $1$-skeleton $\Fcal_1$. This suggests interpreting the minimum curvature flow of $\partial K$ as a codimension-$(d-1)$ mean curvature flow of $\Fcal_1$.

This perspective is particularly compelling when $K$ is a polytope: $\Fcal_1$ is then a finite union of closed line segments and thus one-dimensional, albeit with ``branching''. In this case, the one-dimensional initial contour instantly develops higher-dimensional features as it evolves under the flow, and eventually becomes a closed hypersurface. This is illustrated schematically in Figure~\ref{F_fattening}, where $K$ is the standard $3$-simplex.

\begin{figure}[h]
\centering
\begin{subfigure}[t]{.27\textwidth}
\centering
\includegraphics[width=0.95\linewidth]{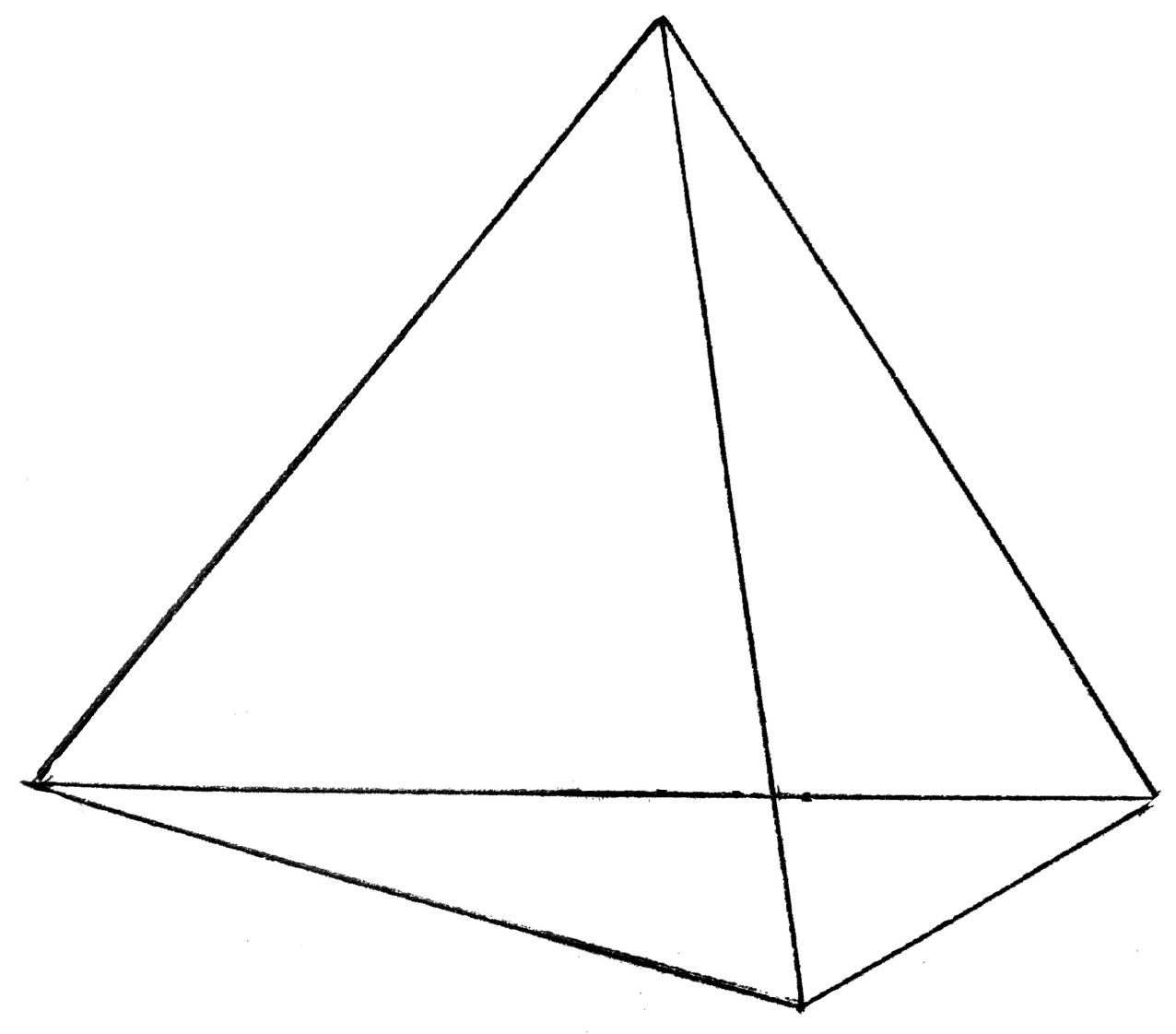}
\end{subfigure}
\hspace{0.03\textwidth}
\begin{subfigure}[t]{.27\textwidth}
\centering
\includegraphics[width=\linewidth]{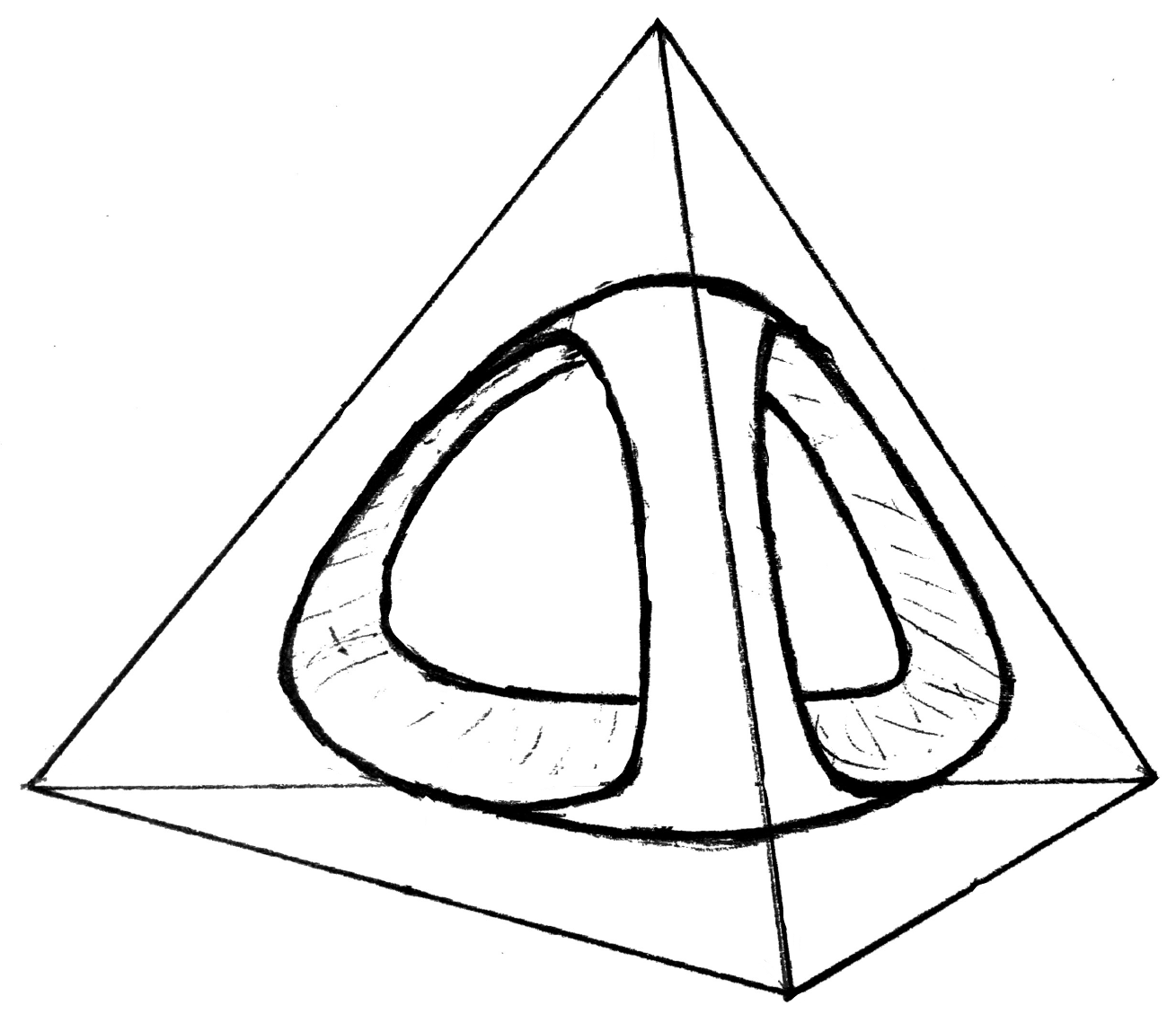}
\end{subfigure}
\hspace{0.03\textwidth}
\begin{subfigure}[t]{.27\textwidth}
\centering
\includegraphics[width=\linewidth]{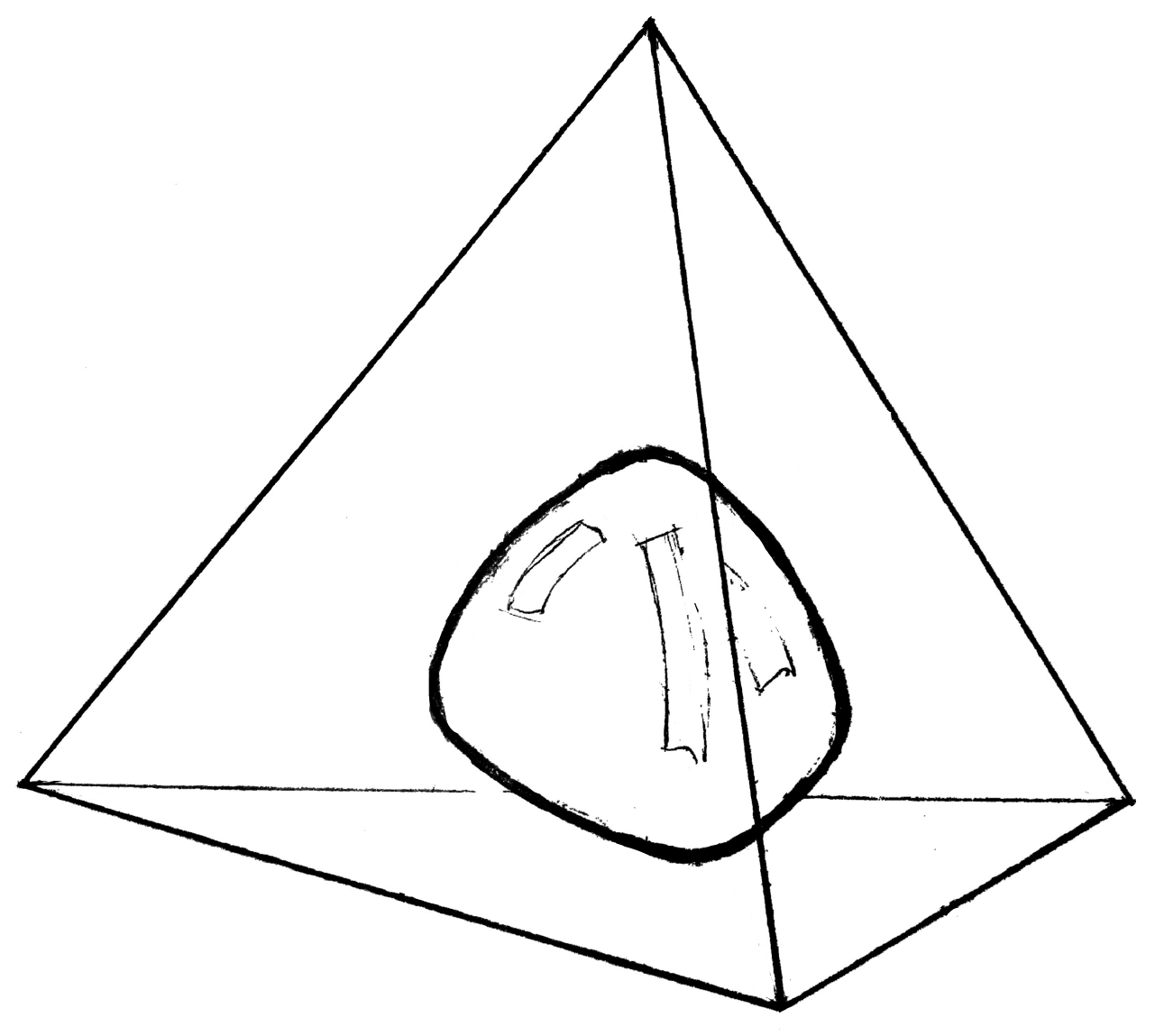}
\end{subfigure}
\caption{Schematic illustration of the minimum curvature flow of the $3$-simplex, regarded as codimension-2 mean curvature flow of its 1-skeleton as initial contour. In the second and third panel, the 1-skeleton is still shown for reference.}
\label{F_fattening}
\end{figure}

Returning to the minimum curvature flow as a flow of surfaces starting from $\partial K$, we see that points inside two- and higher dimensional faces remain stationary for some period of time. This behavior is analogous to the behavior of mean curvature flow of {\em non-convex} contours; see \citet[Figure~4]{koh_ser_06} for an illustration. We thank R.~Kohn for pointing this out to us. A similar phenomenon occurs for the Gauss curvature flow; see \citet{ham_94,cho_eva_ish_99,das_lee_04}.

We do not have much information about the regularity of the value function $v$ in general, beyond the continuity assertion in Theorem~\ref{T_polytope} and the counterexample in Example~\ref{E_v_dicontinuous}. An exception is the planar case $d=2$, where we recover the standard mean curvature flow. In this case, for $K$ strongly convex with smooth boundary, \cite{koh_ser_06} proved that $v$ is $C^3$ (see also \cite{hui_93} for an earlier proof that $v$ is $C^2$). In general, let us {\em assume} that $v$ is $C^2$ inside each face of $K$, with just one critical point. If in addition $K$ has at most countably many faces, it is then possible to construct optimal solutions of \eqref{eq_vxD} where the intuitive notion that $X$ should fluctuate tangentially to, and remain on, the level surfaces of $v$ becomes rigorous.

\begin{theorem}\label{T_smooth_case}
Let $d\ge2$ and let $K$ be a convex body with at most countably many faces. Assume the value function $v$ lies in $C^2(K)$. Assume also that in each face $F$ of dimension at least two, either $v$ has no critical point, or $v$ has one single critical point which additionally is a maximum. Then for every $\bar x\in K$ there is an optimal solution $\P\in\Pcal_{\bar x}$ under which $v(X(t))=v(\bar x)-t$ for all $t<\tau_K$. In particular,
\[
\int_0^t \nabla v(X(s))^\top dX(s) = 0, \quad t<\tau_K,
\]
and $X$ lies on the evolving front of the time-reversed minimum curvature flow in the sense that $X(t)\in\Gamma_{v(\bar x)-t}$, where $\Gamma_t=\{x\colon v(x)=t\}$, until it leaves $K$.
\end{theorem}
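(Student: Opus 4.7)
My approach is to construct $\P$ by pasting together, along the face lattice of $K$, stochastic differential equations whose volatility at each point is a tangentially-supported optimizer of \eqref{MCF_FpM}. Begin with a reduction: if $v(\bar x)=0$ then $\bar x\in\Fcal_1\subset\partial K$ by the vanishing result from Section~\ref{S_convex}, so $\P=\delta_{\bar x}$ gives $\tau_K=0$ and the conclusion is vacuous. Assume therefore $v(\bar x)>0$, so that $\bar x$ lies in the relative interior of a unique face $F_0$ with $\dim F_0\ge 2$, where by hypothesis $v|_{F_0}$ has at most one critical point, and if so it is a maximum.

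On each face $F$ of dimension $k\ge 2$ I would select measurably a unit vector $y_F(x)\in T_{\aff F}$ (the linear subspace parallel to $\aff F$) satisfying
\[
y_F(x)^\top\nabla v(x)=0 \qquad\text{and}\qquad y_F(x)^\top\nabla^2 v(x)\,y_F(x)=-2.
\]
When the tangential projection of $\nabla v(x)$ onto $T_{\aff F}$ is nonzero, the constraint $y\perp\nabla v$ cuts out a codimension-one subspace $H_x\subset T_{\aff F}$, and I pick $y_F(x)$ among the top eigenvectors of $\nabla^2 v(x)|_{H_x}$ (eigenvalue equal to $-2$ by \eqref{PDE_1} restricted to $T_{\aff F}$); at the maximum of $v|_F$, where the tangential projection of $\nabla v$ vanishes, the constraint is vacuous and $y_F(x)$ is a unit eigenvector of $\nabla^2 v(x)|_{T_{\aff F}}$ with eigenvalue $-2$. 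The weak solution of $dX=y_F(X)\,dW$ on $\ri F$, with $W$ a scalar Brownian motion and $X(0)=\bar x$, then satisfies $\tr\langle X\rangle(t)=t$ and, by It\^o's formula,
\[
dv(X(t))=y_F(X)^\top\nabla v(X)\,dW+\tfrac12\,y_F(X)^\top\nabla^2 v(X)\,y_F(X)\,dt=-dt.
\]

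To continue from face to face, set $\tau_1=\inf\{t\ge 0:X(t)\in\rbd F_0\}$ and let $F_1$ be the unique face of $K$ containing $X(\tau_1)$ in its relative interior. If $F_1\subset\Fcal_1$, then $v(X(\tau_1))=0=v(\bar x)-\tau_1$ forces $\tau_1=v(\bar x)=\tau_K$ and we are done; otherwise $\dim F_1\ge 2$, and the construction restarts on $F_1$ via the strong Markov property at time $\tau_1$, using $y_{F_1}$ in place of $y_{F_0}$. Since $K$ has at most countably many faces, the local selections $\{y_F\}$ can be glued into a single measurable map, and the recursion terminates in deterministic total time $v(\bar x)$ because $v(X(t))$ decreases at unit rate until $X$ hits $\Fcal_1$. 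Optimality of the resulting $\P$ follows because $\tau_K=v(\bar x)$ matches the upper bound contained in Theorem~\ref{T_usc_vs}.

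The main obstacle I anticipate is verifying that the face-tangential optimizer really realizes the infimum in \eqref{MCF_FpM} on $\ri F$: a priori the $\R^d$-valued PDE permits minimizers $a$ with components normal to $\aff F$, and one must argue these contribute nothing extra on the face. The cleanest route is to prove that $v|_F$ solves the $k$-dimensional analogue of \eqref{PDE_1} on $F$ regarded as a convex body in $\aff F$, either by a PDE argument using $v\in C^2(K)$ or by showing that the restricted control problem on $\aff F$ has the same value as the ambient one. Secondary points — measurable selection of $y_F$ (made tractable by the single-maximum hypothesis), well-posedness of the degenerate rank-one SDE, and pasting that preserves the continuous-martingale property and the trace normalization across the countably many face transitions — appear standard in comparison.
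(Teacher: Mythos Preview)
Your overall architecture—construct an optimal diffusion on each face that drives $v(X(t))$ down at unit rate, then paste along the face lattice—is exactly the paper's. Your identification of the ``main obstacle'' is also on target: one must know that $v|_{\ri F}$ solves the $\dim F$-dimensional version of \eqref{PDE_1}, and your proposed route via the restricted control problem is how the paper implicitly handles it (Lemma~\ref{L_FtauK} forces $X^{\tau_K}$ into $F_{\bar x}$, so the ambient and restricted value functions coincide on $\ri F$).

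The genuine gap is the step you label ``standard'': existence of a weak solution to $dX=y_F(X)\,dW$. The eigenvector field $y_F$ is only measurable, not continuous—it jumps wherever the eigenspace for the eigenvalue $-2$ of the projected Hessian has dimension $\ge2$, and possibly at the critical point. Weak existence for degenerate SDEs with merely measurable coefficients is not covered by the usual Stroock--Varadhan or Skorokhod results. The paper confronts exactly this issue in Lemma~\ref{L_SDE_existence}: rather than fixing a single eigenvector, it works with the continuous matrix $H(x)=\tfrac12 P_{\nabla v}\nabla^2 v\,P_{\nabla v}+I$, sets $a(x)=(I-H(x)^+H(x))/(d-\rk H(x))$, mollifies $a$ to $a_n$ (which preserves $\tr(a_n)\equiv1$), solves the mollified SDEs, and passes to a weak limit using compactness of $\Pcal_{\bar x}$. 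The point is that although $a$ is discontinuous, the linear constraint $\int H(X)\,d\langle X\rangle=0$ involves only the \emph{continuous} function $H$ and therefore survives the limit; from $H(X)a=0$ one then reads off $\nabla v(X)^\top a=0$ and $\tr(a\nabla^2 v(X))=-2$, hence $dv(X)=-dt$. Your rank-one choice $a=y_Fy_F^\top$ can be pushed through the same mollification argument (mollify $a$, not $y_F$, so that unit trace is preserved), but this has to be carried out explicitly—it is the heart of the construction on each face, not a routine detail. The critical point is then handled separately by approximation and a closedness lemma (Lemma~\ref{L_CF_closed}).

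A small correction: $\delta_{\bar x}\notin\Pcal_{\bar x}$, since the constant path has $\tr\langle X\rangle\equiv0\ne t$. For $\bar x\in\Fcal_1$ take instead the law of $\bar x+d^{-1/2}W$ with $W$ a $d$-dimensional Brownian motion; Lemma~\ref{L_FtauK} then gives $\tau_K=\tau_{F_{\bar x}}=0$ a.s., and the conclusion is vacuous as you intended.
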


The meaning of $C^2(K)$ and the notion of a critical point is explained in Section~\ref{S_v_smooth}, where also the proof is given. The basic idea is to observe that $v$ satisfies \eqref{eq_PDE_lambda_intro} classically at non-critical points. In particular, by definition of eigenvalue, the matrix
\[
H(x) = \frac12 P_{\nabla v(x)} \nabla^2v(x) P_{\nabla v(x)} + I
\]
is singular at all such points, so is of rank at most $d-1$. This can be used to construct a martingale law $\P\in\Pcal_{\bar x}$ under which $H(X(t))d\langle X\rangle(t)=0$. This turns out to imply $\nabla v(X(t))^\top dX(t)=0$ and then $dv(X(t))=-dt$. This is essentially the desired conclusion. Some effort is needed to construct $\P$, basically because the Moore--Penrose inverse $H(x)^+$ of $H(x)$ is no longer continuous in $x$. Moreover, $X$ is obtained by constructing martingales on each face of $K$ separately and then ``gluing'' these martingales together. This introduces some technical hurdles, and explains why the proof is somewhat lengthy.

As an illustration, and for later use, we give a simple example where the value function $v$ is known explicitly and happens to be smooth on $K$; see also \cite{str_71} and \cite{FKR:16}.

\begin{example}  \label{Ex:190511}
Let $d\ge2$ and let $K=\{x\in\R^d\colon |x|\le r\}$ be the centered closed ball of radius $r>0$. In this case, $v(x)=r^2-|x|^2$ for all $x\in K$. To see this, choose any $x \in K$ and $\P \in \Pcal_x$. We have
\[
|X(t)|^2 = |x|^2 + 2\int_0^t X(s)^\top dX(s) + t, \quad t\ge0.
\]
Evaluating at $t=\tau_K\wedge n$, taking expectations, and letting $n\to\infty$, one obtains $\E[\tau_K]=r^2-|x|^2$. In particular, this shows that $X$ escapes from any bounded set in finite time, $\P$-a.s. Moreover, since of course $\essi{\P} \tau_K\le\E[\tau_K]$, we get $v(x) \le |x|^2-r^2$. In fact, we have equality. Indeed, let $\P\in\Pcal_x$ be the law under which $X_3,\ldots,X_d$ are constant and $(X_1,X_2)^\top$ satisfies
\[
d\begin{pmatrix}X_1(t)\\X_2(t)\end{pmatrix} = \frac{1}{\sqrt{X_1(t)^2 + X_2(t)^2}}  \begin{pmatrix}X_2(t)\\-X_1(t)\end{pmatrix} dW(t),
\]
where $W$ denotes a one-dimensional Brownian motion. Such a probability measure $\P$ always exists, even if $x = 0$; see Lemma~\ref{L_weaksolution}. An application of It\^o's formula now yields $\tau_K = r^2 - |x|^2$, $\P_x$-a.s. We deduce that $v(x)=r^2-|x|^2$ for all $x\in K$. Furthermore, it is straightforward to verify that $v$ satisfies \eqref{PDE_1} with boundary condition $v = 0$ on $\partial K$.
\end{example}

The reasoning in Example~\ref{Ex:190511} directly yields the following upper bound on $v$.

\begin{lemma}\label{L_E[theta]}
If $K$ is compact, $x\in K$, and $\P\in\Pcal_x$, then $\E[\tau_K]\le r^2$, where $r$ is the radius of the smallest ball containing $K$. In particular, $\tau_K<\infty$, $\P$-a.s., and the value function defined in \eqref{eq_vxD} satisfies $v(x)\le r^2$ for all $x$.
\end{lemma}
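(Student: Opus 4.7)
The plan is to mimic the computation in Example~\ref{Ex:190511}, but centered at the midpoint of the smallest enclosing ball rather than at the origin, and then bound the essential infimum by the expectation.

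First I would let $c\in\R^d$ denote the center of the smallest ball containing $K$, so that $|y-c|\le r$ for every $y\in K$. Since $X(t)\in K$ for $t<\tau_K$ and $X$ is continuous, also $|X(\tau_K)-c|\le r$. Applying It\^o's formula to the $C^2$ function $y\mapsto|y-c|^2$ and using that $\tr\langle X\rangle(t)\equiv t$ under any $\P\in\Pcal_x$, I obtain
\[
|X(t)-c|^2 = |x-c|^2 + 2\int_0^t (X(s)-c)^\top dX(s) + t.
\]
Stop at $\tau_K\wedge n$. On this interval the integrand $X(s)-c$ is bounded by $r$, so the stopped stochastic integral is a true martingale (its quadratic variation is bounded by $r^2 n$). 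Taking expectations gives
\[
\E\bigl[|X(\tau_K\wedge n)-c|^2\bigr] = |x-c|^2 + \E[\tau_K\wedge n],
\]
and since the left-hand side is at most $r^2$, we conclude $\E[\tau_K\wedge n]\le r^2-|x-c|^2\le r^2$.

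Letting $n\to\infty$ and invoking monotone convergence yields $\E[\tau_K]\le r^2$, which immediately implies $\tau_K<\infty$ $\P$-a.s. For the value function bound, the essential infimum of any nonnegative random variable is dominated by its expectation, so $\essinf_\P\tau_K\le\E[\tau_K]\le r^2$; taking the supremum over $\P\in\Pcal_x$ gives $v(x)\le r^2$. There is no real obstacle here; the only minor care needed is to justify that the stopped stochastic integral is a true (not merely local) martingale so that taking expectations is legitimate, which is handled by the boundedness of $X-c$ on $[0,\tau_K\wedge n]$.
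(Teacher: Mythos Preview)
Your proof is correct and follows exactly the approach the paper intends: the paper simply states that ``the reasoning in Example~\ref{Ex:190511} directly yields'' this lemma, and your argument is precisely that reasoning, translated to the center $c$ of the smallest enclosing ball. The only addition you make beyond Example~\ref{Ex:190511} is the explicit justification that the stopped stochastic integral is a true martingale, which is a welcome clarification.
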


The rest of the paper is organized as follows. Section~\ref{S_v_DPP} develops a number of general properties of the value function, as well as illustrative examples. In particular, a dynamic programming principle is proved. In Section~\ref{S_viscosity_soln} we prove Theorem~\ref{T_usc_vs} that the value function is a viscosity solution. In Section~\ref{S_comparison} we prove Theorem~\ref{T_comparison}, a comparison principle for viscosity solutions of \eqref{PDE_1}, and use it to deduce Theorem~\ref{T_uniqueness}. In Section~\ref{S_convex} we focus on the case where $K$ is a convex body, and establish in particular Theorem~\ref{T_190603} on continuity of the value function. In Section~\ref{S_v_smooth} we prove Theorem~\ref{T_smooth_case}.

We end with a technical remark regarding filtrations and stopping times. Whenever $X$ is said to be a martingale, this is understood with respect to its own filtration $\F^X=(\Fcal^X_t)_{t\ge0}$ where $\Fcal^X_t=\sigma(X_s, s\le t)$. In this case, $X$ is also a martingale for the right-continuous filtration $\F^X_+$ consisting of the $\sigma$-algebras $\bigcap_{u>t}\Fcal^X_u$, and similarly for the filtrations obtained by augmenting $\F^X$ and $\F^X_+$ with nullsets. In particular, results such as the stopping theorem are applicable with $\tau_K$ in \eqref{eq: tauK}, which is an $\F^X_+$-stopping time but not an $\F^X$-stopping time.

\section{The value function and dynamic programming}\label{S_v_DPP}

The purpose of this section is to establish a number of properties of the value function, in particular a dynamic programming principle. Throughout this section, $K$ is compact but not necessarily convex.

\begin{lemma}\label{L_usc_tauK}
The maps $\omega \mapsto \tau_K(\omega)$ from $\Omega$ to $[0,\infty]$ and $\P\mapsto\essi{\P}  \tau_K$ from $\Pcal(\Omega)$ to $[0,\infty]$ are upper semicontinuous, where $\tau_K$ is the first exit time of $K$, given in \eqref{eq: tauK}.
\end{lemma}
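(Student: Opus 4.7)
\medskip

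\noindent\textbf{Proof plan.} The plan is to prove the two upper semicontinuity claims in sequence, deriving the second from the first via the Portmanteau theorem.

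\medskip

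For the first claim, I would show that for every $c\in(0,\infty)$, the sublevel set $\{\omega\in\Omega\colon \tau_K(\omega)<c\}$ is open in the locally uniform topology. Fix such a $c$ and an $\omega$ with $\tau_K(\omega)<c$. By definition of the infimum in \eqref{eq: tauK}, there exists some $s\in[\tau_K(\omega),c)$ with $\omega(s)\notin K$. Since $K$ is closed, $\delta:=d(\omega(s),K)>0$. Any $\omega'\in\Omega$ satisfying $\sup_{t\le c}|\omega'(t)-\omega(t)|<\delta$ then obeys $\omega'(s)\notin K$, whence $\tau_K(\omega')\le s<c$. This exhibits an open neighborhood of $\omega$ in the sublevel set, so $\tau_K$ is upper semicontinuous. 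The case $\tau_K(\omega)=0$ and the boundary values at $0$ and $\infty$ require no separate treatment because the argument works uniformly for all $c>0$.

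\medskip

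For the second claim, suppose $\P_n\to\P$ weakly in $\Pcal(\Omega)$ and set $L:=\limsup_n\essinf_{\P_n}\tau_K$; we want $\essinf_\P\tau_K\ge L$. Fix $c<L$ and pass to a subsequence $(n_k)$ along which $\essinf_{\P_{n_k}}\tau_K$ converges to $L$. For all sufficiently large $k$, $\essinf_{\P_{n_k}}\tau_K>c$, which by the definition of essential infimum gives $\P_{n_k}(\tau_K<c)=0$. By the first part the set $\{\tau_K<c\}$ is open, so the Portmanteau theorem yields
\[
\P(\tau_K<c)\ \le\ \liminf_k \P_{n_k}(\tau_K<c)\ =\ 0.
\]
Therefore $\essinf_\P\tau_K\ge c$, and letting $c\uparrow L$ concludes the proof.

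\medskip

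I do not anticipate a genuine obstacle; the only subtlety is that upper semicontinuity of $\tau_K$ requires $K$ to be closed (an exit time from an open set would only be lower semicontinuous), and the Portmanteau step requires openness of the sublevel set, which is precisely what the first part delivers. Passing to a subsequence in the essinf argument is essential: working with the full sequence would only give $\P_n(\tau_K<c)=0$ infinitely often, which is insufficient to conclude via $\liminf$.
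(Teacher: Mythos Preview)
Your proof is correct. The first part is essentially the same as the paper's argument: both find a time $s<c$ at which $\omega(s)\notin K$ and use closedness of $K$ together with locally uniform convergence; you phrase it via open sublevel sets, the paper via sequences, but the content is identical.

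For the second part you take a genuinely different route. The paper argues via a Laplace transform: it observes that $f_\lambda(\P)=-\lambda^{-1}\log\E_\P[e^{-\lambda\tau_K}]$ is upper semicontinuous in $\P$ by Portmanteau (since $e^{-\lambda\tau_K}$ is bounded and lower semicontinuous), and then uses the identity $\essinf_\P\tau_K=\inf_{\lambda>0}f_\lambda(\P)$, which is upper semicontinuous as an infimum of upper semicontinuous functions. Your approach instead feeds the open sublevel sets $\{\tau_K<c\}$ from part one directly into Portmanteau. Your argument is more elementary and makes the dependence on part one transparent; the paper's is slicker and avoids the sequential bookkeeping. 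Both are perfectly valid.

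One minor remark: your final comment that passing to a subsequence is ``essential'' is not quite accurate. If $\essinf_{\P_n}\tau_K>c$ holds for infinitely many $n$, then $\P_n(\tau_K<c)=0$ infinitely often, and this already forces $\liminf_n\P_n(\tau_K<c)=0$, which is all Portmanteau needs. The subsequence does no harm, but it is not required.
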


\begin{proof}
We claim that $\omega\mapsto\tau_K(\omega)$ is upper semicontinuous on $\Omega$. To see this, let $\omega_n,\omega$ satisfy $\tau_K(\omega)<\infty$ and $\omega_n\to\omega$ locally uniformly. Consider $\varepsilon>0$ such that $\omega(\tau_K(\omega)+\varepsilon)\notin K$. Then for all large $n$, we have $\omega_n(\tau_K(\omega)+\varepsilon)\notin K$, and hence $\tau_K(\omega_n)\le\tau_K(\omega)+\varepsilon$. Thus $\limsup_n\tau_K(\omega_n)\le\tau_K(\omega)+\varepsilon$. This proves upper semicontinuity of $\tau_K$ since $\varepsilon>0$ can be chosen arbitrarily small.

Next, for every $\lambda>0$ the Portmanteau theorem yields that the map
\[
\P\mapsto f_\lambda(\P)=-\frac1\lambda\log\E_\P[e^{-\lambda\tau_K}]
\]
from $\Pcal(\Omega)$ to $[0,\infty]$ is upper semicontinuous. Then so is $\P \mapsto \inf_{\lambda>0}f_\lambda(\P)=\essi{\P} \tau_K$, as required. 
 \end{proof}

\begin{proposition}\label{P_v_proporties}
\begin{enumerate}
\item\label{P_v_proporties_1} $\Pcal_x$ is weakly compact for every $x\in\R^d$;
\item\label{P_v_proporties_2} $v$, given in \eqref{eq_vxD}, is upper semicontinuous and there is a measurable map $x\mapsto\P_x$ from $\R^d$ into $\Pcal(\Omega)$ such that $\P_x$ lies in $\Pcal_x$ and is optimal for all $x\in\R^d$;
\item\label{P_v_proporties_3} $v$ satisfies the following dynamic programming principle: for every $x\in\R^d$ and every $\F^X$-stopping time $\theta$,
\[
v(x) = \sup_{\P\in\Pcal_x} \essi{\P} \{ \theta \wedge\tau_K + v(X(\theta))\bm1_{\theta\le\tau_K}\}.\footnote{Note that $\tau_K<\infty$,  $\P$-a.s.\ for every $\P\in\Pcal_x$ due to Lemma~\ref{L_E[theta]}.}
\]
Moreover, the supremum is attained by any optimal $\P\in\Pcal_x$.
\end{enumerate}
\end{proposition}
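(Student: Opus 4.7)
\medskip

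\noindent\textbf{Plan.} The three parts are coupled: weak compactness is needed to get both upper semicontinuity and the existence of optimizers, which then feed the dynamic programming argument through a measurable selection. I would handle them in the order \eqref{P_v_proporties_1}--\eqref{P_v_proporties_3} given in the statement.

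For \eqref{P_v_proporties_1}, the plan is to check tightness and closure separately. Tightness of $\Pcal_x$ follows from the normalization $\tr\langle X\rangle(t)\equiv t$: by the Burkholder--Davis--Gundy inequality each coordinate satisfies $\E_{\P}[\sup_{s\le t}|X_i(s)-x_i|^p]\le C_p t^{p/2}$ with constants independent of $\P\in\Pcal_x$, which is more than enough for Kolmogorov's criterion on $C(\R_+,\R^d)$. For closedness under weak convergence, I would reformulate membership in $\Pcal_x$ as: $\P(X(0)=x)=1$, and both $X$ and $|X|^2-dt\cdot$... (use $|X(t)|^2-t$ is a $\P$-martingale, which by Doob--Meyer is equivalent to $\tr\langle X\rangle(t)\equiv t$ given that $X$ is already a martingale). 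These properties read as $\E_\P[(M(t)-M(s))\,h]=0$ for every $s\le t$, every bounded continuous $\Fcal^X_s$-measurable $h$, and $M\in\{X,\ |X|^2-t\}$. The uniform second-moment bound from tightness provides the uniform integrability needed to pass $\P_n\to\P$ inside the expectation.

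For \eqref{P_v_proporties_2}, I combine (i) with Lemma~\ref{L_usc_tauK}. Upper semicontinuity of $v$: given $x_n\to x$, pick $\P_n\in\Pcal_{x_n}$ with $\essinf_{\P_n}\tau_K\ge v(x_n)-1/n$; the same moment estimates make $\bigcup_n\Pcal_{x_n}$ tight, and any weak limit point $\P$ of $\{\P_n\}$ lies in $\Pcal_x$ by the argument of (i), so Lemma~\ref{L_usc_tauK} gives $v(x)\ge\essinf_\P\tau_K\ge\limsup_n v(x_n)$. Existence of an optimizer for each $x$ then follows from weak compactness of $\Pcal_x$ together with upper semicontinuity of $\P\mapsto\essinf_\P\tau_K$. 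For the measurable selection I would check that the graph $\{(x,\P):\P\in\Pcal_x\}$ is closed in $\R^d\times\Pcal(\Omega)$ (same argument as in (i)) and that $\P\mapsto\essinf_\P\tau_K$ is Borel; then the multifunction $x\mapsto\mathrm{argmax}_{\Pcal_x}\essinf_{\fdot}\tau_K$ has nonempty compact values and a measurable graph, and a Kuratowski--Ryll-Nardzewski or Jankov--von~Neumann selection yields a measurable $x\mapsto\P_x$.

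For \eqref{P_v_proporties_3}, both inequalities hinge on regular conditional probabilities at the stopping time $\theta$. For ``$\le$'', I take any $\P\in\Pcal_x$, disintegrate along $\Fcal^X_\theta$, and observe that on $\{\theta\le\tau_K\}$ the conditional law of $X(\theta+\fdot)-X(\theta)+X(\theta)$ given $\Fcal^X_\theta$ belongs to $\Pcal_{X(\theta)}$; hence the $\Fcal^X_\theta$-conditional $\essinf$ of the remaining exit time $(\tau_K-\theta)\bm1_{\theta\le\tau_K}$ is bounded above by $v(X(\theta))\bm1_{\theta\le\tau_K}$. Adding the $\Fcal^X_\theta$-measurable quantity $\theta\wedge\tau_K$ and using $\essinf_\P Y=\essinf_\P\bigl[\essinf_\P(Y\mid\Fcal^X_\theta)\bigr]$ produces the inequality $\essinf_\P\tau_K\le\essinf_\P\{\theta\wedge\tau_K+v(X(\theta))\bm1_{\theta\le\tau_K}\}$, and taking supremum over $\P$ gives $v(x)\le\text{RHS}$. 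For ``$\ge$'', I glue: given $\P\in\Pcal_x$, define $\P'\in\Pcal_x$ that agrees with $\P$ on $\Fcal^X_\theta$ and whose conditional law given $\Fcal^X_\theta$ on $\{\theta\le\tau_K\}$ is the pushforward of the optimal law $\P_{X(\theta)}$ from \eqref{P_v_proporties_2}, suitably shifted; the measurability of $x\mapsto\P_x$ is exactly what makes this kernel a bona fide stochastic kernel. Under $\P'$ the conditional $\essinf$ of $\tau_K$ given $\Fcal^X_\theta$ equals $\theta\wedge\tau_K+v(X(\theta))\bm1_{\theta\le\tau_K}$, and since this expression is $\Fcal^X_\theta$-measurable and $\P'=\P$ on $\Fcal^X_\theta$, its $\essinf$ under $\P'$ equals its $\essinf$ under $\P$. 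Therefore $v(x)\ge\essinf_{\P'}\tau_K=\essinf_\P\{\theta\wedge\tau_K+v(X(\theta))\bm1_{\theta\le\tau_K}\}$, and taking supremum yields $v(x)\ge\text{RHS}$. Finally, the chain $v(x)=\essinf_\P\tau_K\le\essinf_\P\{\theta\wedge\tau_K+v(X(\theta))\bm1_{\theta\le\tau_K}\}\le v(x)$, valid for any optimal $\P$ in \eqref{eq_vxD}, shows that the same $\P$ attains the supremum in the dynamic programming identity. The main obstacle I expect is the gluing/selection step: verifying that the pasted kernel produces a Borel probability measure in $\Pcal_x$ (in particular, that the martingale property and $\tr\langle X\rangle\equiv t$ are preserved across $\theta$) and that the $\essinf$ calculation is compatible with the subtle fact that $\tau_K$ is only an $\F^X_+$-stopping time, not an $\F^X$-stopping time.
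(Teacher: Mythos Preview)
Your proposal is correct and follows essentially the same route as the paper. The only noteworthy difference is in part~\ref{P_v_proporties_2}: the paper streamlines both the upper semicontinuity and the measurable selection by first observing that $\Pcal_x=(x+\fdot)_*\Pcal_0$, so that $v(x)=\sup_{\P\in\Pcal_0}g((x+\fdot)_*\P)$ is a supremum over the \emph{fixed} compact set $\Pcal_0$ of a jointly upper semicontinuous function, and then invokes a single selection theorem (Bertsekas--Shreve, Proposition~7.33) to get both conclusions at once; your direct argument via tightness of near-optimizers and a graph-based selection works too, but is slightly heavier. In part~\ref{P_v_proporties_3} the paper organizes the two halves in the opposite order (first the gluing inequality $v(x)\ge\essinf_\P\{\cdots\}$ for arbitrary $\P$, then equality for optimal $\P$ via the conditional essential infimum), but the underlying mechanics---Galmarino's test, regular conditional distributions, and the $\Fcal^X_\theta$-measurability of $\theta\wedge\tau_K$ and $\bm1_{\theta\le\tau_K}$ despite $\tau_K$ being only an $\F^X_+$-stopping time---are identical to what you outline.
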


\begin{proof}
\ref{P_v_proporties_1}: Consider any $\P\in\Pcal_x$. Fix $s\ge0$ and define $M(t)=|X(t)-X(s)|^2-t+s$ for $t\ge s$. Then $M$ is a $\P$-martingale on $[s,\infty)$ with $\langle M\rangle(t)\le 4\int_s^t |X(u)-X(s)|^2 du$. Thus
\[
\E_\P[\langle M\rangle(t)]\le 4\int_s^t \E_\P[|X(u)-X(s)|^2]du=4\int_s^t (u-s)du=2(t-s)^2,
\]
so that
\[
\E_\P[|X(t)-X(s)|^4] = \E_\P[(M(t)+t-s)^2] \le 2 \E_\P[\langle M\rangle(t)] + 2(t-s)^2 \le 6(t-s)^2.
\]
Kolmogorov's continuity criterion (see \citet{RY:99}, Theorem~I.2.1 and its proof) then gives, for any fixed $T>0$ and $\alpha\in(0,\frac14)$,
\[
\E_\P\left[ \left( \sup_{0\le s<t\le T} \frac{|X(t)-X(s)|}{|t-s|^\alpha} \right)^4 \right] \le c
\]
for some constant $c=c(T,\alpha)$ that does not depend on $\P\in\Pcal_x$. Since H\"older balls are relatively compact in $C([0,T],\R^d)$ by the Arzel\`a--Ascoli theorem, it follows that $\Pcal_x$ is tight and hence relatively compact by Prokhorov's theorem. To see that $\Pcal_x$ is closed, note that the martingale property of both $X$ and $|X|^2-t$ (and hence the property $\tr\langle X\rangle(t)\equiv t$) carries over to weak limits of sequences in $\Pcal_x$. 

\ref{P_v_proporties_2}: First observe that $\Pcal_x$ consists of the pushforwards $(x+\fdot)_*\P$ with $\P\in\Pcal_0$. Thus $v(x)=\sup_{\P\in\Pcal_0}f(x,\P)$, where $f(x,\P)=g((x+\fdot)_*\P)$ and $g(\P)=\essi{\P}  \tau_K$. By Lemma~\ref{L_usc_tauK}, the function $g$ is upper semicontinuous. Since $f$ is the composition of $g$ with the continuous function $(x,\P)\mapsto(x+\fdot)_*\P$ from $\R^d \times \Pcal(\Omega)$ to $\Pcal(\Omega)$, it is also upper semicontinuous.
 Moreover, $\Pcal_0$ is compact by \ref{P_v_proporties_1}. A suitable selection theorem, see e.g.\ \citet[Proposition~7.33]{ber_shr_78}, yields upper semicontinuity of $v$ as well as a measurable map $x\mapsto \Q_x$ from $\R^d$ into $\Pcal_0$ such that $v(x)=f(x,\Q_x)$ for all $x\in\R^d$. Setting $\P_x=(x+\fdot)_*\Q_x$ gives the required map.

\ref{P_v_proporties_3}:
Fix $x\in\R^d$ and an $\F^X$-stopping time $\theta$. We first first fix $\P \in \mathcal P_x$ and prove that
\begin{equation}\label{P_v_proporties_3_eq0}
v(x) \ge \essi{\P} \{\theta\wedge\tau_K + v(X(\theta))\bm1_{\theta\le\tau_K}\}.
\end{equation}
To this end, consider the extended space $\Omega\times\Omega$ with coordinate process $(X,Y)(t,\omega,\tilde\omega)=(\omega(t),\tilde\omega(t))$ and define a law $\P'$ on $(\Omega\times\Omega, \Fcal \otimes \Fcal)$ by $\P'(d\omega,d\tilde\omega)=\P_{X(\theta(\omega),\omega)}(d\tilde\omega)\P(d\omega)$, where
we use the measurable map $\R^d \ni y\mapsto\P_y \in \Pcal$ from \ref{P_v_proporties_2}. 
We now consider the process $X'(t) = X(t)\bm1_{t\le\theta} + Y(t-\theta)\bm1_{t>\theta}$ and let $\Q$ denote the law of $X'$.    Define next $\theta'(\omega,\tilde\omega)=\theta(X'(\omega,\tilde\omega))$; thus $\theta'$ depends on the trajectory of $X'$ like $\theta$ depends on the trajectory of $X$. Since $\theta$ is an $\F^X$-stopping time, and since $X'(t)$ and $X(t)$ coincide for all $t\le\theta$, it follows by Galmarino's test that $\theta'(\omega,\tilde\omega)=\theta(\omega)$ for all $(\omega,\tilde\omega)$; see \citet[Lemma~1.3.3]{SV_multi}. 
Consequently, for all bounded measurable maps $F,G\colon\Omega\to\R$, we have
\begin{align*}
\E_\Q[F(X(\fdot\wedge\theta))G(X(\theta+\fdot))]
&= \E_{\P'}[F(X'(\fdot\wedge\theta'))G(X'(\theta'+\fdot))] \\
&= \E_{\P'}[F(X(\fdot\wedge\theta))G(Y)] \\
&= \E_{\P}[F(X(\fdot\wedge\theta))\E_{\P_{X(\theta)}}[G(X)]].
\end{align*}
Thanks to the definition of $\Q$ we have
\begin{equation}\label{P_v_proporties_3_eq1a}
	\Q\in\Pcal_x,\qquad \Q|_{\Fcal^X_\theta}=\P|_{\Fcal^X_\theta}.
\end{equation}
Furthermore, with the notation $\tau_K(X(\theta+\fdot))=\inf\{t\ge0\colon X(\theta+t)\notin K\}$, one derives the identity
\begin{equation}\label{P_v_proporties_3_eq10113}
\tau_K = \theta\wedge\tau_K + \tau_K(X(\theta+\fdot))\bm1_{\theta\le\tau_K}.
\end{equation}
Finally, the $\Q$-conditional distribution of $X(\theta + \cdot)$ given $\Fcal^X_\theta$ equals the $\P_{X(\theta)}$-distribution of $X(\cdot)$. Since also $\P_y$ is optimal for every $y$, we get
\begin{equation}\label{P_v_proporties_3_eq1b}
\tau_K(X(\theta+\fdot)) \ge v(X(\theta)),\quad \text{$\Q$-a.s.}
\end{equation}
Combining the definition of $v(x)$, \eqref{P_v_proporties_3_eq1a}, \eqref{P_v_proporties_3_eq10113}, and \eqref{P_v_proporties_3_eq1b}, we get
\begin{align*}
v(x) \ge \essi{\Q}  \tau_K &= \essi{\Q}  \{\theta\wedge\tau_K + \tau_K(X(\theta+\fdot))
\bm 1_{ \theta\le\tau_K} \}\\
&\ge \essi{\Q}  \{\theta\wedge\tau_K + v(X(\theta))\bm1_{\theta\le\tau_K}\} \\
&= \essi{\P}  \{\theta\wedge\tau_K + v(X(\theta))\bm1_{\theta\le\tau_K}\}.
\end{align*}
In the last step we used that $\theta\wedge\tau_K$ and $\bm1_{\theta\le\tau_K}$ are $\Fcal^X_\theta$-measurable (even though $\tau_K$ is only an $\F^X_+$-stopping time) and hence have the same law under $\P$ as under $\Q$ due to \eqref{P_v_proporties_3_eq1a}. This proves \eqref{P_v_proporties_3_eq0}.

It remains to prove that
\begin{equation}\label{P_v_proporties_3_eq4}
v(x) = \essi{\P} \{\theta\wedge\tau_K+v(X(\theta))\bm1_{\theta\le\tau_K}\}
\end{equation}
for any optimal $\P\in\Pcal_x$. The proof uses the notion of conditional essential infimum. For a random variable $Y$ and a sub-$\sigma$-algebra $\Gcal\subset\Fcal$, the conditional essential infimum of $Y$ given $\Gcal$ is defined as the largest $\Gcal$-measurable random variable $\P$-a.s.\ dominated by $Y$, denoted by $\essi{\P} \{Y\mid\Gcal\}$. Moreover, if $\{F_\omega\}_{\omega\in\Omega}$ is a regular conditional distribution of $Y$ given $\Gcal$, we have $\essi{\P} \{Y\mid\Gcal\}(\omega)=\essinf F_\omega$ for $\P$-a.e.\ $\omega$, where we set $\essinf F_\omega=\sup\{c \in \R\colon F_\omega([c,\infty))=1\}$. For further details, see \cite{bar_car_jen_03,lar_18_condinf}. 

Now, fix any optimal $\P\in\Pcal_x$. Then, using \eqref{P_v_proporties_3_eq10113}, we get
\begin{equation}\label{P_v_proporties_3_eq6}
v(x) \le \tau_K = \theta\wedge\tau_K + \tau_K(X(\theta+\fdot))\bm1_{\theta\le\tau_K},\quad \text{$\P$-a.s.}
\end{equation}
Next, let $\{Q_\omega\}_{\omega\in\Omega}$ be a regular conditional distribution of $X(\theta+\fdot)$ given $\Fcal^X_\theta$; see \citet[Theorem~1.3.4]{SV_multi}. In particular, $\{F_\omega\}_{\omega\in\Omega}$ with $F_\omega=Q_\omega(\tau_K\in\fdot)$ is then a regular conditional distribution of $\tau_K(X(\theta+\fdot))$ given $\Fcal^X_\theta$. Take now the $\Fcal^X_\theta$-conditional essential infimum in \eqref{P_v_proporties_3_eq6}. Since $\theta\wedge\tau_K$ and $\bm1_{\theta\le\tau_K}$ are $\Fcal^X_\theta$-measurable we get
\begin{align*}
v(x) &\le \theta\wedge\tau_K + \bm1_{\theta\le\tau_K} \essi{\P} \{\tau_K(X(\theta+\fdot))\mid\Fcal^X_\theta\} \\
&= \theta\wedge\tau_K + \bm1_{\theta\le\tau_K}\essinf F_\omega \\
&= \theta\wedge\tau_K + \bm1_{\theta\le\tau_K}\essi{\Q_\omega}\tau_K, \quad \text{$\P$-a.s.}
\end{align*}
One readily verifies that $\Q_\omega\in\Pcal_{X(\theta,\omega)}$ for $\P$-a.e.\ $\omega$. Hence $\essi{\Q_\omega}\tau_K\le v(X(\theta,\omega))$ for $\P$-a.e.\ $\omega$, and we deduce that $v(x)\le\theta\wedge\tau_K+v(X(\theta))\bm1_{\theta\le\tau_K}$, $\P$-a.s. This yields \eqref{P_v_proporties_3_eq4}, and completes the proof of the proposition.
\end{proof}

It is not true in general that, under an optimal law, $X(t)$ is located on the $t$-level surface of the value function, even if the value function is smooth. The following example illustrates this.

\begin{example}\label{E_target_superlevel}
Let $K\subset\R^3$ be the union of the line segment $L=(-1,1)\times\{(0,0)\}$ and the shifted unit discs $(1,0,0)+D$ and $(-1,0,0)+D$ with $D=\{(0,y,z)\colon y^2+z^2\le 1\}$. Thanks to Example~\ref{Ex:190511}, at points $(\pm1,y,z)$ in the shifted discs, the value function is $v(\pm1,y,z)=1-y^2-z^2$. At points $\bar x=(x,0,0)\in L$, the value function is $v(\bar x)=1$. Indeed, $X$ evolves as a Brownian motion along $L$ until it hits $(\pm1,0,0)$. This happens arbitrarily quickly, and at either point the value function is $1$. Thus everywhere in $K$, $v(x,y,z)=1-y^2-z^2$. We see that for $\bar x\in L$, under any optimal $\P\in\Pcal_{\bar x}$ one has $v(X(t))>v(\bar x)-t$ for all $t>0$. Note that in this example, $v$ is very smooth: on $K$ it coincides with a polynomial.
\end{example}

The following result can be viewed as an assertion about {\em propagation of continuity}: if the value function is continuous on a certain set, then it is also continuous on a larger set. Upper semicontinuity, which holds in general due to Proposition~\ref{P_v_proporties}\ref{P_v_proporties_2}, plays an important role. A refined version of this result is crucial in Section~\ref{S_convex}, where $K$ will be a convex body.

\begin{proposition}\label{P:190523}
Let $K$ be compact, and assume $v|_{\partial K}$ is continuous. Then $v|_K$ is continuous.
\end{proposition}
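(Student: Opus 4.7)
The plan is to establish lower semicontinuity on $K$, since $v$ is already upper semicontinuous by Proposition~\ref{P_v_proporties}\ref{P_v_proporties_2}. Fix $\bar x\in K$ and a sequence $x_n\to\bar x$ in $K$; the goal is to show $\liminf_n v(x_n)\ge v(\bar x)$. The approach is to construct, for each large $n$, a measure $\widehat\P_n\in\Pcal_{x_n}$ whose exit time from $K$ is at least $v(\bar x)-\varepsilon$, by translating an optimal law $\P\in\Pcal_{\bar x}$ and then concatenating it with an optimal continuation at the first moment the translated trajectory exits~$K$.

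Concretely, let $\P$ be optimal for $\bar x$, and let $y\mapsto\P_y$ be the measurable selection of optimal laws from Proposition~\ref{P_v_proporties}\ref{P_v_proporties_2}. Define $\P_n:=(x_n-\bar x+\fdot)_*\P\in\Pcal_{x_n}$, so that under $\P_n$ the coordinate process equals $Y+(x_n-\bar x)$ with $Y\sim\P$. Its first exit from $K$ occurs at $\sigma_n$, namely the first exit of $Y$ from $K_n:=K+\bar x-x_n$, and $X(\sigma_n)\in\partial K$ by continuity. Let $\widehat\P_n$ be the standard stopping-time concatenation of $\P_n$ with $\P_{X(\sigma_n)}$ at time $\sigma_n$; this preserves the martingale and unit-trace quadratic variation properties, so $\widehat\P_n\in\Pcal_{x_n}$, and optimality of $\P_{X(\sigma_n)}$ gives $\tau_K\ge\sigma_n+v(X(\sigma_n))$ $\widehat\P_n$-a.s., whence
\[
v(x_n)\ge\essinf_\P\bigl[\sigma_n+v(Y(\sigma_n)+x_n-\bar x)\bigr].
\]
Applying the DPP (Proposition~\ref{P_v_proporties}\ref{P_v_proporties_3}) to the optimal $\P$ with stopping time $\sigma_n$ also yields $v(\bar x)=\essinf_\P[\sigma_n\wedge\tau_K+v(Y(\sigma_n))\bm1_{\sigma_n\le\tau_K}]$, where here $\tau_K$ is the exit time of $Y$ under~$\P$.

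The key analytic ingredient for comparing the two expressions is the following uniform estimate: for every $\varepsilon>0$ there is $\delta>0$ such that $v(z)\le v(z_0)+\varepsilon$ whenever $d(z,\partial K)<\delta$ and $z_0\in\partial K$ satisfies $|z-z_0|<\delta$. This will be proved by a compactness/contradiction argument from upper semicontinuity of $v$ on $\R^d$ together with the uniform continuity of $v|_{\partial K}$ on the compact set $\partial K$. Writing $p_n=Y(\sigma_n)$ and $q_n=p_n+(x_n-\bar x)\in\partial K$, one has $|p_n-q_n|=|x_n-\bar x|\to0$, hence $d(p_n,\partial K)\to0$, so for $n$ large $v(p_n)\le v(q_n)+\varepsilon$ (using also $v(p_n)=0$ if $p_n\notin K$). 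On $\{\sigma_n\le\tau_K\}$ this gives $\sigma_n+v(q_n)\ge\sigma_n+v(p_n)-\varepsilon$, and on $\{\sigma_n>\tau_K\}$ we have $\sigma_n+v(q_n)\ge\sigma_n\ge\tau_K$ since $v\ge0$. Taking essential infimum in the displayed lower bound for $v(x_n)$ then yields $v(x_n)\ge v(\bar x)-\varepsilon$ for all large $n$, and letting $\varepsilon\downarrow0$ gives LSC.

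The main obstacle is the uniform estimate near $\partial K$: a pointwise version from upper semicontinuity alone does not suffice to control an essential infimum uniformly in $\omega$, so one really needs compactness of $\partial K$ together with the assumed continuity of $v|_{\partial K}$ to upgrade it. The remaining technical point is verifying that the concatenated law $\widehat\P_n$ lies in $\Pcal_{x_n}$, which is standard measurable stopping-time gluing as in the proof of Proposition~\ref{P_v_proporties}\ref{P_v_proporties_3}.
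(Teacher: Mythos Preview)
Your approach is correct and essentially the same as the paper's: both translate an optimal law for $\bar x$, invoke the uniform boundary estimate (the paper isolates this as Lemma~\ref{L_unif_usc}), and combine two instances of the dynamic programming principle---your explicit concatenation step is precisely the ``$\ge$'' direction of the DPP at $x_n$, which the paper simply cites rather than re-deriving. One small technical point: your $\sigma_n$ is the first exit from the \emph{closed} set $K_n$ and hence only an $\F^X_+$-stopping time, so to match the hypothesis of Proposition~\ref{P_v_proporties}\ref{P_v_proporties_3} you should take the first exit from $\interior(K_n)$ instead (as the paper does with $\interior(K)$); the rest of the argument is unaffected.
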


\begin{proof}
Since $v$ is upper semicontinuous by Proposition~\ref{P_v_proporties}\ref{P_v_proporties_2}, since $\partial K$ is compact, and since $v|_{\partial K}$ is continuous by assumption, Lemma~\ref{L_unif_usc} below gives a modulus $\omega$ such that
\begin{equation}\label{eq_03062019_15}
\text{$v(x) \le v(y) + \omega(|x-y|)$ for all $x\in\R^d$ and $y\in\partial K$.}
\end{equation}
Fix $\bar x,\bar y\in K$ and an optimal law $\P\in\Pcal_{\bar x}$. Define the process $Y=X-\bar x+\bar y$ and the $\F^X$-stopping time $\theta=\inf\{t\ge0\colon Y(t)\notin \interior(K)\}$. Note that $\P(\theta<\infty)=1$ by Example~\ref{Ex:190511}. Since $Y(\theta)\in\partial K$, we have from \eqref{eq_03062019_15} that
\[
v(X(\theta)) \le v(Y(\theta)) + \omega(|\bar x - \bar y|), \quad \text{$\P$-a.s.}
\]
We now combine this with two applications of the dynamic programming principle of Proposition~\ref{P_v_proporties}\ref{P_v_proporties_3}.  We get
\begin{align*}
v(\bar x) &= \essi{\P}  \{\theta\wedge\tau_K + v(X(\theta))\bm1_{\theta\le\tau_K}\} \\
&\le \essi{\P}  \{\theta + v(Y(\theta))\} + \omega( |\bar x-\bar y|) \\
&\le v(\bar y) + \omega(|\bar x-\bar y|).
\end{align*}
In the last inequality, the application of the dynamic programming principle uses that the law of $Y$ lies in $\Pcal_{\bar y}$, that $\F^Y=\F^X$, and that $\theta\le\inf\{t\ge0\colon Y(t)\notin K\}$, $\P$-a.s. Since $\bar x,\bar y\in K$ were arbitrary, we deduce that $v|_K$ is uniformly continuous with modulus $\omega$.
\end{proof}

The following lemma is elementary, but crucial for our results on propagation of continuity. This is what allows us to exploit the fact that the value function is always upper semicontinuous.

\begin{lemma}\label{L_unif_usc}
Let $C\subset\R^d$ be a compact set, and let $f\colon\R^d\to\R$ be a function that is upper semicontinuous at every point in $C$. If the restriction $f|_C$ is continuous, then there exists a modulus $\omega$ such that
\[
\text{$f(x) \le f(y) + \omega(|x-y|)$ for all $x\in\R^d$ and $y\in C$.}
\]
\end{lemma}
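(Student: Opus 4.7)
The plan is to construct the modulus directly as the upper envelope
\[
\omega_0(\delta) = \sup\bigl\{f(x) - f(y) \colon x\in\R^d,\ y\in C,\ |x-y|\le\delta\bigr\},
\]
verify that $\omega_0$ is finite for sufficiently small $\delta$, and show that $\omega_0(\delta)\to 0$ as $\delta\to 0^+$. A nondecreasing, real-valued, right-continuous modulus $\omega$ with $\omega\ge\omega_0$ and $\omega(0+)=0$ can then be obtained by standard regularization.

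First I would establish finiteness. Since $f$ is upper semicontinuous at every $y\in C$, each $y$ admits an open ball $B(y,r_y)$ on which $f < f(y) + 1$. Finitely many such balls cover the compact set $C$, yielding an open neighborhood $U\supset C$ on which $f$ is bounded above by some constant $N$. Because $f|_C$ is continuous on the compact set $C$, it is also bounded below, say by $m$. Choose $\delta_0>0$ small enough that the $\delta_0$-neighborhood of $C$ is contained in $U$; then $\omega_0(\delta)\le N-m<\infty$ for all $\delta\le\delta_0$.

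Next I would prove $\omega_0(\delta)\to 0$ as $\delta\to 0^+$ by contradiction. If this failed, there would exist $\varepsilon>0$, $\delta_n\downarrow 0$, and sequences $x_n\in\R^d$, $y_n\in C$ with $|x_n-y_n|\le\delta_n$ and $f(x_n)-f(y_n)\ge\varepsilon$. Passing to a subsequence via compactness of $C$, we may assume $y_n\to y_*\in C$, and since $|x_n-y_n|\to 0$ also $x_n\to y_*$. Continuity of $f|_C$ gives $f(y_n)\to f(y_*)$, while upper semicontinuity of $f$ at $y_*$ gives $\limsup_n f(x_n)\le f(y_*)$. Hence $\limsup_n (f(x_n)-f(y_n))\le 0$, contradicting the lower bound $\varepsilon$.

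Finally, to produce a bona fide modulus, I would set $\omega(\delta)=\sup_{\delta'\le\delta}\omega_0(\delta')$ for $\delta\le\delta_0$, which is nondecreasing and still satisfies $\omega(\delta)\to 0$ as $\delta\to 0^+$, and extend it by any finite, nondecreasing function for $\delta>\delta_0$ (this is harmless in the application of Proposition~\ref{P:190523}, where $|\bar x-\bar y|\le\diam(K)$ is bounded, and $\omega_0$ is bounded on the relevant range by compactness of $K$ and the argument in the previous paragraph). The inequality $f(x)\le f(y)+\omega(|x-y|)$ for all $x\in\R^d$, $y\in C$ then follows from the definition of $\omega_0$. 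The main obstacle is the finiteness/boundedness issue: without the upper semicontinuity hypothesis together with compactness of $C$, one cannot rule out $\omega_0\equiv+\infty$, so the key technical point is the finite covering argument that bounds $f$ above in a neighborhood of $C$.
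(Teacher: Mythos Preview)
Your proof is correct and follows essentially the same strategy as the paper's: exploit compactness of $C$ together with upper semicontinuity of $f$ at points of $C$ and continuity of $f|_C$ to obtain the required uniformity. The only tactical difference is that the paper extracts, for each $\varepsilon>0$, an explicit $\delta>0$ via a finite cover of $C$ by balls on which both the upper semicontinuity of $f$ and the continuity of $f|_C$ are $\varepsilon/2$-controlled, whereas you argue by sequential compactness and contradiction; these are standard interchangeable ways to use compactness. Two minor remarks: your monotone regularization is redundant since $\omega_0$ is already nondecreasing by definition, and your separate finiteness step is not strictly needed once $\omega_0(0+)=0$ is established; also, both your proof and the paper's implicitly allow $\omega$ to be $+\infty$ for large arguments (equivalently, only establish the inequality on a bounded range of $|x-y|$), which, as you note, is all that is used downstream.
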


\begin{proof}
It suffices to pick any $\varepsilon>0$ and exhibit $\delta>0$ such that $f(x)\le f(y)+\varepsilon$ holds whenever $x\in\R^d$, $y\in C$, and $|x-y|<\delta$. Since $f|_C$ is continuous and $f$ is upper semicontinuous at $C$, for every $y\in C$ there exists $\delta_{y}>0$ such that $|f(y)-f(y')|<\varepsilon/2$ and $f(x)< f(y)+\varepsilon/2$ whenever $y'\in C$, $|y-y'|<\delta_{y}$, $x\in\R^d$, $|y-x|<\delta_{y}$. The balls $B(y,\delta_{y}/2)$, $y\in C$, cover $C$. By compactness, there is a finite subcover $B(y_i,r_i)$, $i=1,\ldots,n$, where $r_i=\delta_{y_i}/2$. Define $\delta=\min\{r_1,\ldots,r_n\}$. Suppose $x\in\R^d$, $y\in C$, and $|x-y|<\delta$. Then $y\in B(y_i,r_i)$ for some $i \in \{1, \ldots, n\}$, and hence $|x-y_i|\le|x-y|+|y-y_i|<2r_i\le\delta_{y_i}$ and $|y-y_i|<\delta_{y_i}$. Therefore
\[
f(x)< f(y_i)+\frac{\varepsilon}{2} \le f(y)+|f(y_i)-f(y)|+\frac{\varepsilon}{2}< f(y)+\varepsilon,
\]
as required.
\end{proof}

As mentioned in Section~\ref{S_main}, some of the analysis in this paper can be cast in the language of stochastic target problems. We end this section with a remark detailing this connection. Since this is not used in the analysis to come, we do not give proofs.

\begin{remark} \label{R_stoch_target}
For any $t\in[0,\infty)$, the \emph{target reachability set} when the target is $K$ and the controlled state dynamics is described by $\Pcal_x$ is defined by
\[
V(t) = \{x\in\R^d\colon \text{$\exists \P\in\Pcal_x$ such that $X(t) \in K$ a.s.}\}.
\]
This is a ``time-to-maturity'' version, in a weak formulation, of the definition in \citet[Section~2.4]{son_tou_02}. 
Clearly $V(0)=K$, and one can show that $V(t)=\emptyset$ for all $t>\text{diam}(K)^2/4$.
One expects the following representation of the value function $v$ in \eqref{eq_vxD} in terms of the target reachability set:
\[
v(x) = \sup\{t\ge0\colon x \in V(t)\}, \quad x\in K.
\]
This equality can be shown to hold if $K$ is convex, but there are non-convex examples where it fails. In such cases, one can work with the \emph{obstacle version} of the stochastic target problem, where the reachability set is defined by
\[
W(t) = \{x\in\R^d\colon \text{$\exists \P\in\Pcal_x$ such that $X_s \in K$ for all $s\in[0,t]$ a.s.}\}.
\]
This problem is discussed briefly in Section~7 of \cite{son_tou_02} and further in \cite{bou_vu_10} (where the terminology ``obstacle version'' is introduced). It is straightforward to show that
\[
v(x) = \sup\{T\ge0\colon x\in W(T)\}, \quad x \in K,
\]
regardless of the geometry of $K$. A suitable weak-formulation version of the geometric dynamic programming principle in Theorem~7.1 of \cite{son_tou_02} or Theorem~2.1 of \cite{bou_vu_10} could then be used to derive characterizations of $W(t)$, and hence $v(x)$, in terms of PDEs.
\end{remark}

\section{The value function is a viscosity solution}\label{S_viscosity_soln}

In this section we prove Theorem~\ref{T_usc_vs}, the viscosity solution property, assuming that $d \geq 2$ and that $K$ is compact but not necessarily convex. (We already know from Proposition~\ref{P_v_proporties}\ref{P_v_proporties_2} that $v$ is upper semicontinuous.) A  bounded function $u\colon K \to\R$ is called a \emph{viscosity subsolution} of $F(\nabla u,\nabla^2u)=1$ in $\interior(K)$ if
\[
\left.
\begin{minipage}[c][3em]{.35\textwidth}\center
$(\bar x,\varphi)\in \interior(K) \times C^2(\R^d)$ and \\[1ex] $(u^*-\varphi)(\bar x) = \max_K(u^*-\varphi)$
\end{minipage}
\right\}
\quad\Longrightarrow\quad\text{$F_*(\nabla\varphi(\bar x),\nabla^2\varphi(\bar x))\le1$,}
\]
where an upper (lower) star denotes upper (lower) semicontinuous envelope (restricting the function to $K$). We say that $u$ has \emph{zero boundary condition} (in the viscosity sense) if
\[
\left.
\begin{minipage}[c][3em]{.35\textwidth}\center
$(\bar x,\varphi)\in \partial K \times C^2(\R^d)$ and \\[1ex] $(u^*-\varphi)(\bar x) = \max_K(u^*-\varphi)$
\end{minipage}
\right\}
\quad\Longrightarrow\quad\text{$F_*(\nabla\varphi(\bar x),\nabla^2\varphi(\bar x))\le1$ or $u^*(\bar x)\le 0$.}
\]
The function $u$ is said to be a {\em viscosity supersolution} in $\interior(K)$ with \emph{zero boundary condition} if the same conditions hold with $u^*$, $F_*$, $\max$, $\le$ replaced by $u_*$, $F^*$, $\min$, $\ge$. It is a {\em viscosity solution}  in $\interior(K)$ with \emph{zero boundary condition} if it is both a viscosity sub- and supersolution in $\interior(K)$ with zero boundary condition.

 To prove Theorem~\ref{T_usc_vs}, we must establish the sub- and supersolution properties. We carry out these tasks separately in the following two subsections. To do so, the following description of the semicontinuous envelopes of $F$ will be needed.

\begin{lemma} \label{L:191122}
The nonlinearity \eqref{MCF_FpM} satisfies $F_*=F$, as well as $F^*(p,M)=F(p,M)$ for $p\ne0$, and $F^*(0,M)=-\lambda_2(M)/2$. Here $\lambda_1(M)\ge\lambda_2(M)\ge\cdots\ge\lambda_d(M)$ are the eigenvalues of $M\in\S^d$. In particular, $F$ is continuous on the set $(\R^d\setminus\{0\})\times\S^d$.
\end{lemma}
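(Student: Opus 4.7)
Call $a \in \S^d$ \emph{admissible for $(p, M)$} if $a \succeq 0$, $\tr(a) = 1$, and $a p = 0$; then \eqref{MCF_FpM} reads $F(p, M) = \inf_a \{-\tfrac12 \tr(aM)\}$ over admissible $a$. I will prove in turn: lower semicontinuity of $F$ (giving $F_* = F$); upper semicontinuity of $F$ on $\{p \neq 0\}$ (giving $F^* = F$ there); and the value $F^*(0, M) = -\lambda_2(M)/2$. Continuity on $(\R^d \setminus \{0\}) \times \S^d$ will then follow from $F_* = F^* = F$ on that set.

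The lower semicontinuity $F_* = F$ is a standard compactness argument. Given $(p_n, M_n) \to (p, M)$, pass to a subsequence along which $F(p_{n_k}, M_{n_k}) \to \liminf_n F(p_n, M_n)$ and pick $a_{n_k}$ admissible for $(p_{n_k}, M_{n_k})$ with $-\tfrac12 \tr(a_{n_k} M_{n_k}) \le F(p_{n_k}, M_{n_k}) + 1/k$. Since $\{a \in \S^d : a \succeq 0,\ \tr a = 1\}$ is compact, a further subsequence converges to some $a$ inheriting $a \succeq 0$, $\tr a = 1$, and $ap = \lim a_{n_k} p_{n_k} = 0$. Thus $a$ is admissible for $(p, M)$ and $F(p, M) \le -\tfrac12 \tr(aM) = \liminf F(p_n, M_n)$.

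For $F^* = F$ on $\{p \neq 0\}$ I plan a symmetric projection construction. Write $P_q := I - q q^\top/|q|^2$ for the orthogonal projection onto $q^\perp$, defined when $q \ne 0$. Given $(p, M)$ with $p \neq 0$ and $\varepsilon > 0$, pick $a$ admissible for $(p, M)$ with $-\tfrac12 \tr(aM) \le F(p, M) + \varepsilon$. Since $a = a^\top$ and $ap = 0$, one verifies $P_p a P_p = a$, so by continuity of $q \mapsto P_q$ at $p$ one has $P_{p_n} a P_{p_n} \to a$ and $\tr(P_{p_n} a P_{p_n}) \to 1$. For large $n$, $a_n := P_{p_n} a P_{p_n} / \tr(P_{p_n} a P_{p_n})$ is admissible for $(p_n, M_n)$ and $a_n \to a$, giving $F(p_n, M_n) \le -\tfrac12 \tr(a_n M_n) \to -\tfrac12 \tr(aM) \le F(p, M) + \varepsilon$. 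Letting $\varepsilon \to 0$ yields $\limsup F(p_n, M_n) \le F(p, M)$.

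Finally, to compute $F^*(0, M)$, observe that when $p \neq 0$ the constraint $ap = 0$ forces $a$ to live on $p^\perp$, whence $F(p, M) = -\tfrac12 \max\{v^\top M v : v \in p^\perp,\ |v| = 1\}$. By the Courant--Fischer min-max theorem, $\max\{v^\top M v : v \in V,\ |v| = 1\} \ge \lambda_2(M)$ for every $(d-1)$-dimensional subspace $V$, with equality precisely when $V$ is the span of eigenvectors of $\lambda_2(M), \ldots, \lambda_d(M)$. Combined with continuity of eigenvalues in $M$, this gives $\limsup F(p_n, M_n) \le -\lambda_2(M)/2$ for every $(p_n, M_n) \to (0, M)$ with $p_n \neq 0$; and the sequence $(p_n, M_n) = (n^{-1} e, M)$, where $e$ is a top unit eigenvector of $M$, attains this bound. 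Hence $F^*(0, M) = -\lambda_2(M)/2$. The main technical point is the symmetric projection argument in the third paragraph, which hinges on the identity $P_p a P_p = a$ for admissible $a$; the remaining steps reduce to standard compactness and the Courant--Fischer characterization.
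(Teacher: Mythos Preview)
Your proof is correct. The overall strategy coincides with the paper's: both ultimately rest on the representation $F(p,M)=-\tfrac12\max\{y^\top M y:\ |y|=1,\ y^\top p=0\}$, the eigenvalue bound $F(p,M)\le -\tfrac12\lambda_2(M)$, and the approach to $(0,M)$ along a top eigenvector to realize $-\tfrac12\lambda_2(M)$. The packaging differs slightly. You prove $F_*=F$ globally by a single compactness argument on the admissible set $\{a\succeq0,\ \tr a=1\}$ and then establish upper semicontinuity at $p\ne0$ by transporting a near-optimizer via the projections $P_{p_n}$; the paper instead asserts continuity at $p\ne0$ directly from the vector representation and then treats $F_*(0,M)$ and $F^*(0,M)$ separately using the explicit sandwich $-\tfrac12\lambda_1(M)\le F(p,M)\le -\tfrac12\lambda_2(M)$, derived by hand in an eigenbasis rather than by invoking Courant--Fischer. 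Your projection argument makes the continuity at $p\ne0$ fully explicit, and your compactness step is a clean, uniform way to get lower semicontinuity; the paper's explicit two-sided eigenvalue bound is a bit more hands-on but gives the same information. One small point: when computing $F^*(0,M)$ you restrict to sequences with $p_n\ne0$; sequences with $p_n=0$ give $F(0,M_n)=-\tfrac12\lambda_1(M_n)\le -\tfrac12\lambda_2(M_n)$, so they do not affect the $\limsup$, but it is worth saying so.
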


\begin{proof}
From the representation \eqref{eq_infyF} we have $F(p,M)=-\frac12\sup\{y^\top M y\colon \text{$|y|=1$, $y^\top p=0$}\}$. One checks that this is continuous on the set $(\R^d\setminus\{0\})\times\S^d$, and in particular equal to $F_*$ and $F^*$ there. Next, we claim that
\begin{equation}\label{eq_Fpm_repr0001}
-\frac12\lambda_1(M) \le F(p,M) \le -\frac12\lambda_2(M)
\end{equation}
for all $(p,M)$. The first inequality follows because $\sup\{y^\top M y\colon |y|=1\}=\lambda_1(M)$. For the second inequality, use the spectral theorem to write $M=\lambda_1(M)w_1w_1^\top+\cdots+\lambda_d(M)w_dw_d^\top$ for an orthonormal basis $w_1,\ldots,w_d$ of eigenvectors of $M$. Express $p$ and $y$ is this basis, say $p=\pi_1w_1+\cdots+\pi_dw_d$ and $y=\eta_1w_1+\cdots+\eta_dw_d$, to get
\[
F(p,M) = -\frac12 \sup\left\{\sum_{i=1}^d \eta_i^2 \lambda_i(M) \colon \text{$\sum_{i=1}^d \eta_i^2=1$, $\sum_{i=1}^d \eta_i \pi_i =0$}\right\}.
\]
If $\pi_1=0$ one can take $\eta_1=1$ and $\eta_i=0$ for $i\ge2$ to get $F(p,M)\leq -\lambda_1(M)/2$. Otherwise one can take $\eta_2=(1+(\pi_2/\pi_1)^2)^{-1/2}$ and $\eta_1=-\eta_2 \pi_2/\pi_1$ to get $F(p,M)\le-(\eta_1^2\lambda_1(M)+\eta_2^2\lambda_2(M))/2\le-\lambda_2(M)/2$. In either case, the second inequality of \eqref{eq_Fpm_repr0001} holds.

For any fixed $M$, there is a sequence $(p_n,M_n)\to(0,M)$ with $F_*(0,M)=\lim_n F(p_n,M_n)$. Thus by \eqref{eq_Fpm_repr0001} and since $\lambda_1(M)$ is continuous in $M$, we get
\[
F(0,M) \ge F_*(0,M)=\lim_n F(p_n,M_n) \ge -\frac12 \lim_n\lambda_1(M_n)=-\frac12\lambda_1(M)=F(0,M).
\]
This shows that $F_*(0,M)=F(0,M)$. On the other hand, with $w_1$ an eigenvector of $M$ with eigenvalue $\lambda_1(M)$, we have $F(n^{-1}w_1,M)=-\lambda_2(M)/2$. Sending $n\to\infty$ shows that $F^*(0,M)\ge -\lambda_2(M)/2$ and thus, by \eqref{eq_Fpm_repr0001} and the continuity of  $\lambda_2(M)$  in $M$, that $F^*(0,M)= -\lambda_2(M)/2$.
\end{proof}

For later use, let us also record the following observations. We let $|\cdot|_\text{op}$ denote the operator norm of a matrix.

\begin{lemma} \label{L:211205.2}
	If $p \in \R^d$, $M \in \S^d$, $F^*(p, M) > 0$, and $B$ is an $d \times d$ invertible matrix then 
	\[
		F^*(p,M) \leq | (BB^\top)^{-1}|_\text{\textnormal{op}}\, F^*(B^\top p, B^\top M B).
	\]
\end{lemma}

\begin{proof}
Assume first $p \neq 0$, so that $F^*(p, M) = F(p, M)$ by Lemma~\ref{L:191122}. 
Consider any $\bar a\in\S^d_+$ with $\bar a B^\top p=0$ and $\tr(\bar a)=1$. 
Then $\tr(B \bar a B^\top) > 0$. 
Define now $ a=(B \bar a B^\top) / \tr(B \bar a B^\top)$. Then $a\in\S^d_+$, $a p=0$, and $\tr(a)=1$. Thus from the definition of $F$,
\begin{align} \label{eq:211205.1}
F^*(p, M) = F(p, M) \leq -\frac{1}{2} \tr(aM) = -\frac{1}{2} \tr(\bar a B^\top M B) \frac{1}{\tr(B \bar a B^\top)}.
\end{align}
Since $F^*(p, M) \geq 0$ we have $\tr(\bar a B^\top M B) \leq 0$. Moreover, using the spectral theorem we obtain $1= \tr(B \bar a B^\top  (BB^\top)^{-1}) \le \tr(B \bar a B^\top) | (BB^\top)^{-1}|_\text{op}$. Thus \eqref{eq:211205.1} becomes
\[
F^*(p, M) \le -\frac{1}{2} \tr(\bar a B^\top M B) | (BB^\top)^{-1}|_\text{op},
\]
and taking infimum on the right-hand side gives the assertion, still for $p \ne 0$.

Consider now the case $p=0$ and consider a sequence $(p_n, M_n)$ converging to $(p, M)$ with $p_n \neq 0$ such that $\lim_n F(p_n, M_n) = F^*(p, M)$. Since for sufficiently large $n$ we have $F(p_n, M_n)  > 0$, we get from the case just established that
\begin{align*}
	 F^*(p, M) &= \lim_n F(p_n, M_n)  \leq  | (BB^\top)^{-1}|_\text{op} \limsup_n 
	 	F^*(B^\top p_n, B^\top M_n B) \\
		&\le  | (BB^\top)^{-1}|_\text{op} F^*(B^\top p, B^\top M B),
\end{align*}
as desired.
\end{proof}

\begin{corollary}\label{C:211205}
	Let $B$ be a $d \times d$ invertible matrix, viewed as a linear map. 
	Define $K' = B^{-1}(K)$ and $\bar w = | (BB^\top)^{-1}|_\text{\textnormal{op}}\, w \circ B$. If, on $K$, $w$ is a lower semicontinuous viscosity supersolution of \eqref{PDE_1} with zero boundary condition, then so is $\bar w$ on $K'$. 
\end{corollary}
\begin{proof}
	The statement follows from the definition of viscosity supersolution, in conjunction with Lemma~\ref{L:211205.2}.
\end{proof}

\subsection{Subsolution property}

We now prove the subsolution property claimed in Theorem~\ref{T_usc_vs}. Since $v$ is upper semicontinuous and $F$ is lower semicontinuous, we may drop the stars in the definition of subsolution.

\begin{proof}[Proof of the subsolution property]
Fix $\bar x\in K$. If $\bar x \in \interior(K)$ then $v(\bar x) > 0$ by Example~\ref{Ex:190511}. If $\bar x \in \partial K$ and $v(\bar x) = 0$ then the subsolution property holds for this point.  Hence, without loss of generality, we may assume that $v(\bar x) > 0$. 

Fix now $\varphi\in C^2(\R^d)$ with $\varphi(\bar x)=v(\bar x)$ and $\varphi(x) \geq v(x)$ for all $x\ne\bar x$. We assume that $F(\nabla\varphi(\bar x),\nabla^2\varphi(\bar x))>1$ and work towards a contradiction. Without loss of generality, we may assume $\varphi(x) > v(x)$ for all $x\ne\bar x$.

We claim that there exists  $\varepsilon  \in (0, \sqrt{v(\bar x)}/2)$ such that
\begin{equation}\label{T_MCF_char_pf2_new}
\begin{split}
&\text{for all $(x,a)\in (K \cap B_\varepsilon(\bar x))\times\S^d_+$ with $\tr(a)=1$, we have}\\
&\ 1+\frac12\tr(a\nabla^2\varphi(x)) >0  \quad \text{implies} \quad   \nabla\varphi(x)^\top a \nabla\varphi(x) \geq \varepsilon.
\end{split}
\end{equation}
Indeed, if not, there exist $\varepsilon_n\to0$ and $(x_n,a_n)\in (K \cap B_{\varepsilon_n}(\bar x)) \times\S^d_+$ such that $\tr(a_n)=1$ and  $\nabla\varphi(x_n)^\top a_n \nabla\varphi(x_n)\le\varepsilon_n$, but $1+\frac12\tr(a_n \nabla^2\varphi(x_n)) > 0$. In particular, $x_n\to\bar x$ and, after passing to a subsequence, we also have $a_n\to a$ for some $a\in\S^d_+$. Passing to the limit yields $\tr(a)=1$, $a\nabla\varphi(\bar x)=0$, and $1+\frac12\tr(a \nabla^2\varphi(\bar x))\ge0$. This contradicts the assumption that $1-F(\nabla\varphi(\bar x),\nabla^2\varphi(\bar x))<0$, and proves the claim. 

Note also that there exists some $c>0$ such that for
all $(x,a)\in (K \cap B_\varepsilon(\bar x)) \times\S^d_+$ with $\tr(a)=1$ we have
\begin{equation}\label{eq:191113.1}
1+\frac12\tr(a \nabla^2\varphi(x)) \leq  1 + \frac{1}{2} \lambda_1(\nabla^2\varphi( x)) \leq c < \infty,
\end{equation}
where $\lambda_1(M)$ denotes the largest eigenvalue of a symmetric matrix $M$. The boundedness comes from the continuity of $\lambda_1$.
Furthermore, we have
\begin{equation}\label{T_MCF_char_pf3_new}
\delta = \min_{K \cap \partial B_\varepsilon(\bar x)}(\varphi - v) > 0.
\end{equation}

Fix any optimal $\P\in\Pcal_{\bar x}$.  We then have a predictable $\S^d_+$-valued process  $(a(s))_{s\ge0}$  such that
\begin{equation*}
\langle X\rangle(t) = \int_0^t a(s) ds \quad\text{and}\quad \text{$\tr(a(t))=1$, $dt\otimes d\P$-a.e.}
\end{equation*}
Define the stopping time
\[
\theta = \inf\{t\ge0\colon X(t) \notin B_\varepsilon(\bar x)\} \wedge v(\bar x).
\]
Clearly $\theta\le\tau_K$ by definition of $v(\bar x)$ and $\P[X(\theta) \in K \cap \partial  B_\varepsilon(\bar x)] > 0$ since $\varepsilon < \sqrt{v(\bar x)}/2$ (recall Example~\ref{Ex:190511}).

We can now  define the predictable set
\[
J = \{s\in[0,\theta)\colon 1+\frac12\tr(a(s)\nabla^2\varphi(X(s))) > 0\}.
\]
Next, the dynamic programming principle of Proposition~\ref{P_v_proporties}\ref{P_v_proporties_3} yields
\begin{equation}\label{T_MCF_char_pf01_new}
v(\bar x) \le t \wedge\theta  + v(X(t \wedge \theta  )), \quad \text{$\P$--a.s.}
\end{equation}
Using \eqref{T_MCF_char_pf01_new}  and then \eqref{T_MCF_char_pf3_new}, we get
\[
\varphi(\bar x) = v(\bar x) \le t\wedge\theta + v(X(t\wedge\theta)) \le t\wedge\theta - \delta\bm1_{[\theta,\infty)}(t)   \bm1_{\{X(\theta) \in K \cap  \partial  B_\varepsilon(\bar x) \}}  + \varphi(X(t\wedge\theta)).
\]
Combining this with It\^o's formula, the definition of $J$, and \eqref{eq:191113.1}, we get
\[
\begin{aligned}
\delta \bm1_{[\theta,\infty)}(t) \bm1_{\{X(\theta) \in K \cap  \partial  B_\varepsilon(\bar x) \}} &\le t\wedge\theta +\varphi(X(t\wedge\theta))-\varphi(\bar x) \\
&= \int_0^{t\wedge\theta} \nabla\varphi(X(s))^\top dX(s) + \int_0^{t\wedge\theta} (1+\frac12\tr(a(s)\nabla^2\varphi(X(s))))ds \\
&\le \int_0^{t\wedge\theta} \nabla\varphi(X(s))^\top dX(s) + c \int_0^{t\wedge\theta} \bm1_J(s) ds.
\end{aligned}
\]
Now, define the process
\begin{equation*}
\widetilde X(t) = X(t) + \frac{c}{\varepsilon}\int_0^t a(s)\nabla\varphi(X(s)) \bm1_J(s)ds.
\end{equation*}
Due to \eqref{T_MCF_char_pf2_new} and the definition of~$J$, we then have
\begin{align}
\delta \bm1_{[\theta,\infty)}(t) \bm1_{\{X(\theta) \in K \cap  \partial  B_\varepsilon(\bar x) \}} &\le  \int_0^{t\wedge\theta} \nabla\varphi(X(s))^\top d\widetilde X(s) \nonumber\\
&\qquad+ \int_0^{t\wedge\theta} (c - \frac{c}{\varepsilon} \nabla\varphi(X(s))^\top a(X(s)) \nabla\varphi(X(s)) )\bm1_J(s)ds \nonumber\\
&\le \int_0^{t\wedge\theta} \nabla\varphi(X(s))^\top d\widetilde X(s).   \label{T_MCF_char_pf013_new}
\end{align}
Consider now the exponential local martingale $Z$ given by
\[
\frac{dZ(t)}{Z(t)} = -\frac{c}{\varepsilon}\bm1_J(t)\nabla\varphi(X(t))^\top dX(t), \quad Z_0=1.
\]
This is well-defined since $\nabla\varphi$ is bounded on the closure of $B_\varepsilon(\bar x)$, which contains $X(t)$ for $t\in J$. An application of It\^o's formula shows that multiplying \eqref{T_MCF_char_pf013_new} by $Z(t)$ gives a local martingale, and hence a supermartingale since it is nonnegative. Therefore,
\[
0 <  \delta\, \E[\bm1_{\{X(\theta) \in K \cap  \partial  B_\varepsilon(\bar x) \}} Z(\theta)] \le \E\left[ Z(\theta) \int_0^\theta \nabla\varphi(X(s))^\top d\widetilde X(s)\right] \le 0,
\]
using that $\theta<\infty$, $\P$-a.s., and $\P[X(\theta) \in K \cap \partial  B_\varepsilon(\bar x)] > 0$ for the first inequality. This contradiction completes the proof of the subsolution property.
\end{proof}

\subsection{Supersolution property}

The following result is used in the proof.

\begin{lemma} \label{L_weaksolution}
Let $m\in\N$ with $m\ge2$, and let $S$ be a nonzero skew-symmetric $m\times m$ matrix and let $x,\bar x\in\R^m$. Then there exists a weak solution to the SDE
\[
dY(t) = \frac{S(Y(t)-\bar x)}{|S(Y(t)-\bar x)|}dW(t), \quad Y(0)=x,
\]
that satisfies $|S(Y(t)-\bar x)|^2=|S(x-\bar x)|^2+t$ for all $t\ge0$. Here $W$ denotes a one-dimensional Brownian motion.
\end{lemma}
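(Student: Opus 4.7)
The plan is to reduce to a canonical spectral form for $S$, construct the weak solution in pieces, and then verify the identity via It\^o's formula. First I would translate by $\bar x$ to take, without loss of generality, $\bar x=0$; writing $Z=Y$, the SDE reads $dZ=SZ/|SZ|\,dW$ with $Z(0)=x$, and the target identity becomes $|SZ(t)|^2=|Sx|^2+t$. Next I would invoke the real spectral theorem for skew-symmetric matrices to choose an orthonormal basis in which $S=\mathrm{diag}(0_{m-2k},\,\lambda_1 J,\ldots,\lambda_k J)$, with $J=\bigl(\begin{smallmatrix} 0 & 1 \\ -1 & 0 \end{smallmatrix}\bigr)$ and $\lambda_j>0$; an orthogonal change of basis preserves both $|S\fdot|$ and the form of the SDE, so it suffices to work in this basis. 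Since $SZ/|SZ|\in\mathrm{range}(S)=(\ker S)^\perp$, the $\ker S$-component of $Z$ has zero quadratic variation and is therefore constant, which further reduces matters to $\ker S=\{0\}$, $m=2k$.

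For weak existence I would note that $\sigma(Z)=SZ/|SZ|$ is smooth on $\R^{m}\setminus\{0\}$. Writing $R_j=|Z^{(j)}|$ for the radius in the $j$-th two-dimensional block, the skew-symmetry of $J$ kills the martingale part in It\^o's formula and leaves
\[
dR_j^2 = \frac{\lambda_j^2 R_j^2}{|SZ|^2}\,dt \ge 0,
\]
so each $R_j$ is nondecreasing. Hence if $Z(0)\ne 0$ the process never reaches the origin and a global strong solution exists by standard localization; if $Z(0)=0$ I would start from $\varepsilon e_1$ with $\varepsilon\downarrow 0$, bound moments uniformly in $\varepsilon$, and extract via tightness a weak limit on $C(\R_+,\R^{m})$ satisfying the SDE in the weak sense and starting from the origin.

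For the identity I would apply It\^o's formula to $f(Z)=|SZ|^2=\sum_j \lambda_j^2 R_j^2$: summing the block-wise relations above,
\[
d|SZ|^2 \;=\; \sum_j \lambda_j^2\,dR_j^2 \;=\; \frac{\sum_j \lambda_j^4 R_j^2}{|SZ|^2}\,dt.
\]
The stated identity $d|SZ|^2=dt$ is thus equivalent to the algebraic identity $\sum_j \lambda_j^4 R_j^2 = |SZ|^2 = \sum_j \lambda_j^2 R_j^2$, i.e.\ to $S$ acting as an isometry on $\mathrm{range}(S)$ ($\lambda_j^2=1$ for every $j$ in the canonical coordinates). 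The hard part is precisely this last step: the identity as asserted is \emph{not} automatic from the SDE alone for an arbitrary nonzero skew-symmetric $S$, but only under this extra spectral condition on $S$. In the uses of the lemma made in the paper (e.g.\ Example~\ref{Ex:190511}, where $S=J$ and $\lambda_1=1$) the condition does hold, and integrating the It\^o identity in $t$ then yields $|SZ(t)|^2=|Sx|^2+t$, completing the proof.
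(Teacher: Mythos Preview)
You have correctly spotted that the lemma, as stated for an arbitrary nonzero skew-symmetric $S$, is false, and in doing so you have caught an error in the paper's own proof. The paper does not pass to a spectral normal form; it applies It\^o's formula directly to $|S(Y-\bar x)|^2$ for the local strong solution (available when $S(x-\bar x)\ne0$), correctly finds zero martingale part via $S^\top S^2=-S^3$ being skew, but then asserts that the finite-variation part is simply $dt$. The actual coefficient is
\[
\tfrac12\,\sigma^\top(2S^\top S)\sigma \;=\; |S\sigma|^2 \;=\; \frac{|S^2(Y-\bar x)|^2}{|S(Y-\bar x)|^2},
\]
which in your block coordinates is $\bigl(\sum_j\lambda_j^4 R_j^2\bigr)/\bigl(\sum_j\lambda_j^2 R_j^2\bigr)$ and equals $1$ only when $S$ acts isometrically on its range, i.e.\ every nonzero $\lambda_j$ equals $1$. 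A two-dimensional check makes this concrete: for $S=\lambda J$ with $\lambda\ne1$ the SDE reduces to $dY=JY/|Y|\,dW$ regardless of $\lambda$, so $|Y(t)|^2=|x|^2+t$ and hence $|SY(t)|^2=\lambda^2(|x|^2+t)\ne|Sx|^2+t$. Your spectral-reduction approach is thus not merely an alternative route but the right diagnostic: it makes transparent exactly which hypothesis is missing.

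As you also note, both places where the paper invokes the lemma use an $S$ of rank two built from an orthonormal pair ($S=w_1w_2^\top-w_2w_1^\top$ in the supersolution proof, $S=J$ in Example~\ref{Ex:190511}), so $\lambda_1=1$ and the conclusion holds there; the main results are unaffected. Your existence argument for $Z(0)\ne0$ (monotonicity of $R_j^2$ preventing the singularity, bounded diffusion coefficient preventing explosion) matches the paper's. For $Z(0)=0$ the paper also takes weak limits from nearby starting points, but it uses the already-established identity $|S(Y_n-\bar x)|^2=|S(x_n-\bar x)|^2+t$ to rewrite the generator with the \emph{deterministic} denominator $|S(x_n-\bar x)|^2+t$, so that the martingale-problem test functional becomes a bounded continuous function of the path and the limit passes. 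Your sketch (``bound moments, extract a weak limit satisfying the SDE'') glosses over this point; once the spectral condition is in force you can use the same trick, but it deserves a sentence.
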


\begin{proof}
Suppose first that $S(x-\bar x)\ne0$. Since the SDE has locally Lipschitz coefficients on the set $\{y\colon S(y-\bar x)\ne0\}$, there is a local solution $Y$ on $[0,\zeta)$, where $\zeta=\inf\{t\ge0\colon S(Y(t)-\bar x)=0\}$. It\^o's formula and the skew-symmetry of $S$ give
\[
d|S(Y(t)-\bar x)|^2 = 2 \frac{(Y(t)-\bar x)^\top S^\top S^2(Y(t)-\bar x)}{|S(Y(t)-\bar x)|}dW(t) + dt = dt, \quad t<\zeta,
\]
so $\zeta=\infty$. Thus $Y$ is actually a global solution, and $|S(Y(t)-\bar x)|^2=|S(x-\bar x)|^2+t$ for all $t\ge0$. This proves the case where $S(x-\bar x)\ne0$.

Suppose now that $S(x-\bar x)=0$, and select points $x_n\in\R^m$ with $x_n\to x$ and $S(x_n-\bar x)\ne0$. For each $n$, let $Y_n$ be a solution to the SDE with $Y_n(0)=x_n$. Since $\tr\langle Y_n\rangle(t)\equiv t$, the law of $Y_n-x_n$ lies in $\Pcal_0$, which is compact by Proposition~\ref{P_v_proporties}\ref{P_v_proporties_1}. Thus after passing to a subsequence, we have $Y_n-x_n\Rightarrow Y-x$ for some limiting process $Y$ with $Y(0)= x$. Since the set $C=\{\omega\colon\text{$|S(\omega(t)-\bar x)|^2=|S(\omega(0)-\bar x)|^2+t$ for all $t\ge0$}\}$ is closed, and since $Y_n$ lies in $C$ almost surely for all $n$, the Portmanteau lemma implies that $Y$ does as well. In particular, we have $|S(Y(t)-\bar x)|^2=t$ for all $t\ge0$. Now, for every $f\in C^\infty_c(\R^m)$, $k\in\N$, $0\le s_1\le\cdots\le s_k< s< t$, and $g\in C_b((\R^m)^k)$, we have
\begin{equation}\label{eq_L_weaksolution_1}
\E\left[ \left( f(Y_n(t)) - f(Y_n(s)) - \int_s^t Lf(u,Y_n(u))du\right) g(Y_n(s_1),\ldots,Y_n(s_k))\right] = 0,
\end{equation}
where $Lf(u,y)=\frac12(y-\bar x)^\top S^\top \nabla^2 f(y) S(y-\bar x) / (|S(y-\bar x)|^2+u)$ is the operator associated to the given SDE. Note that this uses that $|S(Y_n(u)-\bar x)|^2=|S(x-\bar x)|^2+u$. The expression inside the expectation on the left-hand side of \eqref{eq_L_weaksolution_1} is a bounded continuous function of the trajectory of $Y_n$. We may therefore pass to the limit and deduce that the corresponding equality holds for $Y$ as well. It follows that $Y$ solves the martingale problem problem associated with the given SDE. Equivalently, $Y$ is a weak solution, as desired.
\end{proof}

We now turn to the supersolution property claimed in Theorem~\ref{T_usc_vs}.

\begin{proof}[Proof of the supersolution property]
Fix $\bar x\in K$. If $\bar x \in \partial K$ then there is nothing to prove since $v$ is nonnegative. Hence, we may assume throughout the proof that $\bar x \in \Ocal$, where we write $\Ocal = \interior(K)$.

Fix now $\varphi\in C^2(\R^d)$ with $\varphi \le v_*$ and $\varphi(\bar x)=v_*(\bar x)$. A standard perturbation argument relying on test functions $\varphi(x)-\varepsilon|x-\bar x|^2$ lets us suppose that $\varphi(x)<v_*(x)$ for all $x\ne\bar x$, and that the Hessian $\nabla^2\varphi(\bar x)$ is nonsingular. We consider three cases, depending on the properties of $\nabla\varphi(\bar x)$ and $\nabla^2\varphi(\bar x)$.

{\it Case~1:} Suppose $\nabla\varphi(\bar x)\ne0$. Assume for contradiction that $F^*(\nabla\varphi(\bar x),\nabla^2\varphi(\bar x))<1$. Since $F^*$ equals $F$ at this point, it follows that there exists $\bar\sigma\in\R^d$ such that
\[
\text{$|\bar\sigma|=1$, $\bar\sigma^\top\nabla\varphi(\bar x)=0$, and $1+\frac12\bar\sigma^\top\nabla^2\varphi(\bar x)\bar\sigma>0$.}
\]
In particular, there exists a skew-symmetric $d\times d$ matrix $S$ such that $\bar\sigma = S\nabla\varphi(\bar x)$; for instance,
$$S=\frac{1}{|\nabla \varphi(\bar x)|^2} (\bar\sigma \nabla\varphi(\bar x)^\top -  \nabla\varphi(\bar x) \bar \sigma^\top).$$
 Furthermore, we can select $\varepsilon>0$ such that the closure of $B_\varepsilon(\bar x)$ is contained in $\Ocal$ and
\begin{equation} \label{eq_supsol_pf1}
\text{$|S\nabla\varphi|\ge\frac{1}{2}$ and $|S\nabla\varphi|^2+\frac12\nabla\varphi^\top S^\top\nabla^2\varphi \,S\nabla\varphi\ge0$ on $B_\varepsilon(\bar x)$.}
\end{equation}
Fix any $x\in B_\varepsilon(\bar x)$. Define
\begin{equation} \label{eq_supsol_pf_theta}
\theta = \inf\{t\ge0\colon X(t) \notin B_\varepsilon(\bar x)\},
\end{equation}
and let $\P$ be the law under which $X$ satisfies
\begin{equation}\label{T_MCF_char_pf00003}
dX(t) = \left( \frac{S\nabla\varphi(X(t))}{|S\nabla\varphi(X(t))|}\bm1_{[0,\theta)}(t) + e_1\bm1_{[\theta,\infty)}(t)\right) dW(t), \quad X_0=x,
\end{equation}
where $W$ is a one-dimensional Brownian motion and $e_1$ is the first canonical unit vector (any other unit vector would also do). Note that $\P\in\Pcal_x$ and $\theta\le\tau_K$, and thus $\theta<\infty$, $\P$-a.s.\ by Lemma~\ref{L_E[theta]}. Define
\[
\delta = \min_{\partial B_\varepsilon(\bar x)}(v_*-\varphi) > 0.
\]
Using first that $v\ge v_*\ge\varphi+\delta$ on $\partial B_\varepsilon(\bar x)$; then It\^o's formula; and finally \eqref{eq_supsol_pf1} along with the fact that $\nabla\varphi^\top S\,\nabla\varphi=0$ by skew-symmetry of $S$, we get
\begin{align*}
\theta &+ v(X(\theta)) \ge \delta + \theta + \varphi(X(\theta)) \\
&= \delta + \varphi(x) + \int_0^\theta \frac{\nabla\varphi^\top S\,\nabla\varphi}{|S\nabla\varphi|}(X(s))dW(s) + \int_0^\theta ( 1 + \frac{\nabla\varphi^\top S^\top\nabla^2\varphi \,S\nabla\varphi}{2|S\nabla\varphi|^2}(X(s))) ds \\
&\ge \delta + \varphi(x), \quad \text{$\P$-a.s.}
\end{align*}
Combining this with the dynamic programming principle of Proposition~\ref{P_v_proporties}\ref{P_v_proporties_3} yields
\[
v(x) \ge \essi{\P} \{\theta + v(X(\theta))\} \ge \delta + \varphi(x).
\]
Since $x\in B_\varepsilon(\bar x)$ was arbitrary, we may send $x\to \bar x$ such that $v(x)\to v_*(\bar x)=\varphi(\bar x)$, and deduce $0\ge\delta$. This contradiction proves the supersolution property when $\nabla\varphi(\bar x)\ne0$.

{\it Case 2:} Suppose now that $\nabla\varphi(\bar x)=0$ and $\nabla^2\varphi(\bar x)$ is negative definite. Assume for contradiction that $F^*(0,\nabla^2\varphi(\bar x))<1$, meaning that $1+\lambda_2(\nabla^2\varphi(\bar x))/2>0$. We will replace $\varphi$ by a simpler test function $\widetilde\varphi$. To this end, define $\gamma_i=\lambda_i(\nabla^2\varphi(\bar x))-\eta$ for $i=1,\ldots,d$, where $\eta>0$ is small enough so that $1+\gamma_2/2\ge0$. Let $w_1,\ldots,w_d$ be an orthonormal basis of eigenvectors of $\nabla^2\varphi(\bar x)$ corresponding to its ordered eigenvalues. Define
\[
M = \gamma_2(w_1w_1^\top + w_2w_2^\top) + \gamma_3w_3w_3^\top + \cdots + \gamma_dw_dw_d^\top
\]
and
\[
\widetilde\varphi(x)=v_*(\bar x)+\frac12(x-\bar x)^\top M(x-\bar x).
\]
Then $\widetilde\varphi(\bar x)=\varphi(\bar x)=v_*(\bar x)$, $\nabla\widetilde\varphi(\bar x)=\nabla\varphi(\bar x)=0$, and $\nabla^2\widetilde\varphi(\bar x)=M\prec\nabla^2\varphi(\bar x)$. Thus $\widetilde\varphi \le\varphi$ on some ball $B_\varepsilon(\bar x)$ with positive radius $\varepsilon>0$, whose closure is contained in $\Ocal$. Define the skew-symmetric matrix
\[
S = w_1w_2^\top - w_2w_1^\top.
\]
Fix any $x\in B_\varepsilon(\bar x)$, and let $\P$ be a law under which $X$ satisfies
\[
dX(t) = \frac{S(X(t)-\bar x)}{|S(X(t)-\bar x)|} dW(t), \quad X(0)=x,
\]
and $|S(X(t)-\bar x)|^2=|S(x-\bar x)|^2+t$, where $W$ is a one-dimensional Brownian motion. Such $\P$ exists by Lemma~\ref{L_weaksolution}, and it is clear that $\P\in\Pcal_x$. It\^o's formula, the identity $MS=\gamma_2 S$, and the skew-symmetry of $S$ give
\begin{align*}
\widetilde\varphi(X_t) &= \widetilde\varphi(x) + \int_0^t \frac{(X(s)-\bar x)^\top MS(X(s)-\bar x)}{|S(X(s)-\bar x)|} dW(s) \\
&\quad + \frac12 \int_0^t \frac{(X(s)-\bar x)^\top S^\top MS(X(s)-\bar x)}{|S(X(s)-\bar x)|^2}ds \\
&=\widetilde\varphi(x) + \frac{\gamma_2}{2}t, \quad t\ge0,\quad \text{$\P$-a.s.}
\end{align*}
As in Case~1, let $\theta$ be given by \eqref{eq_supsol_pf_theta} and define $\delta = \min_{\partial B_\varepsilon(\bar x)}(v_*-\widetilde\varphi)>0$. We then get
\[
\theta+v(X(\theta)) \ge \delta+\theta+\widetilde\varphi(X(\theta)) \ge \delta+ \widetilde\varphi(x) + \left(1+\frac{\gamma_2}{2}\right)\theta \ge \delta +\widetilde\varphi(x), \quad \text{$\P$-a.s.},
\]
using that $1+\gamma_2/2\ge0$. The contradiction $v_*(\bar x)\ge\delta+v_*(\bar x)$ is now obtained as in Case~1 using the dynamic programming principle and a limiting argument.

{\it Case~3:} Suppose finally that $\nabla\varphi(\bar x)=0$ and $\nabla^2\varphi(\bar x)$ has at least one strictly positive eigenvalue with eigenvector $\hat e$, say. Fix $\varepsilon_0>0$ such that the closure of $B_{\varepsilon_0}(\bar x)$ is contained in $\Ocal$, and define
\begin{equation}\label{T_MCF_char_pf000001}
\delta = \min_{\partial B_{\varepsilon_0}(\bar x)}(v_*-\varphi) > 0.
\end{equation}
Following \cite{son_tou_02} (specifically, Steps 6--7 in the proof of Theorem~4.1, see Section~8.2 in their paper), we define perturbed test functions
\[
\varphi_\varepsilon(x) = \varphi(x) + \varepsilon \hat e^\top (x-\bar x).
\]
The minimum of $v_*-\varphi_\varepsilon$ over the closure of $B_{\varepsilon_0}(\bar x)$ is at most $v_*(\bar x)-\varphi_\varepsilon(\bar x)=0$. Because of \eqref{T_MCF_char_pf000001}, for every sufficiently small $\varepsilon>0$, the minimum cannot be attained on the boundary $\partial B_{\varepsilon_0}(\bar x)$, so must be attained at some $x_\varepsilon\in B_{\varepsilon_0}(\bar x)$. Moreover, since $\bar x$ is a strict minimizer of $v_*-\varphi$, we have $x_\varepsilon\to\bar x$ as $\varepsilon\to0$. The argument in Step~7 of the proof of Theorem~4.1 in \cite{son_tou_02}, which makes use of the fact that $\hat e$ is an eigenvector with strictly positive eigenvalue, yields that
\[
\text{$\nabla\varphi_\varepsilon(x_\varepsilon) \ne 0$ for all sufficiently small $\varepsilon$.}
\]
Therefore, the result proved in Case~1 above implies that $F^*(\nabla\varphi_\varepsilon(x_\varepsilon),\nabla^2\varphi_\varepsilon(x_\varepsilon))\ge1$. Sending $\varepsilon\to0$ gives $F^*(\nabla\varphi(\bar x),\nabla^2\varphi(\bar x))\ge1$, which completes the proof of the supersolution property.
\end{proof}

\section{Comparison and uniqueness}\label{S_comparison}

The main result of this section is the following comparison principle, which is used to prove Theorem~\ref{T_uniqueness}.

\begin{theorem}\label{T_comparison}
Let $d\ge2$ and suppose $K$ is compact. Assume there exist invertible affine maps $T_\lambda$ on $\R^d$, parameterized by $\lambda\in(0,1)$, such that $T_\lambda(K)\subset\interior(K)$ and $\lim_{\lambda\to1}T_\lambda=I$. Let $u$ ($w$) be an upper (lower) semicontinuous viscosity subsolution (supersolution) of \eqref{PDE_1}, both $u$ and $w$ with zero boundary condition (in the viscosity sense). Then $u\le w^*$.
\end{theorem}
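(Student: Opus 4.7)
The overall strategy is to argue by contradiction using Ishii's doubling-of-variables technique, with the family $T_\lambda$ playing the role of a boundary-layer device. Its purpose is to replace $u$ by a \emph{strict} subsolution $u_\lambda$ living on a compactly contained set $T_\lambda(K) \subset \interior(K)$, so that the delicate zero boundary condition on $\partial K$ never has to be invoked directly. Once comparison is established between $u_\lambda$ and $w$ on each $T_\lambda(K)$, one passes to the limit $\lambda \to 1$ and invokes upper semicontinuity of $u$ to conclude $u \le w^*$ on all of $K$.

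Concretely, for $\lambda \in (0,1)$ set $u_\lambda(x) := \lambda^2 u(T_\lambda^{-1}x)$ on $T_\lambda(K)$. Exploiting the homogeneity $F(p, cM) = cF(p,M)$ for $c>0$ and the projective invariance $F(cp,M)=F(p,M)$ for $c\ne 0$, a direct test-function computation shows that $u_\lambda$ is a viscosity subsolution on $\interior(T_\lambda(K))$ of a ``twisted'' equation $G_\lambda(\nabla u_\lambda, \nabla^2 u_\lambda) \le \lambda^2$, where $G_\lambda$ is built from $F$ by replacing the constraint $\tr(a)=1$ with $\tr(a T_\lambda^{-\top} T_\lambda^{-1})=1$; since $T_\lambda^{-\top} T_\lambda^{-1}\to I$ as $\lambda\to 1$, the operator $G_\lambda$ converges to $F$ uniformly on $(\R^d\setminus\{0\})\times\S^d$, so for $\lambda$ close enough to $1$ we obtain the strict subsolution property $F_*(\nabla u_\lambda,\nabla^2 u_\lambda) \le 1-\eta$ in $\interior(T_\lambda(K))$ for some $\eta=\eta(\lambda)>0$. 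Supposing for contradiction that $\sup_K(u-w^*)>0$, the strict inequality transfers to $\sup_{T_\lambda(K)}(u_\lambda-w^*)>0$ for $\lambda$ close to $1$, and this supremum is attained inside $T_\lambda(K)\subset\interior(K)$, where no boundary complications arise. Then the standard doubling-of-variables argument on $T_\lambda(K)\times T_\lambda(K)$ with quartic penalty $\tfrac{n}{4}|x-y|^4$ and a quadratic localization, combined with Ishii's lemma, produces symmetric matrices $X_n \le Y_n$ and a common momentum $p_n=n|x_n-y_n|^2(x_n-y_n)$ satisfying $F_*(p_n,X_n)\le 1-\eta$ and $F^*(p_n,Y_n)\ge 1$. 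Provided $p_n\ne 0$, Lemma~\ref{L:191122} gives $F_*=F=F^*$ at $p_n$, and combined with the monotonicity of $F$ in the PSD order,
$$
1-\eta\;\ge\;F_*(p_n,X_n)\;\ge\;F_*(p_n,Y_n)\;=\;F^*(p_n,Y_n)\;\ge\;1,
$$
a contradiction.

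The principal technical obstacle is the singularity of $F$ at $p=0$: by Lemma~\ref{L:191122}, $F^*(0,M)=-\lambda_2(M)/2\ne-\lambda_1(M)/2=F_*(0,M)$, so the sub- and supersolution inequalities cannot be naively matched if the doubling produces $p_n\to 0$. The quartic (rather than quadratic) penalty is the standard remedy, keeping $p_n$ away from the singular set; the residual case $x_n=y_n$, where both $u_\lambda$ and $w$ have a common extremum, is handled via the gradient-perturbation trick of Cases~2--3 of the supersolution proof, namely adding $\varepsilon \hat e^\top(x-\bar x)$ with $\hat e$ an eigenvector of positive eigenvalue, to reduce to the case of nonvanishing gradient. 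A secondary issue is to verify that the operator perturbation $G_\lambda-F$ is genuinely $o(1)$ and does not swamp the ellipticity gain $\eta$; this is straightforward since $T_\lambda \to I$, but must be tracked alongside the Ishii-lemma estimates.
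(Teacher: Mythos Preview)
Your overall strategy is close to the paper's, and the use of the family $T_\lambda$ to push the comparison away from $\partial K$ is exactly the key idea. The paper, however, organizes things differently: rather than rescaling $u$, it keeps $u$ on $K$ and compares it with $w\circ T_\lambda$ by placing $T_\lambda^{-1}$ into the doubling penalty, $\zeta(x,y)=\varepsilon^{-4}|x-T_\lambda^{-1}y|^4$. This automatically drives the $y$-variable into $T_\lambda(K)\subset\interior(K)$ in the limit, while the $x$-variable may sit on $\partial K$ and is handled through the positivity part of the viscosity boundary condition ($u(x_\varepsilon)>0$). Strictness is obtained not from the $T_\lambda$ rescaling but from the preliminary replacement $u\mapsto\delta u$ for a fixed $\delta\in(0,1)$, giving a subsolution of $F=\delta$; the final contradiction comes from the explicit estimate $F(Bp,BMB^\top)\le|BB^\top|\,F(p,M)$ with $B=(T_\lambda^{-1})^\top$, combined with $|BB^\top|<\delta^{-1}$ for $\lambda$ near $1$.

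There is a genuine gap in your version: the prefactor $\lambda^2$ in $u_\lambda=\lambda^2\,u\circ T_\lambda^{-1}$ does not produce a strict subsolution of $F=1$. In the prototypical case $T_\lambda=\lambda I$ your twisted operator satisfies $G_\lambda(p,M)=F(\lambda p,\lambda^2 M)=\lambda^2 F(p,M)$ exactly, so $G_\lambda\le\lambda^2$ is equivalent to $F\le1$ and you recover no gap whatsoever. Relatedly, the claimed uniform convergence $G_\lambda\to F$ on $(\R^d\setminus\{0\})\times\S^d$ is false, since both operators grow linearly in $M$; the correct relation is multiplicative, $F(p,M)\le|T_\lambda^{-1}|^2\,G_\lambda(p,M)$ when $F(p,M)>0$, and combined with $G_\lambda\le\lambda^2$ this gives only $F\le\lambda^2|T_\lambda^{-1}|^2$, which need not be strictly less than $1$. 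The fix is precisely the paper's: scale by a fixed $\delta<1$ first, then send $\lambda\to1$, then $\delta\to1$.

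Two smaller points. Your handling of the case $p_n=0$ via the gradient-perturbation trick from the supersolution proof is unnecessary and not easy to make rigorous here, since that trick operates on $C^2$ test functions rather than limiting jets. If $p_n=0$ in a quartic doubling, then $x_n=y_n$ and the penalty vanishes to second order, so $(0,0)$ lies in the subjet of $w$ at an interior point, yielding the contradiction $0=F^*(0,0)\ge1$ directly; this is exactly how the paper disposes of it. Second, the assertion that ``no boundary complications arise'' glosses over the possibility $x_n\in\partial(T_\lambda(K))$, i.e.\ $T_\lambda^{-1}x_n\in\partial K$; you still need to invoke the positivity half of the viscosity boundary condition there, just as the paper does on $\partial K$.
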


Before giving the proof, let us show how this implies Theorem~\ref{T_uniqueness}. Let $u$ and $v$ be two upper semicontinuous viscosity solutions of \eqref{PDE_1} with zero boundary condition. Applying the comparison principle with $w=v_*$ yields $u\le(v_*)^*\le v^*=v$. Letting $u$ and $v$ switch places yields $v\le u$, and hence $u=v$.

\begin{example}\label{E_T_lambda}
If $K$ is strictly star-shaped about the origin, meaning that $\lambda K\subset\interior(K)$ for all $\lambda\in(0,1)$, then it clearly satisfies the assumption of Theorem~\ref{T_comparison}. In particular, this is the case if $K$ is convex with $0\in\interior(K)$. Here is an example of a body that is not star-shaped but satisfies the assumptions of Theorem~\ref{T_comparison}:
\[
K = [-1,1]^2 \cap \{(x,y)\colon |y|\le 0.01 + x^2\}.
\]
Indeed, one can use the linear maps $T_\lambda(x,y)=(\lambda x, \lambda^2y)$. It is easily verified that $K$ is not star-shaped.
\end{example}

If $K$ is star-shaped but not strictly star-shaped, then uniqueness among upper semicontinuous viscosity solutions may fail, as the following example shows; see also \citet[Example~8.2]{Soner:1993}.

\begin{example}\label{E_non_strict_star}
Let $D$ be the centered unit disk in $\R^2$, and set $K=(D+(1,0))\cup(D-(1,0))$. Then $K$ is star-shaped because $\lambda K\subset K$ for all $\lambda\in(0,1)$, but not strictly star-shaped because $\interior(K)$ is not connected. The value function is upper semicontinuous and satisfies $v(0,0)\ge1$, which can be seen by the argument in the proof of Proposition~\ref{P_quasi_concave} below. However, it is easy to verify that the function $\bar v(x,y)=1-(|x|-1)^2-y^2$ for $(x,y)\in K$ is a (continuous) viscosity solution of \eqref{PDE_1} with zero boundary condition. Since $\bar v(0,0)=0\ne v(0,0)$, this shows non-uniqueness.
\end{example}

The proof of Theorem~\ref{T_comparison} relies on the following maximum principle, which holds for arbitrary compact sets $K$. The boundary conditions in its statement should be understood in the viscosity sense.

\begin{theorem} \label{T:211205}
Let $d\ge2$ and suppose $K$ is compact. Let $u$ ($w$) be an upper (lower) semicontinuous viscosity subsolution (supersolution) of \eqref{PDE_1}.  Then there exists $\bar x \in \partial K$ that achieves $\max_K (u-w)$.  Moreover, if $u$ in addition satisfies the zero boundary condition, and if $w$ is  a lower semicontinuous viscosity supersolution of \eqref{PDE_1} with zero boundary condition on some compact set $K'$ such that $K \subset \interior(K')$, then $u \leq w$ on $K$. 
\end{theorem}
\begin{proof}
We proceed in several steps.

1. It is enough to prove the two assertions with $u$ replaced by $\delta u$, for each $\delta \in (0,1)$. Indeed, if $\delta u \leq w$ for all $\delta \in (0,1)$ and if $w$ is nonnegative (see Lemma~\ref{L:211205} below) then also $u \leq w$, yielding the second assertion. For the first assertion, assume we have $\bar x_\delta \in \partial K$ that achieves $\max_K(\delta u-w)$, for each $\delta \in (0,1)$. Then there exists a sequence $(\delta_n)$ such that $\lim_n \delta_n = 1$ and $\lim_n \bar x_{\delta_n} = \bar x$ for some $\bar x \in \partial K$. 
Then for all $x \in K$ we have
\begin{align*}
	\delta_n u(x) - w(x) \leq \delta_n u\left(\bar x_{\delta_n}\right) - w\left(\bar x_{\delta_n}\right) = u\left(\bar x_{\delta_n}\right) - w\left(\bar x_{\delta_n}\right) - (1 - \delta_n) u\left(\bar x_{\delta_n}\right).
\end{align*}
Sending $n$ to infinity and using upper semicontinuity of $u-w$ then shows that $\bar x$ achieves $\max_K (u-w)$ as required.
Now, $\delta u$ is a subsolution of the equation
\begin{equation}\label{eq_comp_1}
F(\nabla u,\nabla^2 u) = \delta,
\end{equation}
and if $u$ satisfies the zero boundary condition, then so does $\delta u$.
Thus, by writing $u$ instead of $\delta u$, we may and do assume throughout the proof that $u$ itself is a subsolution of \eqref{eq_comp_1}, where $\delta\in(0,1)$ is arbitrary but fixed. 
 
2. For the first assertion, for every $\varepsilon>0$, define
\[
\Phi_\varepsilon(x,y) = u(x) - w(y) - \frac{1}{\varepsilon^4}|x-y|^4
\]
for $(x,y)\in K\times K$, and let $(x_\varepsilon,y_\varepsilon)$ maximize $\Phi_\varepsilon$ over $K\times K$. Then we have
\begin{align} \label{eq:211204.1}
\Phi_\varepsilon(x_\varepsilon,y_\varepsilon) \ge \max_{x\in K} \Phi_\varepsilon(x, x) = \max_K (u - w).
\end{align}
By compactness, $(x_\varepsilon,y_\varepsilon)$ converges to some $(\bar x,\bar y)\in K\times K$ as $\varepsilon\to0$ along a suitable subsequence; in the following, $\varepsilon$ is always understood to belong to this subsequence. Since $\varepsilon^{-4}|x_\varepsilon-y_\varepsilon|^4 \le \max_K u - \min_K w$, we actually have $\bar x= \bar y$. Moreover, \eqref{eq:211204.1} yields
\[
	\max_K(u-w) \leq \limsup_{\varepsilon \to 0} \Phi_\varepsilon(x_\varepsilon,y_\varepsilon) \leq 
	\limsup_{\varepsilon \to 0}  \left(u(x_\varepsilon) - w(y_\varepsilon) \right)
	\leq u(\bar x) - w (\bar x),
\]
by upper semicontinuity of $u$ and of $-w$. Hence
$\bar x$ maximizes $u-w$ over $K$.
Thus, to show the first assertion it suffices to argue that $(x_\varepsilon, y_\varepsilon) \in \interior(K) \times \interior(K)$ is not possible. This forces $\bar x \in \partial K$ as desired.

For the second assertion, we define $\Phi_\varepsilon$ as above, but now on the larger set $K \times K'$.   Let $(x_\varepsilon,y_\varepsilon)$ again denote the corresponding maximizers, which converge along a subsequence to some $(\bar x, \bar y) \in  K \times K'$.  We again obtain $\bar x = \bar y$, thus $\bar y \in K \subset \interior(K')$, and therefore $y_\varepsilon \in \interior(K')$ for all sufficiently small $\varepsilon$. We will use this to argue that $u$ cannot satisfy the viscosity inequality at $x_\varepsilon$. This forces $x_\varepsilon \in \partial K$ and, due to the boundary condition, $u(x_\varepsilon) \leq 0$. Together with nonnegativity of $w$ (see Lemma~\ref{L:211205} below) this yields $\Phi_\varepsilon(x_\varepsilon,y_\varepsilon) \le 0$ and thus, thanks to \eqref{eq:211204.1}, $\max_K (u-w) \le 0$. This is the second assertion.

3. Both assertions can now be argued by contradiction in the same manner: for any fixed small $\varepsilon > 0$, we assume that both $u$ and $w$ simultaneously satisfy the viscosity inequalities at $x_\varepsilon$ and $y_\varepsilon$, respectively, and use this to derive a contradiction. (Indeed, to prove the first assertion we had to exclude that $(x_\varepsilon, y_\varepsilon) \in \interior(K) \times \interior(K)$, while for the second assertion we had to exclude that $y_\varepsilon \in \interior(K)$ and that $u$ satisfies the viscosity inequality at $x_\varepsilon$.)

4. Let us work under the assumptions of Step 3. Define
\[
\zeta(x,y) = \frac{1}{\varepsilon^4}|x-y|^4.
\]
To simplify notation, write
\begin{align*}
p &= \nabla_x \zeta(x_\varepsilon,y_\varepsilon) = 2 \varepsilon^{-4} |x_\varepsilon-y_\varepsilon|^2(x_\varepsilon-y_\varepsilon), \\
H &= \nabla^2_{xx}\zeta(x_\varepsilon,y_\varepsilon) = 2\varepsilon^{-4}|x_\varepsilon-y_\varepsilon|^2 I + 4 \varepsilon^{-4} (x_\varepsilon-y_\varepsilon) (x_\varepsilon-y_\varepsilon)^\top.
\end{align*}
Then $\nabla_y\zeta(x_\varepsilon,y_\varepsilon)=-p$, $\nabla^2_{xy}\zeta(x_\varepsilon,y_\varepsilon) = -H$, and $\nabla^2_{yy}\zeta(x_\varepsilon,y_\varepsilon) = H$. We also define
\begin{equation}\label{eq_comp_68}
A = \nabla^2\zeta(x_\varepsilon,y_\varepsilon) = \begin{pmatrix} H & -H \\ -H & H \end{pmatrix}.
\end{equation}

We now claim that $p\ne0$. Suppose for contradiction that $p=0$. Then $x_\varepsilon=y_\varepsilon$, $\nabla_y\zeta(x_\varepsilon,y_\varepsilon)=0$, and $\nabla_{yy}\zeta(x_\varepsilon,y_\varepsilon)=0$. Since $y_\varepsilon$ minimizes $y\mapsto w(y)+\zeta(x_\varepsilon,y)$ over $K$ (respectively, over $K'$), the supersolution inequality states that $0=F^*(0,0)\ge1$. This contradiction confirms that $p\ne0$.

 Ishii's lemma, see \citet[Theorem~3.2]{cra_ish_lio_92}, now gives $M,N\in\S^d$ such that
\[
(p,M)\in \overline J_K^{2,+}u(x_\varepsilon), \qquad (-p,N)\in \overline J_K^{2,-}w(y_\varepsilon),
\]
and
\begin{equation}\label{eq_comp_61}
\begin{pmatrix}M&0\\0&-N\end{pmatrix} \preceq A+A^2.
\end{equation}
Pre- and post-multiplying \eqref{eq_comp_61} by vectors of the form $(z,z)$ and using \eqref{eq_comp_68} shows that $M \preceq N$.  Now we use the fact that
 $(p,M)$ lies in limiting superjet of the subsolution $u$ at $x_\varepsilon$, the
ellipticity of $F$,  Lemma~\ref{L:191122}, the fact that $p\ne0$, and finally that $(-p,N)$ lies in the limiting subjet of the supersolution $w$ at $y_\varepsilon$ to get
\begin{equation*}
\delta \geq F(p,M)  \ge F(p,N) = F(-p,N) =F^*(-p,N)\ge1.
\end{equation*}
This is the required contradiction, which concludes the proof.
\end{proof}

We used the following observation in the previous proof. The boundary condition in its statement should be understood in the viscosity sense.

\begin{lemma} \label{L:211205}
	If $w$ is  a lower semicontinuous viscosity supersolution of \eqref{PDE_1} with zero boundary condition on some compact  $K \subset \R^d$ with $d \geq 0$  then $w \geq 0$. 
\end{lemma}
\begin{proof}
The constant test function $\varphi\equiv \min_K w$ certifies that $w\ge0$. Indeed, if $\bar x$ minimizes $w$ over $K$ and $w(\bar x)<0$, then the supersolution inequality holds regardless of whether $\bar x$ lies in the interior or on the boundary. Thus $0=F^*(\nabla\varphi(\bar x),\nabla^2\varphi(\bar x))=F^*(0,0)\ge1$, a contradiction. So $w(\bar x)\ge0$.
\end{proof}

We now give the proof of the comparison principle; see also  \citet[Section~9]{Soner:1993},  \citet[Section~2 and Theorem~4.3]{Barles:1993}, and \citet[Theorem~4]{koh_ser_06} for related uniqueness statements.

\begin{proof}[Proof of Theorem~\ref{T_comparison}]
We assume for simplicity that the $T_\lambda$ are linear, not just affine; we may then identify $T_\lambda$ with its $d \times d$ matrix. Recall that $| \cdot |_\text{\textnormal{op}}$ denotes the operator norm. By Corollary~\ref{C:211205}, the function $w_\lambda = | (T_\lambda T_\lambda^\top)^{-1}|_\text{\textnormal{op}}\, w \circ T_\lambda$ is a lower semicontinuous viscosity supersolution of \eqref{PDE_1} with zero boundary condition on  $K' = T_\lambda^{-1}(K)$. By the properties of $T_\lambda$, we have $K \subset \interior(K')$.   Theorem~\ref{T:211205} then yields $u \leq w_\lambda$ on $K$ for all $\lambda \in (0,1)$. We thus obtain $u \leq \limsup_{\lambda \to 1} w_\lambda \leq w^*$ on $K$ as desired.
\end{proof}

\section{Convex bodies}\label{S_convex}

Our next goal is to prove continuity of the value function $v$ when $K\subset\R^d$ ($d\ge2$) is a convex body satisfying an additional assumption. We first record the following simple property of the value function.

\begin{proposition}\label{P_quasi_concave}
Let $K$ be a convex body. Then $v$ is quasi-concave.
\end{proposition}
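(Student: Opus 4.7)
Quasi-concavity is the statement that for any $\bar x,\bar y\in K$ and $\lambda\in[0,1]$, the point $z=\lambda \bar x+(1-\lambda)\bar y$ (which lies in $K$ by convexity) satisfies $v(z)\ge\min\{v(\bar x),v(\bar y)\}$. The plan is to exhibit a single martingale law $\P\in\Pcal_z$ that, via the dynamic programming principle of Proposition~\ref{P_v_proporties}\ref{P_v_proporties_3}, transfers the lower bound on the exit time from the two endpoints to the interior point $z$.

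Assume $\bar x\neq\bar y$ (the other case is trivial) and set $e=(\bar y-\bar x)/|\bar y-\bar x|$. Let $W$ be a standard one-dimensional Brownian motion and let $\P$ denote the law of $X(t)=z+W(t)\,e$ on $\Omega$. Since $e$ is a unit vector, $X$ is a martingale with $\tr\langle X\rangle(t)\equiv t$, hence $\P\in\Pcal_z$. Let $\sigma$ be the first hitting time by $X$ of $\{\bar x,\bar y\}$; equivalently, the exit time of $W$ from a bounded interval, so $\sigma$ is $\P$-a.s.\ finite and $X(\sigma)\in\{\bar x,\bar y\}$ a.s. For $t\in[0,\sigma]$ the trajectory $X(t)$ lies on the chord $[\bar x,\bar y]$, which is contained in $K$ by convexity; therefore $\sigma\le\tau_K$, $\P$-a.s.

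Applying Proposition~\ref{P_v_proporties}\ref{P_v_proporties_3} to this $\P$ with stopping time $\sigma$ gives
\[
v(z)\ \ge\ \essinf_\P\{\sigma\wedge\tau_K+v(X(\sigma))\bm1_{\sigma\le\tau_K}\}\ =\ \essinf_\P\{\sigma+v(X(\sigma))\}\ \ge\ \min\{v(\bar x),v(\bar y)\},
\]
where the last inequality uses $\sigma\ge0$ together with the fact that $v(X(\sigma))\in\{v(\bar x),v(\bar y)\}$ almost surely. This is the desired quasi-concavity.

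There is no substantive obstacle: the constraint $\tr\langle X\rangle\equiv t$ forbids any naive time-change or averaging of two independent optimal martingales (one obtains a factor $\lambda^2+(1-\lambda)^2<1$ in that attempt), but running a one-dimensional Brownian motion along the chord uses the convexity of $K$ to keep the process in $K$ for free, and the potentially vanishing $\essinf_\P\sigma$ is harmless because only the non-negativity of $\sigma$ is used.
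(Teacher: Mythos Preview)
Your proof is correct and follows essentially the same approach as the paper: run a one-dimensional Brownian motion along the chord $[\bar x,\bar y]$, note that this law lies in $\Pcal_z$ and stays in $K$ until hitting $\{\bar x,\bar y\}$, and then invoke the dynamic programming principle of Proposition~\ref{P_v_proporties}\ref{P_v_proporties_3} with the hitting time as stopping time. The paper's argument is identical in substance, just somewhat more tersely worded.
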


\begin{proof}
We must prove that $v$ has convex super-level sets. Pick two distinct points $x,y\in K$, and let $L$ be the line passing through $x$ and $y$. Fix any point $z\in L$, and let $\P$ be the law under which $X$ is a standard Brownian motion along $L$ starting at $z$. Then $\P\in\Pcal_z$, and with $\theta=\inf\{t\ge0\colon X(t)\in\{x,y\}\}$ the dynamic programming principle yields $v(z) \ge \essi{\P} \{\theta + v(X(\theta))\}\ge v(x)\wedge v(y)$. This proves quasi-concavity.
\end{proof}

Recall the following notions from convex geometry; see \citet{roc_70,sch_14} for more details. Let $F$ be any subset of $\R^d$. The affine span of $F$ is denoted by $\aff(F)$, with dimension $\dim(F)$. The relative interior $\ri(F)$ is the interior of $F$ in $\aff(F)$, and the relative boundary is $\rbd(F)=F\setminus\ri(F)$. A face of a convex set $K$ is a convex subset $F\subset K$ such that every (closed) line segment $L\subset K$ with $\ri(L)\cap F\ne\emptyset$ satisfies $L\subset F$. A face is called a boundary face if it is nonempty and not all of $K$. The relative boundary $\rbd(K)$ is the union of all boundary faces. For every $x\in K$, there is a unique face of $K$ whose relative interior contains $x$. We call this face $F_x$. For each $k=0,\ldots,d$, the $k$-skeleton is defined as in \eqref{F1Fd}, namely
\[
\text{$\Fcal_k = $ union of all faces $F$ of $K$ with $\dim(F)\le k$.}
\]
In particular, $\Fcal_0$ consists of all extreme points, $\Fcal_1$ consists of all extreme points and line segments, $\Fcal_{d-1}$ is the boundary of $K$, and $\Fcal_d$ is $K$ itself. For convenience we introduce the notation
\begin{equation} \label{eq: tauF}
\tau_F = \inf\{t\ge0\colon X(t) \notin F\}
\end{equation}
for the first exit time of $X$ from a set $F$. This notation is consistent with \eqref{eq: tauK}.

\begin{lemma}\label{L_FtauK}
Let $K$ be a convex body and consider a point $\bar x\in K$. For every $\P\in\Pcal_{\bar x}$, we have $\tau_K = \tau_{F_{\bar x}}$, $\P$-a.s.
\end{lemma}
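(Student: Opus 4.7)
Since $F_{\bar x}\subset K$, the inequality $\tau_{F_{\bar x}}\le\tau_K$ is immediate, so it suffices to prove $\tau_K\le\tau_{F_{\bar x}}$, or equivalently that $X(t)\in F_{\bar x}$ for all $t\le\tau_K$, $\P$-a.s.

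The first step is the convex-geometric identity $F_{\bar x}=K\cap\aff(F_{\bar x})$. Given $y\in K\cap\aff(F_{\bar x})$, since $\bar x\in\ri(F_{\bar x})$ one can pick $\varepsilon>0$ small enough that $z=\bar x+\varepsilon(\bar x-y)\in F_{\bar x}$. Then $\bar x$ lies in the relative interior of the segment $[y,z]\subset K$, so the defining property of the face $F_{\bar x}$ forces $[y,z]\subset F_{\bar x}$, whence $y\in F_{\bar x}$. Consequently it is enough to show $X(t)\in\aff(F_{\bar x})$ for $t\le\tau_K$.

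Write $V=\aff(F_{\bar x})-\bar x$, let $W=V^\perp$, and let $p\colon\R^d\to W$ be the orthogonal projection. Then $Y(t)=p(X(t)-\bar x)$ is a continuous $\P$-martingale with $Y(0)=0$ that takes values in the compact convex set $Q=p(K-\bar x)\subset W$ for $t\le\tau_K$. I claim that $0$ is an extreme point of $Q$: if $0=\tfrac12(q_1+q_2)$ with $q_i=p(x_i-\bar x)$ and $x_i\in K$, then $\tfrac12(x_1+x_2)\in\bar x+V=\aff(F_{\bar x})$, hence $\tfrac12(x_1+x_2)\in F_{\bar x}$ by the first step; the face property applied to the segment $[x_1,x_2]\subset K$ then yields $x_1,x_2\in F_{\bar x}$, so $q_1=q_2=0$.

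It remains to verify that any continuous martingale $Y$ with $Y(0)=0$ taking values, up to time $\tau_K$, in a compact convex set $Q\subset W$ with $0$ an extreme point of $Q$ must satisfy $Y(t)=0$ for all $t\le\tau_K$. I would prove this by induction on $\dim W$, with the base case trivial. For the inductive step, extremality of $0$ implies $0\in\partial Q$ (in $W$), so there exists $n\in W\setminus\{0\}$ with $n^\top q\ge 0$ for all $q\in Q$; then $n^\top Y(\fdot\wedge\tau_K)$ is a nonnegative continuous martingale starting at $0$, hence identically zero, and so $Y$ stays in $Q\cap n^\perp$ on $[0,\tau_K]$. Since $0$ is still extreme in $Q\cap n^\perp$, the induction hypothesis applies inside the $(\dim W-1)$-dimensional subspace $n^\perp$ and finishes the proof. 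The main conceptual point is the extremality of $0$ in the projected set; the subsequent induction is then routine.
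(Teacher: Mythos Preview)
Your proof is correct. Both your argument and the paper's rest on the same probabilistic kernel---a nonnegative continuous martingale starting at zero is identically zero---applied iteratively via supporting hyperplanes. The organization differs: the paper works directly in $\R^d$, repeatedly intersecting $K$ with supporting hyperplanes through $\bar x$ to produce a chain $K\supset K_1\supset\cdots\supset K_k=F_{\bar x}$ of strictly decreasing dimension, showing at each step that $X^{\tau_K}$ cannot leave $K_i$. You instead first establish the convex-geometric identity $F_{\bar x}=K\cap\aff(F_{\bar x})$, then project onto $\aff(F_{\bar x})^\perp$ and reduce to the clean abstract statement that a continuous martingale starting at an extreme point of a compact convex set stays there. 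Your route buys a reusable lemma and makes the role of extremality explicit; the paper's route is shorter and avoids the projection and the auxiliary extremality argument. Substantively the two are close cousins: your induction on $\dim W$ via a supporting functional of $Q$ at $0$ is exactly the paper's iteration via a supporting halfspace of $K$ at $\bar x$, viewed after projection.
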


\begin{proof}
If ${\bar x} \in \Fcal_d \setminus  \Fcal_{d-1}$, then $F_{\bar x} = K$, and the statement is obvious.  Otherwise, there exists a supporting halfspace $H_1 =\{x\in\R^d\colon a_1^\top x\ge b_1\}$ with  $(a_1,b_1)\in\R^d\times\R$ such that $K \subset H_1$ and  $\bar x \in \partial H_1$. Set $K_1 = K \cap \partial  H_1$ and note that $\dim(K_1) < \dim(K) = d$ and $F_{\bar x} \subset K_1$. The scalar process $a_1^\top X^{\tau_K}-b_1$ is a nonnegative $\P$-martingale starting at zero, hence is identically zero. Therefore $\tau_{K_1} = \tau_K$. If $K_1 = F_{\bar x}$, we are done.  If not, we iterate the procedure and fix another halfspace
$H_2 =\{x\in\R^d\colon a_2^\top x\ge b_2\} \neq H_1$ with  $(a_2,b_2)\in\R^d\times\R$  such that  $K_1 \subset H_2$ and  $\bar x \in \partial H_2$. Setting $K_2 = K_1 \cap \partial  H_2$ yields  $\dim(K_2) < \dim(K_1)$ and $F_{\bar x} \subset K_2$. As above, we again obtain $\tau_{K_2} = \tau_{K_1} = \tau_K$.  We proceed in the same way, but thanks to the reduction in dimension at most $d$ times, until $K_k = F_{\bar x}$ for some $k \in \{1, \ldots, d\}$. We then have $\tau_{F_{\bar x}} = \tau_{K_k} = \ldots =  \tau_{K_1} = \tau_K$, which proves the statement.
\end{proof}

\begin{lemma}\label{L_Kvzero}
Let $K$ be a convex body and consider a point $x\in K$. Then $v(x)=0$ if and only if $\dim(F_x)\le1$.
\end{lemma}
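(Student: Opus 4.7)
The plan is to prove the two implications separately. For $\dim(F_x)\le 1$ the argument rests on Lemma~\ref{L_FtauK} together with the trace constraint $\tr\langle X\rangle(t)=t$, which force the martingale to degenerate into an (at most) one-dimensional Brownian motion confined to a bounded segment. For $\dim(F_x)\ge 2$ I would build a candidate law by transplanting the construction from Example~\ref{Ex:190511} into a two-dimensional subspace of $\aff(F_x)$.

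First suppose $\dim(F_x)\le 1$, and fix $\P\in\Pcal_x$. Let $V$ be the linear subspace parallel to $\aff(F_x)$. By Lemma~\ref{L_FtauK}, $\tau_K=\tau_{F_x}$ $\P$-a.s., so by continuity $X(t)-x\in V$ on $[0,\tau_K]$. Consequently every component of $X-x$ orthogonal to $V$ is identically zero on $[0,\tau_K]$, its quadratic variation vanishes there, and the trace identity forces all of $\langle X\rangle$ on $[0,\tau_K]$ to concentrate in $V$. In the case $\dim V=0$ this gives $t=\tr\langle X\rangle(t)=0$ for every $t\le\tau_K$, hence $\tau_K=0$ $\P$-a.s. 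In the case $\dim V=1$, picking a unit vector $u$ spanning $V$, the scalar martingale $Y=u^\top(X-x)$ satisfies $\langle Y\rangle(t\wedge\tau_K)=t\wedge\tau_K$. After extending $Y^{\tau_K}$ to a global continuous martingale with quadratic variation $t$ (by adjoining an independent Brownian motion on an enlarged probability space), L\'evy's characterization identifies $\tau_K$ with the first exit time of a standard Brownian motion from a bounded interval $[a,b]$ with $a<0<b$. Since $\P(\tau_K<\varepsilon)>0$ for every $\varepsilon>0$ for such an exit time, $\essinf_\P\tau_K=0$; taking the supremum over $\P$ yields $v(x)=0$.

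For the converse, suppose $\dim(F_x)\ge 2$. I would pick any two-dimensional linear subspace $V_2$ of the direction of $\aff(F_x)$ with orthonormal basis $\{e_1,e_2\}$. Since $x\in\ri(F_x)$, there exists $r>0$ with $\{x+y_1e_1+y_2e_2\colon y_1^2+y_2^2\le r^2\}\subset F_x$. Applying Lemma~\ref{L_weaksolution} inside $V_2\cong\R^2$ with the skew-symmetric operator $S\colon(y_1,y_2)\mapsto(y_2,-y_1)$, one obtains a weak solution $(Y_1,Y_2)$ starting at $(0,0)$ satisfying $Y_1(t)^2+Y_2(t)^2=t$ for all $t\ge 0$. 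Setting $X(t)=x+Y_1(t)e_1+Y_2(t)e_2$ and letting $\P$ be its law gives $\P\in\Pcal_x$ by a direct computation (the components of $X-x$ orthogonal to $V_2$ are constant, and the SDE contributes trace $1$ to $d\langle X\rangle$). Moreover $|X(t)-x|=\sqrt{t}$, so $X(t)\in F_x\subset K$ for every $t\le r^2$, whence $\tau_K\ge r^2$ $\P$-a.s.\ and $v(x)\ge r^2>0$.

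The main obstacle is the L\'evy-characterization step in the $\dim(F_x)=1$ case: the quadratic variation of $Y$ is prescribed only up to the stopping time $\tau_K$, so one must first extend the stopped martingale to an honest Brownian motion on an enlarged probability space before identifying $\tau_K$ with a standard one-dimensional exit time. Everything else should reduce to routine applications of Lemma~\ref{L_FtauK}, Example~\ref{Ex:190511}, and Lemma~\ref{L_weaksolution}.
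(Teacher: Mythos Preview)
Your proof is correct and follows essentially the same approach as the paper: Lemma~\ref{L_FtauK} reduces to $F_x$, then the one-dimensional Brownian argument handles $\dim F_x\le1$, and for $\dim F_x\ge2$ you reconstruct by hand the circular martingale that the paper simply invokes via Example~\ref{Ex:190511}. One small imprecision: after extending $Y^{\tau_K}$ to a Brownian motion $W$, you only obtain $\tau_K\le T$ where $T$ is the exit time of $W$ from $[a,b]$ (not equality, since $X$ may exit $K$ from a point of $\ri(F_x)$, i.e.\ with $Y(\tau_K)\in(a,b)$), but this inequality is exactly the direction you need, as $\{T<\varepsilon\}\subset\{\tau_K<\varepsilon\}$ gives $\P(\tau_K<\varepsilon)\ge\P(T<\varepsilon)>0$.
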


\begin{proof}
Suppose $\dim(F_x)=0$, so that $F_x=\{x\}$ is a singleton. Then $X$ leaves $F_x$ immediately under any $\P\in\Pcal_x$, that is,   $\tau_{F_x}= 0$. Suppose instead that $\dim(F_x)=1$, so that $F_x$ is a line segment. Then under any $\P\in\Pcal_x$, $X$ evolves like a one-dimensional Brownian motion along the line segment $F_x$, at least until $\tau_{F_x}$.  Thus $\essi{\P} \tau_{F_x}=0$, since $X$ reaches the endpoints of $F_x$ arbitrarily quickly with positive probability. By Lemma~\ref{L_FtauK}, we have $\essi{\P} \tau_{K}=0$. Therefore, in either case, we deduce that $v(x)=0$. For the converse direction, assume that $\dim(F_x)>1$. Then there exists a $\dim(F_x)$-dimensional closed ball $B \subset F_x$ with radius $r>0$. Since $\tau_{B} \leq \tau_{K}$, Example~\ref{Ex:190511} yields  $v(x) \geq r^2 > 0$.
\end{proof}

We now discuss continuity of the value function $v$. It was shown in Proposition~\ref{P_v_proporties}\ref{P_v_proporties_2} that the value function $v$ is upper semicontinuous. Therefore, if $v(x)=0$ at a point $x\in K$, then $v$ must be continuous at $x$. Of course, many convex bodies $K$ have boundary faces of dimension two or higher, in which case Lemma~\ref{L_Kvzero} shows that $v$ will not be zero everywhere on the boundary. Still, even in such cases, one might hope that $v$ remains continuous. Unfortunately, this is not true in general, as the following example shows.

\begin{example}\label{E_v_dicontinuous}
Let $C=\{(x,x(1-x),0)\colon x\in(0,1]\}$ denote a half-open arc in the $xy$-plane. Next let $K_0\subset\R^3$ be the closed convex hull of $\{(0,0,1), (0,0,-1)\} \cup C$. Then every point $\bar x_0\in C$ is an extreme point of $K_0$, but the origin $(0,0,0)\in\{(0,0)\}\times[-1,1]\subset K_0$ is not, despite being a limit point of $C$. Now, define $K=K_0\times[-1,1]\subset\R^4$, which is compact and convex. If $\bar x=(\bar x_0,0)\in C\times\{0\}$, then $\dim(F_{\bar x})=1$, so $v(\bar x)=0$ by Lemma~\ref{L_Kvzero}. On the other hand, the boundary face containing the origin is the square $F_0=\{(0,0)\}\times(-1,1)^2$ with $\dim(F_0)=2$, so that $v(0)>0$. Since the origin is a limit point of $C\times\{0\}$, we conclude that $v$ is not continuous on $K$, despite $K$ being convex.
\end{example}

In Example~\ref{E_v_dicontinuous}, continuity of $v$ fails because $\Fcal_1$ is not closed. One might therefore hope that continuity can be proved if $\Fcal_1,\ldots,\Fcal_d$ are closed. (Requiring $\Fcal_0$ closed should be, and is, unnecessary because $v$ is zero on all of $\Fcal_1$.) This condition indeed turns out to imply continuity. The proof iterates over the $k$-skeletons, in each step making use of the following refined version of the argument in Proposition~\ref{P:190523}. The argument is probabilistic and rests on the dynamic programming principle.

\begin{lemma}\label{L_modulus_of_v}
Let $K$ be a convex body, fix $k\in\{1,\ldots,d\}$, and assume $v|_{\cl(\Fcal_{k-1})}$ is continuous. Then there is a modulus $\omega$ such that the following holds. If $\bar x,\bar y\in\Fcal_k$, $\dim(F_{\bar x})\le\dim(F_{\bar y})$, $A$ is an affine subspace containing $F_{\bar x}$, and $Q$ is an orthogonal $d\times d$ matrix such that the map $x\mapsto Q(x-\bar x)+\bar y$ maps $A$ to $\aff(F_{\bar y})$, then
\[
v(\bar x) \le v(\bar y) + \omega(c|Q-I| + |\bar x-\bar y|),
\]
where $c=\diam(K)$ is the diameter of $K$.
\end{lemma}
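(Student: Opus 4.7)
My plan is to combine the uniform modulus from Lemma~\ref{L_unif_usc} with two parallel applications of the dynamic programming principle: one for an optimal law $\P \in \Pcal_{\bar x}$ and one for its pushforward $\Q = T_*\P \in \Pcal_{\bar y}$ under the affine isometry $T(x) = Q(x - \bar x) + \bar y$.

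First, applying Lemma~\ref{L_unif_usc} to the upper semicontinuous function $v$ and the compact continuity set $C = \cl(\Fcal_{k-1})$ yields a modulus $\omega$ such that $v(x) \le v(y) + \omega(|x - y|)$ for all $x \in \R^d$ and $y \in C$. Two degenerate subcases dispatch immediately: if $\dim F_{\bar x} \le 1$, Lemma~\ref{L_Kvzero} gives $v(\bar x) = 0$; if $\bar y \in C$, the bound follows at once from the modulus with $y = \bar y$. Henceforth assume $\dim F_{\bar x} \ge 2$ and $\bar y \notin C$, so in particular $\dim F_{\bar y} = k$ and $\bar y \in \ri(F_{\bar y})$.

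Next, fix an optimal $\P \in \Pcal_{\bar x}$ and set $Y(t) = T(X(t))$. Orthogonality of $Q$ and affinity of $T$ make $Y$ a $\P$-martingale starting at $\bar y$ with $\tr\langle Y\rangle(t) = \tr(Q^\top Q\,\langle X\rangle(t)) = t$, so its law $\Q$ lies in $\Pcal_{\bar y}$. By Lemma~\ref{L_FtauK} applied to $\P$, $X(t) \in F_{\bar x} \subset A$ on $[0, \tau_K]$, whence $Y(t) \in T(A) = \aff(F_{\bar y})$ there, and using $|X(t) - \bar x| \le c$,
\[
|Y(t) - X(t)| = |(Q - I)(X(t) - \bar x) + (\bar y - \bar x)| \le c|Q - I| + |\bar x - \bar y|.
\]
The same lemma applied to $\Q$ identifies $\sigma$, the exit time of the $\Q$-coordinate process from $K$, with its exit time from $F_{\bar y}$; hence the coordinate process lands in $\rbd(F_{\bar y}) \subset \Fcal_{k-1} \subset C$ at time $\sigma$.

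Finally, I pair two dynamic programming identities at $\sigma$. For the optimal $\P$,
\[
v(\bar x) = \essinf_\P\{\sigma \wedge \tau_K + v(X(\sigma))\,\bm 1_{\sigma \le \tau_K}\}.
\]
On $\{\sigma \le \tau_K\}$, the modulus with $y = Y(\sigma) \in C$ and $x = X(\sigma)$ yields $v(X(\sigma)) \le v(Y(\sigma)) + \omega(c|Q - I| + |\bar x - \bar y|)$; on $\{\sigma > \tau_K\}$, non-negativity of $v$ gives $\tau_K < \sigma \le \sigma + v(Y(\sigma))$. Hence
\[
v(\bar x) \le \essinf_\P\{\sigma + v(Y(\sigma))\} + \omega(c|Q - I| + |\bar x - \bar y|).
\]
Since $\sigma + v(Y(\sigma))$ is a pathwise functional of $Y$ and $\Q = T_*\P$, the first essinf equals its counterpart under $\Q$, which the dynamic programming principle for $\Q \in \Pcal_{\bar y}$ bounds above by $v(\bar y)$, completing the argument.

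The main technical obstacle is filtration bookkeeping: $\sigma$ is the exit time from the closed set $K$, hence only an $\F^X_+$-stopping time, whereas Proposition~\ref{P_v_proporties}\ref{P_v_proporties_3} is stated for $\F^X$-stopping times. I would either invoke the standard extension of the DPP to right-continuous stopping times, or approximate $\sigma$ from above by first hitting times of closed $\varepsilon$-thickenings of $K^c$ (which are $\F^X$-stopping times) and pass to the limit, using $v \ge 0$ together with the upper semicontinuity of $v$ and Fatou's lemma.
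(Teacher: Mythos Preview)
Your approach is essentially the paper's: obtain the modulus from Lemma~\ref{L_unif_usc}, transport an optimal $\P\in\Pcal_{\bar x}$ via the affine isometry to a law in $\Pcal_{\bar y}$, and sandwich two applications of the dynamic programming principle. However, there is a genuine gap in the step where you assert that the transported process lands in $\rbd(F_{\bar y})$ at time $\sigma$.

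Lemma~\ref{L_FtauK} applied to $\Q$ only tells you that $\sigma=\tau_K(Y)=\tau_{F_{\bar y}}(Y)$, i.e.\ that $Y$ remains in $F_{\bar y}$ up to and including time $\sigma$. It does \emph{not} force $Y(\sigma)\in\rbd(F_{\bar y})$: nothing prevents $Y$ from sitting at a point of $\ri(F_{\bar y})$ at time $\sigma$ and exiting $K$ transversally to $\aff(F_{\bar y})$ immediately afterwards. Concretely, on the event $\{\sigma=\tau_K\}$ the process $X$ is no longer constrained to $A$ for $t>\tau_K$, so $Y=T(X)$ need not remain in $\aff(F_{\bar y})$, and the argument that $Y(\sigma)\in\rbd(F_{\bar y})$ breaks down. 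On that event you therefore cannot apply the modulus with $y=Y(\sigma)$, since $Y(\sigma)$ may lie in a $k$-dimensional face and hence outside $\cl(\Fcal_{k-1})$. Your case split $\{\sigma\le\tau_K\}$ versus $\{\sigma>\tau_K\}$ leaves exactly this event uncovered.

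The paper repairs both this issue and the filtration issue you flag by a single device: it first modifies the optimal $\P$ so that $X(t)\in A$ for \emph{all} $t\ge0$ (harmless for optimality, since only the behavior after $\tau_K$ changes), and then uses $\theta=\inf\{t\ge0:Y(t)\notin\ri(F_{\bar y})\}$ rather than the exit time from $K$. With $X$ confined to $A$, the process $Y$ stays in $\aff(F_{\bar y})$ forever, so $Y(\theta)\in\rbd(F_{\bar y})\subset\Fcal_{k-1}$ by construction; moreover $\theta$ is then the first hitting time of the closed set $\aff(F_{\bar y})\setminus\ri(F_{\bar y})$ and hence $\P$-a.s.\ equal to an $\F^X$-stopping time, which sidesteps the $\F^X_+$-issue without any approximation.
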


\begin{proof}
Since $v$ is upper semicontinuous by Proposition~\ref{P_v_proporties}\ref{P_v_proporties_2}, since $\cl(\Fcal_{k-1})$ is compact, and since $v|_{\cl(\Fcal_{k-1})}$ is continuous by assumption, Lemma~\ref{L_unif_usc} gives a modulus $\omega$ such that
\begin{equation}\label{eq_26012019_2}
\text{$v(x) \le v(y) + \omega(|x-y|)$ for all $x\in\R^d$ and $y\in\cl(\Fcal_{k-1})$.}
\end{equation}
We now show that $\omega$ satisfies the claimed property. To this end, let $\bar x$, $\bar y$, $A$, and $Q$ be as in the statement of the lemma, and select an optimal law $\P\in\Pcal_{\bar x}$. Lemma~\ref{L_FtauK} asserts that $X(t)\in F_{\bar x}$ for all $t\le\tau_K$, $\P$-a.s. By modifying the behavior after $\tau_K$, which does not affect the optimality of $\P$, we may therefore assume that
\begin{equation}\label{eq_26012019_1}
\text{$X(t)\in A$ for all $t\ge0$, $\P$-a.s.}
\end{equation}
Consider the affine isometry $\Phi\colon A\to\aff(F_{\bar y})$ given by $\Phi(x)=Q(x-\bar x)+\bar y$. Using this isometry, define
\[
\text{$Y=\Phi(X)$ and $\theta=\inf\{t\ge0\colon Y(t)\notin \ri(F_{\bar y})\}$.}
\]
Note that $\P(\theta<\infty)=1$ by Example~\ref{Ex:190511}. Due to \eqref{eq_26012019_1}, $Y$ takes values in $\aff(F_{\bar y})$, and hence $Y(\theta)\in\rbd(F_{\bar y})\subset\Fcal_{k-1}$, $\P$-a.s. Thus by \eqref{eq_26012019_2} and monotonicity of $\omega$ we have, $\P$-a.s.,
\begin{align*}
v(X(\theta)) &\le v(Y(\theta)) + \omega(|X(\theta)-Y(\theta)|) \\
&= v(Y(\theta)) + \omega(|(I-Q)(X(\theta)-\bar x)+\bar x-\bar y|) \\
&\le v(Y(\theta)) + \omega(c|I-Q| + |\bar x-\bar y|),
\end{align*}
where $c=\diam(K)$. We now combine this with two applications of the dynamic programming principle of Proposition~\ref{P_v_proporties}\ref{P_v_proporties_3}. This is permissible because $\theta$ is $\P$-a.s.\ equal to an $\F^X$-stopping time, despite not being an $\F^X$-stopping time itself in general. We get
\begin{align*}
v(\bar x) &= \essi{\P}  \{\theta\wedge\tau_K + v(X(\theta))\bm1_{\theta\le\tau_K}\} \\
&\le \essi{\P}  \{\theta + v(Y(\theta))\} + \omega(c|I-Q| + |\bar x-\bar y|) \\
&\le v(\bar y) + \omega(c|I-Q| + |\bar x-\bar y|).
\end{align*}
In the last inequality, the application of the dynamic programming principle uses that the law of $Y$ lies in $\Pcal_{\bar y}$ due to the isometry property of $\Phi$, that $\F^Y=\F^X$, and that $\theta\le\inf\{t\ge0\colon Y(t)\notin K\}$, $\P$-a.s. This completes the proof.
\end{proof}

We now state the key propagation of continuity result, analogous to Proposition~\ref{P:190523}. Part of the proof is convenient to phrase in terms of convergence of affine subspaces. For affine subspaces $A_n$ and $A$ of $\R^d$, we say that $A_n\to A$ if $\dim(A_n)=\dim(A)$ for all large $n$, there are points $x_n\in A_n$ and $x\in A$ such that $x_n\to x$, and $A_n-x_n$ converges to $A-x$ as elements of the Grassmannian ${\rm Gr}(\dim(A),\R^d)$ of $\dim(A)$-dimensional linear subspaces of $\R^d$. In this case, there exist orthogonal $d\times d$ matrices $Q_n$ such that $Q_n\to I$ and the map $y\mapsto Q_n(y-x)+x_n$ maps $A$ to $A_n$  for $n$ sufficiently large. 
The Grassmannian is known to be compact. Therefore, whenever the affine subspaces $A_n$ contain points $x_n$ that converge to some limit, it is possible to select a convergent subsequence of the $A_n$.

\begin{lemma}\label{L_cont_induction}
Let $K$ be a convex body, fix $k\in\{1,\ldots,d\}$, and assume $v|_{\cl(\Fcal_{k-1})}$ is continuous. Then $v|_{\Fcal_k}$ is also continuous.
\end{lemma}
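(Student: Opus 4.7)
The plan is to combine upper semicontinuity of $v$ (from Proposition~\ref{P_v_proporties}\ref{P_v_proporties_2}) with a matching lower bound obtained via Lemma~\ref{L_modulus_of_v} and a Grassmannian compactness argument. Starting from a sequence $\bar x_n \to \bar x$ in $\Fcal_k$, upper semicontinuity already gives $\limsup_n v(\bar x_n) \le v(\bar x)$, so the task is to establish the reverse inequality $\liminf_n v(\bar x_n) \ge v(\bar x)$. After extracting a subsequence realizing the $\liminf$, I would pass to a further subsequence on which $\dim(F_{\bar x_n}) = d_*$ is constant (using finiteness of possible dimensions in $\{0,\ldots,k\}$) and $\aff(F_{\bar x_n})$ converges, in the sense introduced just before this lemma, to some $d_*$-dimensional affine subspace $A_*\ni \bar x$ (using compactness of the Grassmannian).

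I would then split on the comparison between $d_*$ and $\dim(F_{\bar x})$. In the easy case $d_* < \dim(F_{\bar x})$, $d_* \le k-1$, so $\bar x_n \in \Fcal_{k-1}$ and $\bar x \in \cl(\Fcal_{k-1})$; the standing hypothesis that $v|_{\cl(\Fcal_{k-1})}$ is continuous yields $v(\bar x_n) \to v(\bar x)$ immediately. In the main case $d_* \ge \dim(F_{\bar x})$, I apply Lemma~\ref{L_modulus_of_v} with first argument $\bar x$ and second argument $\bar x_n$, so that the dimension constraint $\dim(F_{\bar x}) \le \dim(F_{\bar x_n})$ holds. With $A = A_*$ playing the role of the ambient affine subspace containing $F_{\bar x}$, Grassmannian convergence provides orthogonal matrices $Q_n \to I$ with $Q_n(A_* - \bar x) = \aff(F_{\bar x_n}) - \bar x_n$; the map $\Phi_n(z) = Q_n(z - \bar x) + \bar x_n$ then sends $A_*$ onto $\aff(F_{\bar x_n})$. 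The lemma gives
\[
v(\bar x) \le v(\bar x_n) + \omega\bigl(c|Q_n - I| + |\bar x - \bar x_n|\bigr),
\]
and letting $n \to \infty$ yields $\liminf_n v(\bar x_n) \ge v(\bar x)$.

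The step I expect to be the main obstacle is verifying the geometric inclusion $\aff(F_{\bar x}) \subseteq A_*$ (equivalently $\aff(F_{\bar x}) - \bar x \subseteq A_* - \bar x$) that legitimizes taking $A = A_*$. My plan here is to fix any unit vector $w \in \aff(F_{\bar x}) - \bar x$, use $\bar x \in \ri(F_{\bar x})$ to pick $t_0 > 0$ with $\bar x \pm t_0 w \in \ri(F_{\bar x}) \subset K$, and argue by a local convexity analysis of $K$ near $\bar x$---for instance, via the tangent-cone decomposition at $\bar x$ into its lineality space $\aff(F_{\bar x}) - \bar x$ and a pointed outward cone, together with continuity of the support function of $K$---that $\bar x_n \pm tw \in K$ for all sufficiently small $t > 0$ and all sufficiently large $n$. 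The segment $[\bar x_n - tw,\, \bar x_n + tw]$ then lies in $K$ and contains $\bar x_n \in \ri(F_{\bar x_n})$ in its relative interior, so the defining property of a face forces this segment to lie in $F_{\bar x_n}$ and thus $w \in \aff(F_{\bar x_n}) - \bar x_n$ for all large $n$. Passing to the Grassmannian limit gives $w \in A_* - \bar x$, completing the plan.
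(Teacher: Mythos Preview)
Your argument has a genuine gap at the ``main obstacle'' step. The claim that $\bar x_n \pm tw \in K$ for all sufficiently small $t>0$ and large $n$ is false, and so is the weaker intermediate statement that $w \in \aff(F_{\bar x_n}) - \bar x_n$ for large $n$. Take $K\subset\R^3$ to be the frustum $\{(x,y,z): x^2+y^2 \le (1+z)^2,\ 0\le z\le 1\}$, whose one-dimensional faces are the lateral segments $E_\theta$ from $(\cos\theta,\sin\theta,0)$ to $(2\cos\theta,2\sin\theta,1)$. Set $\bar x=(\tfrac32,0,\tfrac12)\in\ri(E_0)$, so $w=(1,0,1)/\sqrt2$, and $\bar x_n=(\tfrac32\cos\tfrac1n,\tfrac32\sin\tfrac1n,\tfrac12)\in\ri(E_{1/n})$; then $d_*=1=\dim F_{\bar x}$, so you are in your main case. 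But $\bar x_n - tw \notin K$ for every $t>0$ and every finite $n$: at height $z=\tfrac12-\tfrac{t}{\sqrt2}$ the cross-sectional radius is $\tfrac32-\tfrac{t}{\sqrt2}$, while
\[
\Bigl(\tfrac32\cos\tfrac1n - \tfrac{t}{\sqrt2}\Bigr)^2 + \Bigl(\tfrac32\sin\tfrac1n\Bigr)^2 - \Bigl(\tfrac32 - \tfrac{t}{\sqrt2}\Bigr)^2 = \tfrac{3t}{\sqrt2}\bigl(1-\cos\tfrac1n\bigr)>0.
\]
In particular $w\notin\aff(E_{1/n})-\bar x_n$ for any finite $n$. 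The tangent-cone heuristic cannot rescue this: the lineality direction $w$ at $\bar x$ need not be a lineality direction at nearby boundary points, because the normal cone at $\bar x_n$ may contain unit vectors $\nu_n$ with $\nu_n\cdot w>0$ tending to zero.

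The paper proves the lemma with a different dichotomy. It first discards $x_n\in\cl(\Fcal_{k-1})$ (handled by the hypothesis), so that $\dim F_{x_n}=k$, and then splits on $r_n={\rm dist}(x_n,\rbd F_{x_n})$. If $r_n\ge r>0$, each $F_{x_n}$ contains a $k$-ball of radius $r$ about $x_n$; these balls converge to a $k$-ball $B\subset A_*$ centred at $\bar x$, and closedness of $K$ forces $B\subset K$, hence $B\subset F_{\bar x}$ and $A_*=\aff(F_{\bar x})$, after which Lemma~\ref{L_modulus_of_v} applies as you intended. If instead $\liminf r_n=0$, one does \emph{not} try to relate $A_*$ to $\aff(F_{\bar x})$ at all: one picks $y_n\in\rbd(F_{x_n})\subset\Fcal_{k-1}$ with $|x_n-y_n|\to0$, so $\bar x\in\cl(\Fcal_{k-1})$, and combines the assumed continuity of $v|_{\cl(\Fcal_{k-1})}$ with Lemma~\ref{L_modulus_of_v} applied with $Q=I$ to compare $v(x_n)$ with $v(y_n)$. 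Your $d_*$-based split does not isolate this second regime, which is exactly where your geometric claim breaks down.
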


\begin{proof}
Since $v$ is upper semicontinuous by Proposition~\ref{P_v_proporties}\ref{P_v_proporties_2}, it suffices to show that $v|_{\Fcal_k}$ is lower semicontinuous. Since $v|_{\cl(\Fcal_{k-1})}$ is continuous by assumption, this amounts to showing that
\begin{equation}\label{eq_lsc_proof}
\text{$\bar x\in\Fcal_k$, $x_n\in\Fcal_k\setminus\cl(\Fcal_{k-1})$, $x_n\to\bar x$, $v(x_n)\to\alpha\in\R$} \quad\Longrightarrow\quad v(\bar x)\le\alpha.
\end{equation}
Let therefore $\bar x,x_n,\alpha$ be as in \eqref{eq_lsc_proof}. Define $r_n = {\rm dist}(x_n, \rbd(F_{x_n}))$. This is the radius of the largest $k$-dimensional ball centered at $x_n$ and contained in $F_{x_n}$. We consider two separate cases.

{\it Case 1:} Suppose $\liminf_{n\to\infty}r_n=0$. After passing to a subsequence, we have $r_n\to0$. Then there exist points $y_n\in\rbd(F_{x_n})$ such that $|x_n-y_n|\to0$. Thus $y_n\in\Fcal_{k-1}$ and $y_n\to\bar x$, so that $\bar x\in\cl(\Fcal_{k-1})$ and $v(y_n)\to v(\bar x)$. Moreover, applying Lemma~\ref{L_modulus_of_v} with $\bar x=y_n$, $\bar y=x_n$, $A=\aff(F_{x_n})$, and $Q=I$ then gives
\[
v(x_n) = v(y_n) + v(x_n) - v(y_n) \ge v(y_n) - \omega(|x_n-y_n|) \to v(\bar x).
\]
Thus $v(\bar x)\le\alpha$, proving \eqref{eq_lsc_proof} in this case.

{\it Case 2:} Suppose instead there exists $r>0$ such that $r_n\ge r$ for all $n$. Then each $F_{x_n}$ contains a $k$-dimensional ball $B_n$ of radius $r$ centered at $x_n$. After passing to a subsequence, we have $\aff(F_{x_n})\to A$ for some $k$-dimensional affine subspace $A$. Thus there exist orthogonal $d\times d$ matrices $Q_n$ such that $Q_n\to I$ and the affine isometry $\Phi_n\colon x\mapsto Q_n(x-\bar x)+x_n$ maps $A$ to $\aff(F_{x_n})$ for each $n$. Now, let $B\subset A$ be the $k$-dimensional ball of radius $r$ centered at $\bar x$. There is only one such ball, and we have $B_n=\Phi_n(B)$ for all $n$. For any $x\in B$ we thus have $\Phi_n(x)\in F_{x_n}\subset K$ and $\Phi_n(x)\to x$. Since $K$ is closed, it follows that $B\subset K$. Hence $B\subset F_{\bar x}$, so that $A=\aff(B)\subset\aff(F_{\bar x})$. On the other hand, $\dim(A)=k\ge\dim(F_{\bar x})$, so in fact $A=\aff(F_{\bar x})$. We now apply Lemma~\ref{L_modulus_of_v} with $\bar x$, $\bar y=x_n$, $A=\aff(F_{\bar x})$, and $Q=Q_n$ to get
\[
v(\bar x) \le v(x_n) + \omega(c|Q_n-I| + |\bar x-x_n|)
\]
with $c=\diam(K)$. Sending $n\to\infty$ yields $v(\bar x)\le\alpha$ and proves \eqref{eq_lsc_proof}.
\end{proof}

In view of Lemma~\ref{L_cont_induction}, it is of interest to know whether the $k$-skeletons of a given convex body are closed. For some values of $k$, closedness is automatic.

\begin{lemma}\label{L_skeletons_closed}
Let $K$ be a convex body. Then $\Fcal_d$, $\Fcal_{d-1}$, and $\Fcal_{d-2}$ are closed.
\end{lemma}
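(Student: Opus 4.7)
The plan is to treat the cases $k=d$, $k=d-1$, $k=d-2$ in turn. Without loss of generality I would assume $\dim K=d$, so $\interior(K)\ne\emptyset$; the general case reduces to this by working inside $\aff(K)$. First, $\Fcal_d=K$ is closed by compactness of $K$. Second, every face other than $K$ itself is a boundary face, and conversely every boundary point of $K$ lies in the relative interior of a unique boundary face, so $\Fcal_{d-1}=\partial K$, which is closed as the complement of the open set $\interior(K)$.

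The substance of the lemma is the claim for $k=d-2$, which I would prove by showing that its complement in $K$ is open. A point $x\in K\setminus\Fcal_{d-2}$ has $\dim F_x\ge d-1$, so either $x\in\interior(K)$, which trivially has a $K$-neighborhood lying in $\interior(K)\subset K\setminus\Fcal_{d-2}$, or $x\in\ri(F)$ for some facet $F$. In the latter case let $H=\{y\colon\langle y,n\rangle=c\}$ be the supporting hyperplane with $F=K\cap H$ and $K\subset\{y\colon\langle y,n\rangle\le c\}$, pick a relatively open ball $B\subset\ri(F)$ in $H$ centered at $x$, and fix any $p\in\interior(K)$, noting that $\langle p,n\rangle<c$. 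For $y\in K$ sufficiently close to $x$, one has $\langle y,n\rangle\le c$ by the supporting-hyperplane condition, and I would split on the sign of $c-\langle y,n\rangle$. If $\langle y,n\rangle=c$, then $y\in H\cap K=F$, and by relative openness of $\ri(F)$ we get $y\in\ri(F)$ for $y$ close enough. If $\langle y,n\rangle<c$, I would solve $y=\lambda a+(1-\lambda)p$ for $a\in H$ uniquely, obtaining $\lambda=(\langle y,n\rangle-\langle p,n\rangle)/(c-\langle p,n\rangle)$, which lies in $(0,1)$ for $y$ near $x$ (using $\langle x,n\rangle=c>\langle p,n\rangle$ and continuity), and $a\to x$ as $y\to x$, so $a\in B\subset K$ for $y$ close enough. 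Then $y$ lies on the open segment from $p\in\interior(K)$ to $a\in K$, and by a standard fact from convex analysis (see e.g.\ \citealp[Theorem~6.1]{roc_70}) such segments lie in $\interior(K)$; hence $y\in\interior(K)$.

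In both sub-cases the nearby points of $K$ fall into $\interior(K)\cup\ri(F)$, which is disjoint from $\Fcal_{d-2}$, so $K\setminus\Fcal_{d-2}$ is open and $\Fcal_{d-2}$ is closed. The only real obstacle is the geometric step for $k=d-2$, namely certifying that a point just "below" the supporting hyperplane and close to a point of $\ri(F)$ is in fact in $\interior(K)$ rather than in some lower-dimensional face; the representation $y=\lambda a+(1-\lambda)p$ through an arbitrary interior point $p$ and the standard open-segment property dispose of this cleanly.
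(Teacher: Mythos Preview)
Your proof is correct and follows essentially the same approach as the paper: both argue that a point $\bar x$ in the relative interior of a facet $F$ admits a neighborhood contained in $\ri(F)\cup\interior(K)\cup K^c$, hence disjoint from $\Fcal_{d-2}$. The paper asserts this geometric fact without justification, whereas you supply an explicit argument via the supporting hyperplane and the open-segment property from convex analysis; otherwise the reasoning is identical.
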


\begin{proof}
Both $\Fcal_d=K$ and $\Fcal_{d-1}=\partial K$ are closed. To see that $\Fcal_{d-2}$ is closed, assume for contradiction that there is a point $\bar x\in\cl(\Fcal_{d-2})\setminus\Fcal_{d-2}$. Then $\bar x$ lies in $\partial K$ but not in $\Fcal_{d-2}$, so must lie in the relative interior of a $(d-1)$-dimensional boundary face $F$. But then $\bar x$ admits an open neighborhood contained in $\ri(F)\cup\interior(K)\cup K^c$, and therefore cannot lie in the closure of $\Fcal_{d-2}$. This contradiction finishes the proof.
\end{proof}

Here is the main result of this section.

\begin{theorem}\label{T_190603}
Let $K$ be a convex body with $\Fcal_k$ closed for $1\le k\le d-3$. Then $v|_K$ is continuous.
\end{theorem}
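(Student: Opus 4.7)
The plan is to iterate Lemma~\ref{L_cont_induction} up the chain of skeletons, using the hypothesis together with Lemma~\ref{L_skeletons_closed} to guarantee that every $\Fcal_k$ with $k\ge 1$ is closed, and using Lemma~\ref{L_Kvzero} to seed the induction.

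First, I would observe that all of $\Fcal_1,\ldots,\Fcal_d$ are closed under the hypothesis. By Lemma~\ref{L_skeletons_closed}, $\Fcal_{d-2}$, $\Fcal_{d-1}$, and $\Fcal_d$ are closed for any convex body. The remaining skeletons $\Fcal_1,\ldots,\Fcal_{d-3}$ are closed by assumption when $d\ge 4$; when $d\in\{2,3\}$ there is nothing more to check, since $\{1,\ldots,d-3\}$ is empty and $\Fcal_1$ is already covered by Lemma~\ref{L_skeletons_closed} (for $d=3$ it equals $\Fcal_{d-2}$, while for $d=2$ it equals $\Fcal_{d-1}$).

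Next, I would set up the base case. For every $x\in\Fcal_1$, the face $F_x$ has dimension at most one, so Lemma~\ref{L_Kvzero} gives $v(x)=0$. Hence $v|_{\Fcal_1}\equiv 0$, which is in particular continuous on $\Fcal_1$; and since $\Fcal_1$ is closed, $\cl(\Fcal_1)=\Fcal_1$, so $v|_{\cl(\Fcal_1)}$ is continuous.

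Then I would induct on $k$. Suppose for some $k\in\{2,\ldots,d\}$ that $v|_{\Fcal_{k-1}}$ is continuous. Since $\Fcal_{k-1}$ is closed, $\cl(\Fcal_{k-1})=\Fcal_{k-1}$, so the hypothesis of Lemma~\ref{L_cont_induction} is satisfied, and that lemma yields that $v|_{\Fcal_k}$ is continuous as well. Iterating from $k=2$ up to $k=d$ we obtain continuity of $v|_{\Fcal_d}=v|_K$, which is the conclusion.

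There is really no serious obstacle here: the theorem is essentially a bookkeeping step that combines the geometric fact that the skeletons are closed (Lemma~\ref{L_skeletons_closed} and the hypothesis), the probabilistic vanishing statement (Lemma~\ref{L_Kvzero}), and the propagation-of-continuity lemma (Lemma~\ref{L_cont_induction}). The only point worth double-checking is that the induction is not derailed by $\Fcal_0$ failing to be closed; this is handled by starting the induction at $k=1$ via the direct observation that $v\equiv 0$ on $\Fcal_1$, bypassing $\Fcal_0$ entirely.
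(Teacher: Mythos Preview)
Your proposal is correct and follows essentially the same approach as the paper: start from $v\equiv 0$ on $\Fcal_1$ via Lemma~\ref{L_Kvzero}, then repeatedly apply Lemma~\ref{L_cont_induction}, using the closedness hypothesis for $\Fcal_1,\ldots,\Fcal_{d-3}$ and Lemma~\ref{L_skeletons_closed} for $\Fcal_{d-2},\Fcal_{d-1},\Fcal_d$. Your version is just a more explicit unpacking of the same induction.
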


\begin{proof}
By Lemma~\ref{L_Kvzero}, $v$ vanishes on $\Fcal_1$ and is therefore continuous there. Continuity on $K$ now follows by repeated application of Lemma~\ref{L_cont_induction}, making use of the closedness hypothesis on the $(k-1)$-skeletons for $1 \leq k-1\le d-3$, and Lemma~\ref{L_skeletons_closed} for $k-1\ge d-2$.
\end{proof}

As an immediate corollary, several interesting cases are covered.

\begin{corollary}\label{C_examplesK}
Each of the following conditions implies that $v|_K$ is continuous.
\begin{enumerate}
\item\label{C_examplesK_1} All boundary faces of $K$ have dimension zero or one.
\item\label{C_examplesK_2} $\dim(K)\in\{0,1,2,3\}$.
\item\label{C_examplesK_3} $K$ is a (convex) polytope.
\end{enumerate}
\end{corollary}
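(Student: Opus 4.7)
The plan is to verify, in each of the three cases, the hypothesis of Theorem~\ref{T_190603} that $\Fcal_k$ is closed for every $1\le k\le d-3$, possibly after first passing to the affine span of $K$. Throughout I would rely on Lemma~\ref{L_skeletons_closed} (automatic closedness of $\Fcal_d$, $\Fcal_{d-1}$, $\Fcal_{d-2}$) and Lemma~\ref{L_Kvzero} (vanishing of $v$ on $\Fcal_1$), both already established.

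For condition~\ref{C_examplesK_1}, I would argue that if every boundary face of $K$ has dimension at most one, then for any $k$ with $1\le k\le d-1$ the skeleton $\Fcal_k$ equals the union of all boundary faces, namely $\partial K$, and is therefore closed. Theorem~\ref{T_190603} then applies directly.

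For condition~\ref{C_examplesK_3}, I would invoke the standard fact that a convex polytope has only finitely many faces, each of which is itself a polytope and hence closed. Consequently $\Fcal_k$ is a finite union of closed sets for every $k$, so the hypothesis holds in the strongest possible sense and Theorem~\ref{T_190603} applies.

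Condition~\ref{C_examplesK_2} is the only one requiring mild care. When $\dim(K)=d_K\in\{0,1\}$, Lemma~\ref{L_Kvzero} already yields $v\equiv 0$, hence continuity. When $d_K\in\{2,3\}$, I would apply Theorem~\ref{T_190603} inside $\aff(K)$ in place of $\R^d$; there the index range $1\le k\le d_K-3$ is empty, so the hypothesis is vacuously satisfied. The one point worth verifying is that the value function computed via martingales in $\R^d$ coincides with the one obtained by restricting attention to $\aff(K)$, which follows because Lemma~\ref{L_FtauK} forces any $\P\in\Pcal_x$ to keep $X$ inside $F_x\subset\aff(K)$ until $\tau_K$. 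I expect this dimension-reduction bookkeeping in case~\ref{C_examplesK_2} to be the only mildly delicate step; everything else is immediate convex-geometric observation.
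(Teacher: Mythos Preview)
Your proposal is correct and follows the same route as the paper: apply Theorem~\ref{T_190603} after checking its hypothesis, using Lemma~\ref{L_skeletons_closed} and the finiteness of faces for polytopes. The paper's own proof is terser---it declares \ref{C_examplesK_1} and \ref{C_examplesK_2} ``immediate from Theorem~\ref{T_190603}'' and handles \ref{C_examplesK_3} exactly as you do---so your explicit observation that $\Fcal_k=\partial K$ for $1\le k\le d-1$ under \ref{C_examplesK_1}, and your care about passing to $\aff(K)$ under \ref{C_examplesK_2}, simply spell out what the paper leaves implicit.
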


\begin{proof}
\ref{C_examplesK_1} and \ref{C_examplesK_2} are immediate from Theorem~\ref{T_190603}. As for \ref{C_examplesK_3}, if $K$ is a polytope, then $\Fcal_k$ is closed for all $k=0,\ldots,d$; see e.g.\ \citet[Theorem~2.3]{pap_77} and the subsequent discussion.
Now apply Theorem~\ref{T_190603}.
\end{proof}

Thanks to Lemma~\ref{L_Kvzero}, if $v|_K$ is continuous then $\Fcal_1$ is necessarily closed.  
Are  the other $k$-skeletons also closed in this case?    If $d=4$ the answer is yes thanks to Lemma~\ref{L_skeletons_closed}.
In general the answer is no, as shown in Example~\ref{Ex:190526} below. Continuity of the value function therefore cannot be used to characterize closedness of the $k$-skeletons.

\begin{example}  \label{Ex:190526}
Recall the set $K_0 \subset \R^3$ from Example~\ref{E_v_dicontinuous}.
Let $K' = K_0 \times \R^2 \subset \R^5$ and set
\[
	K = K' \cap \{(x,y,z,u,w) \in \R^5: z^2 +  u^2 + w^2 \leq 1\}.
\]
Then $K$ is a convex body, and one can check that
\begin{align*}
\Fcal_1 &=  \{(x,y,z,u,w) \in \R^5: (x,y,z) \in \partial K_0, z^2 +  u^2 + w^2 = 1\};\\
\Fcal_2 &=   \{(x,y,z,u,w) \in \R^5: (x,y,z) \in K_0, z^2 +  u^2 + w^2 = 1\}\\
&\quad \cup \{(x,y,0,u,w) \in \R^5: x \in (0,1], y = x(1-x),  u^2 + w^2 \leq 1\}; \\
\cl(\Fcal_1) &= \Fcal_1; \\
\cl(\Fcal_2) &= \Fcal_2 \cup \{(0,0,0,u,w): u^2 + w^2 \leq 1\}.
\end{align*}
Since $\Fcal_1$ is closed, Lemmas~\ref{L_Kvzero} and~\ref{L_cont_induction} imply that $v|_{\Fcal_2}$ is continuous. We claim that $v|_{\cl(\Fcal_2)}$ is also continuous, but since $\Fcal_2\ne\cl(\Fcal_2)$ we must argue this directly at the remaining points of $\cl(\Fcal_2)$. Consider therefore such a point $(0,0,0,u,w) \in \cl(\Fcal_2)$ with $u^2 + w^2 \leq 1$ and an approximating sequence of points $(x_n, y_n, z_n, u_n, w_n)\in \cl(\Fcal_2)$. Then we know that 
$v(x_n, y_n, z_n, u_n, w_n) \geq 1 - z_n^2-u_n^2 -w_n^2$ for all $n \in \N$ thanks to Example~\ref{Ex:190511}. Thus by upper semicontinuity, we have
\begin{align*}
1-u^2-w^2 = v(0,0,0,u,w) &\ge \limsup_n v(x_n, y_n, z_n, u_n, w_n) \\
&\ge \liminf_n v(x_n, y_n, z_n, u_n, w_n) \ge 1-u^2-w^2.
\end{align*}
This yields continuity of $v|_{\cl(\Fcal_2)}$. Hence by Lemma~\ref{L_cont_induction}, $v|_{\Fcal_3}$ is continuous. Lemma~\ref{L_skeletons_closed} yields that $\Fcal_3$, $\Fcal_4$, $\Fcal_5=K$ are closed. Repeating the previous argument shows that $v|_K$ is continuous, even though $\Fcal_2$ is not closed.
\end{example}

\section{Smooth value functions}\label{S_v_smooth}

The goal of this section is to prove Theorem~\ref{T_smooth_case}. Let us first introduce some terminology. Let $K$ be a convex body. We say that a function $f$ lies in $C^2(K)$ if $f|_K$ is continuous, and the restriction $f|_{\ri(F)}$ to the relative interior of any face $F$ of $K$ lies in $C^2(\ri(F))$, understood in the usual sense of twice continuous differentiability on the $\dim(F)$-dimensional open set $\ri(F)\subset\aff(F)$. The gradient and Hessian computed relative to this set are then denoted $\nabla_K f(x)=\nabla(f|_{\ri(F)})(x)\in\aff(F-x)$ and $\nabla_K^2f(x)=\nabla^2(f|_{\ri(F)})(x)$ for any $x\in\ri(F)$. Thus $\nabla_K f(x)$ and $\nabla_K^2f(x)$ are the projections of $\nabla f(x)$ and $\nabla^2f(x)$ onto $\aff(F-x)$, whenever the latter exist.  A critical point of $f$ in $F$ is a point $x\in\ri(F)$ where $\nabla_K f(x)=0$. 

To prove Theorem~\ref{T_smooth_case}, it is enough to prove the following.

\begin{theorem}\label{T_smooth_case_second}
Let $d\ge2$ and let $K$ be a convex body with at most countably many faces. Assume the value function $v$ lies in $C^2(K)$. Assume also that in each face $F$ of dimension at least two, either $v$ has no critical point, or $v$ has one single critical point which additionally is a maximum. Then for every $\bar x\in K$ there is an optimal solution $\P\in\Pcal_{\bar x}$ under which $v(X(t))=v(\bar x)-t$ for all $t<\tau_K$.
\end{theorem}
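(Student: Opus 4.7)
The plan is to build the optimal martingale $X$ face by face, driven by a rank-one diffusion tangent to the level sets of $v$, and then to glue the face-wise pieces together using the strong Markov property. The starting observation is that by Lemma~\ref{L_FtauK} any $\P\in\Pcal_{\bar x}$ keeps $X$ inside the minimal face $F_{\bar x}$, so the problem decomposes naturally across faces; and by Theorem~\ref{T_usc_vs} applied to $v|_F$ in $\aff(F)$, the $C^2(K)$ hypothesis implies that $v$ solves classically the $\dim(F)$-dimensional minimum curvature PDE on the non-critical part of $\ri(F)$ for every face $F$ with $\dim(F)\ge 2$.

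First I would carry out the face-wise construction. At a non-critical point $x\in\ri(F)$ the minimum in the PDE is attained by some unit vector $y^*(x)\in\aff(F-x)$ orthogonal to $\nabla_K v(x)$, giving a rank-one coefficient $a(x)=y^*(x)y^*(x)^\top$ with $\tr(a(x))=1$, $a(x)\nabla_K v(x)=0$, and $\tfrac12\tr(a(x)\nabla_K^2 v(x))=-1$; I would pick $y^*$ measurably. At the unique critical maximum, if present, $\nabla_K v=0$ while Lemma~\ref{L:191122} and the PDE force $\lambda_2(\nabla_K^2 v)=-2$, and I would take $a=y^* y^{*\top}$ for a corresponding eigenvector. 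The associated SDE $dX=\sigma(X)dW$ with $\sigma\sigma^\top=a$ has only measurable coefficients, so I would obtain weak existence by smoothing and taking limits, using the compactness statement of Proposition~\ref{P_v_proporties}\ref{P_v_proporties_1} and the template of Lemma~\ref{L_weaksolution}. It\^o's formula would then give $dv(X(t))=-dt$ up to the time $X$ exits $\ri(F)$, since $a\nabla_K v=0$ eliminates the martingale part and the trace identity fixes the drift.

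Next I would glue. Starting from $\bar x\in\ri(F_0)$ with $F_0=F_{\bar x}$, I would run the face-wise solution until $X$ first enters the relative interior of a proper sub-face $F_1$ at some time $\tau_1$; continuity of $v$ gives $v(X(\tau_1))=v(\bar x)-\tau_1$. I would then concatenate with the face-wise solution on $F_1$ via a conditional product construction analogous to the one used for the measure $\Q$ in the proof of Proposition~\ref{P_v_proporties}\ref{P_v_proporties_3}, and iterate. Because $\dim(F_n)$ strictly decreases, the recursion terminates after at most $d-1$ steps in a face of dimension at most one; countability of the face set keeps all measurability statements in order, and the resulting law $\P\in\Pcal_{\bar x}$ satisfies $v(X(t))=v(\bar x)-t$ throughout the construction. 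Optimality of $\P$ follows automatically from this identity together with the definition of $v$.

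The hard part will be the weak-existence step: the coefficient $a$ is generally only measurable because the minimizing direction $y^*(x)$ jumps wherever several directions simultaneously attain the infimum, equivalently because the Moore--Penrose pseudoinverse of the curvature matrix $H(x)=\tfrac12 P_{\nabla_K v(x)}\nabla_K^2 v(x)P_{\nabla_K v(x)}+I$ is discontinuous at such points. I will need to show that the identity $v(X(t))=v(\bar x)-t$ is preserved under weak limits of smoothed approximations, using continuity of $v$, $\nabla_K v$, and $\nabla_K^2 v$ together with It\^o's formula for the approximating processes and a uniform-integrability argument. A secondary subtlety is initiating the motion at a critical maximum, which calls for a Lemma~\ref{L_weaksolution}-style construction with a carefully chosen skew-symmetric matrix, and propagating the identity across the (countably many) face transitions, which leans on the continuity of $v$ baked into the $C^2(K)$ assumption.
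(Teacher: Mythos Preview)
Your plan is essentially the paper's own strategy: build a face-wise martingale whose diffusion coefficient kills $\nabla_K v$ and produces drift $-1$ in $v$, obtain weak existence by mollifying the (discontinuous) coefficient and passing to limits via the compactness of $\Pcal_x$, and then glue across faces using a measurable selection and the conditional-product construction from Proposition~\ref{P_v_proporties}\ref{P_v_proporties_3}. Two points where the paper's execution is tidier are worth knowing. First, instead of tracking the two constraints $a\nabla_K v=0$ and $\tfrac12\tr(a\nabla_K^2 v)=-1$ separately through the limit, the paper packages them into a single matrix $H(x)=\tfrac12 P_{\nabla v}\nabla^2 v\,P_{\nabla v}+I$ (whose kernel is exactly the span of your minimizing directions $y^*$), takes $a(x)$ to be the normalized projection onto $\ker H(x)$, and shows $\int_0^t H(Y(s))\,d\langle Y\rangle(s)=0$ survives mollification because $H$ is continuous; the two constraints then drop out algebraically. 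Second, your claim that $\lambda_2(\nabla_K^2 v)=-2$ at the critical maximum is not correct---the viscosity inequalities only give $\lambda_1\ge-2\ge\lambda_2$---and more importantly, if you start at the critical point your first-exit-from-$\Ocal$ time is zero, so the It\^o argument gives nothing directly. The paper sidesteps this by first proving the identity for non-critical starting points (where $v(X(t))$ strictly decreases, so the critical point is never reached and $\tau_\Ocal=\tau_{\interior(K)}$), and then handling the critical start by taking weak limits $\P_{x_n}\Rightarrow\P$ with $x_n\to\bar x$ non-critical, using a closedness lemma for the set $\{\omega:v(\omega(t))=v(\omega(0))-t\text{ for }t<\tau_{\ri(F)}\}$.
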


Observe that the assumption that $v$ lies in $C^2(K)$ immediately implies that $v|_{\aff(F)}$ is a classical solution of \eqref{PDE_1} in $\ri(F)$ away from the critical point, for every face $F$ of dimension at least two; just use $v$ itself as test function in the definition of viscosity sub- and supersolutions.

The proof of Theorem~\ref{T_smooth_case_second} proceeds by first constructing solution laws $\P$ under which $X$ behaves in the desired manner while inside any given face $F$ of $K$. Then these laws are pasted together as $X$ reaches ever lower-dimensional faces, until it leaves $K$. To implement this idea, for any face $F$ of $K$ with $\dim(F)\ge2$ and any point $x\in \ri(F)$, we define
\[
\Pcal_x^* = \{\P\in\Pcal_x\colon \text{$v(X(t))=v(x)-t$ for all $t<\tau_{\ri(F)}$ and $X(\tau_{\ri(F)})\in\rbd(F)$}\},
\]
where $\tau_{\ri(F)}=\inf\{t\ge0\colon X(t)\notin\ri(F)\}$ is the first time $X$ leaves the relative interior of $F$. For points $x\in K^c \cup \Fcal_1$, we somewhat arbitrarily set $\Pcal_x^*=\Pcal_x$.

Let now the hypotheses of Theorem~\ref{T_smooth_case_second} be in force. Our first goal is to prove that $\Pcal_x^*$ is nonempty for every $x \in \R^d$. This rests on the following construction of a martingale with increments in the kernel of a given location-dependent matrix.

\begin{lemma}\label{L_SDE_existence}
Let $\Ocal\subset\R^d$ be open and let $H\colon\R^d\to\S^d$ be a locally bounded measurable map such that $H|_\Ocal$ is continuous and $\rk H(x)\le d-1$ for all $x\in\Ocal$. For every $\bar x\in\Ocal$ there exists a continuous martingale $Y$ with $Y(0)=\bar x$ and $\tr\langle Y\rangle(t)\equiv t$ such that
\begin{equation}\label{L_SDE_existence_1}
\int_0^t H(Y(s))d\langle Y\rangle(s) = 0, \quad t<\tau_\Ocal,
\end{equation}
where $\tau_\Ocal=\inf\{t\ge0\colon Y(t)\notin\Ocal\}$.
\end{lemma}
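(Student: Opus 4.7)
The plan is to realize $Y$ as a weak limit of solutions of SDEs with continuous coefficients that approximate the natural (but possibly discontinuous) diffusion coefficient whose image lies in $\ker H$. The hidden difficulty, already flagged in the paper's discussion, is that the orthogonal projection onto $\ker H(x)$ fails to be continuous precisely where $\rk H$ drops, so one cannot apply Skorokhod's weak existence theorem to the natural coefficient directly.

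Let $\pi(x)$ denote the orthogonal projection onto $\ker H(x)$ and define, for $x\in\Ocal$,
\[
a(x)=\pi(x)/(d-\rk H(x))\in\S^d_+,
\]
extended measurably to $\R^d$ by $a\equiv I/d$ on $\Ocal^c$. Then $a$ is Borel measurable and bounded, has unit trace, and satisfies $H a=0$ on $\Ocal$. To mollify, fix $\psi\in C^\infty([0,\infty);(0,1])$ with $\psi(0)=1$ and $\sup_{\lambda\ge 0}\lambda\psi(n\lambda^2)\to 0$, and pick cutoffs $\chi_n\in C_c^\infty(\Ocal)$ with $\chi_n\uparrow 1$ pointwise on $\Ocal$. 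Define
\[
\widetilde a_n(x)=\chi_n(x)\psi(nH(x)^2)+(1-\chi_n(x))I,\qquad a_n(x)=\widetilde a_n(x)/\tr\widetilde a_n(x),
\]
where the first summand is read as zero outside $\Ocal$. Functional calculus and continuity of $H|_\Ocal$ make $a_n$ continuous on $\R^d$ and $\S^d_+$-valued with $\tr a_n\equiv 1$, and a direct spectral estimate gives $\sup_{x\in K}|H(x)a_n(x)|\to 0$ for every compact $K\subset\Ocal$.

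For each $n$, let $\sigma_n=a_n^{1/2}$, which is continuous and bounded by $1$ in operator norm. Skorokhod's existence theorem yields a weak solution $Y_n$ of $dY_n=\sigma_n(Y_n)\,dW$ with $Y_n(0)=\bar x$; call its law $\Q_n$. Since $\tr a_n\equiv 1$, the Kolmogorov/Arzelà–Ascoli argument from the proof of Proposition~\ref{P_v_proporties}\ref{P_v_proporties_1} shows that $\{\Q_n\}$ is tight; extract a weak limit $\Q$ along a subsequence. Under $\Q$, the coordinate process $Y$ is a continuous martingale with $Y(0)=\bar x$ and $\tr\langle Y\rangle(t)\equiv t$, by the same weak-limit argument as in that proposition.

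The remaining step, and the one I expect to be the main obstacle, is verifying $\int_0^t H(Y(s))\,d\langle Y\rangle(s)=0$ for $t<\tau_\Ocal$, $\Q$-a.s. Fix a compact $K\subset\Ocal$ containing $\bar x$ in its interior, and let $\tau_K^n,\tau_K$ be the corresponding first exit times. Since $d\langle Y_n\rangle(s)=a_n(Y_n(s))\,ds$, pathwise
\[
\Big|\int_0^{t\wedge\tau_K^n} H(Y_n(s))\,d\langle Y_n\rangle(s)\Big|\;\le\; t\,\sup_{x\in K}|H(x)a_n(x)|\;\xrightarrow[n\to\infty]{}\;0.
\]
On the other hand, the joint weak convergence $(Y_n,\langle Y_n\rangle)\Rightarrow(Y,\langle Y\rangle)$, which follows from the uniform moment bounds on $Y_n$ and standard stability results for continuous martingales, together with the continuity of $H$ on $K$, implies distributional convergence of the integral on the left to $\int_0^{t\wedge\tau_K}H(Y(s))\,d\langle Y\rangle(s)$ (some care is needed at $\partial K$ to upgrade continuity in $\omega$ of $\tau_K$ generically via a Portmanteau argument). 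Hence this integral vanishes $\Q$-a.s., and exhausting $\Ocal$ by an increasing sequence of such compacts delivers the identity on $[0,\tau_\Ocal)$.
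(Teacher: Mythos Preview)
Your proof follows the same three-step skeleton as the paper's: construct continuous unit-trace approximants $a_n$ of the (discontinuous) kernel projection, solve approximate SDEs, and pass to a weak limit. The genuine difference lies in how you mollify. The paper uses ordinary convolution, $a_n=a*\varphi_n$, and then shows $H(x_n)a_n(x_n)\to 0$ along any sequence $x_n\to x\in\Ocal$ by writing $H(x_n)a_n(x_n)=\int\varphi_n(x_n-y)(H(x_n)-H(y))a(y)\,dy$ and exploiting continuity of $H|_\Ocal$. Your functional-calculus construction $\psi(nH^2)$ is more direct: it remains continuous because $H|_\Ocal$ is, the trace is bounded below by $1$ since $\rk H\le d-1$ forces $\psi(0)=1$ to appear as an eigenvalue, and the spectral bound $|H\psi(nH^2)|\le\sup_{\lambda\ge0}\lambda\psi(n\lambda^2)$ immediately gives \emph{uniform} convergence $Ha_n\to0$ on compacts of $\Ocal$, which is slightly stronger than what the paper extracts.

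For the limit passage, the paper sidesteps the $\tau_K$-continuity issue you flag by invoking Skorohod's representation theorem: with almost-sure locally uniform convergence $(Y_n,\langle Y_n\rangle)\to(Y,\langle Y\rangle)$ in hand, for each fixed $t<\tau_\Ocal$ the set $Y([0,t])$ is compact in $\Ocal$, so $Y_n([0,t])\subset\Ocal$ for all large $n$, and the integral $\int_0^t H(Y_n)\,d\langle Y_n\rangle$ converges pathwise by bounded convergence and continuity of $H|_\Ocal$. Adopting this device would make your final paragraph fully rigorous without needing a Portmanteau argument at $\partial K$.
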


\begin{proof}
Define
\[
a(x) = \begin{cases}{\displaystyle\frac{I - H(x)^+ H(x)}{d-\rk H(x)}}, & x\in\Ocal\\ d^{-1} I,&x\notin\Ocal \end{cases}
\]
where $H(x)^+$ is the Moore--Penrose generalized inverse of $H(x)$. Thus if the spectral decomposition of $H(x)$ is $H(x)=Q\diag(\lambda_1,\ldots,\lambda_r,0,\ldots,0) Q^\top$ with $\lambda_i\ne0$ for $i=1,\ldots,r$, then $H(x)^+H(x)= Q \diag(1,\ldots,1,0,\ldots,0) Q^\top$, where the diagonal matrix contains $r$ ones. It follows that
\[
\text{$a(x)\succeq0$ and $\tr(a(x))=1$ for all $x\in\R^d$, and $H(x)a(x)=0$ for all $x\in\Ocal$.}
\]

Unless $H$ has constant rank on $\Ocal$, $a$ is not continuous on $\Ocal$. Consider therefore mollifications
\[
a_n(x) = \int_{\R^d} \varphi_n(x-y) a(y) dy,
\]
where $\varphi_n(x)=n^d\varphi(nx)$ for a positive mollifier $\varphi$ supported on the centered unit ball. Then $a_n$ is continuous, positive semidefinite, and has unit trace. Thus there exist weak solutions $Y_n$ of the SDEs
\[
dY_n(t) = a_n(Y_n(t))^{1/2} dW(t), \quad Y_n(0)=\bar x,
\]
where the positive semidefinite square root is understood, and $W$ is $d$-dimensional Brownian motion. The law of $Y_n$ lies in $\Pcal_{\bar x}$ for each $n$, so Proposition~\ref{P_v_proporties}\ref{P_v_proporties_1} shows that after passing to a subsequence, $Y_n\Rightarrow Y$ for some limiting process $Y$ whose law again lies in $\Pcal_{\bar x}$. Since $\langle Y_n\rangle(t)=\int_0^t a_n(Y_n(s))ds$ and the $a_n$ are uniformly bounded, after passing to a further subsequence we actually have $(Y_n,\langle Y_n\rangle)\Rightarrow(Y,Q)$ in the space $C(\R_+,\R^d\times\S^d)$ for some process $Q$. Since $Y_nY_n^\top-\langle Y_n\rangle$ is a martingale for each $n$, and using the uniform bound on the quadratic variations, we may pass to the limit to deduce that $YY^\top-Q$ is also martingale, and hence $Q=\langle Y\rangle$. Furthermore, by Skorohod's representation theorem (see \citet[Theorem~6.7]{B:99}), we may assume that the $(Y_n,\langle Y_n\rangle)$ and $(Y,\langle Y\rangle)$ are defined on a common probability space $(\Omega',\Fcal',\P')$ and that, almost surely, $(Y_n,\langle Y_n\rangle)\to(Y,\langle Y\rangle)$ in $C(\R_+,\R^d\times\S^d)$, that is, locally uniformly.

We now verify \eqref{L_SDE_existence_1}. We first claim that
\begin{equation}\label{eq_L_SDE_existence_pf1}
\text{if $x\in\Ocal$ and $x_n\to x$ then $H(x_n)a_n(x_n)\to0$.}
\end{equation}
To prove this, note that
\[
H(x_n)a_n(x_n) = \int_{\R^d} \varphi_n(x_n-y)H(x_n)a(y)dy = \int_{\R^d} \varphi_n(x_n-y)(H(x_n)-H(y))a(y)dy.
\]
Since $a$ is bounded  and the restriction $H|_\Ocal$ is continuous, arguing component by component, we see that the right-hand side converges to zero.
This proves \eqref{eq_L_SDE_existence_pf1}. Now pick $t<\tau_{\Ocal}$. Then $Y(s)\in\Ocal$ for all $s\le t$. Since $(Y_n,\langle Y_n \rangle)\to(Y,\langle Y\rangle)$ locally uniformly, the bounded convergence theorem and \eqref{eq_L_SDE_existence_pf1} yield that
\[
\int_0^t H(Y_n(s)) d\langle Y_n\rangle(s) = \int_0^t H(Y_n(s)) a_n(Y_n(s)) ds \to 0.
\]
On the other hand, the left-hand side converges to $\int_0^t H(Y(s)) d\langle Y\rangle(s)$. This yields \eqref{L_SDE_existence_1} and completes the proof of the lemma.
\end{proof}

\begin{remark}
An examination of the proof of Lemma~\ref{L_SDE_existence} shows that the process $Y$ is of the form $dY_t=\sigma_t dW'_t$ for some Brownian motion $W'$, where $\sigma_t=a(Y_t)^{1/2}$ for all $t$ such that $a$ is continuous at $Y_t$. By properties of the Moore--Penrose inverse, $a$ is continuous except on the boundaries of the sets $\{x\in\Ocal\colon \rk H(x)=r\}$, $r=0,\ldots,d-1$ and on $\partial \Ocal$. Thus if $Y$ can be shown to spend zero time in these sets, it is a bona fide weak solution of $dY_t=a(Y_t)^{1/2}dW'_t$.
\end{remark}

\begin{proposition}\label{P_Pxstar_nonempty}
Continue to assume $v\in C^2(K)$. Then $\Pcal_{\bar x}^*$ is nonempty for every $\bar x\in\R^d$.
\end{proposition}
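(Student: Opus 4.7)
If $\bar x \in K^c \cup \Fcal_1$, then $\Pcal_{\bar x}^* = \Pcal_{\bar x}$ by definition, and this set is nonempty: the law of $\bar x + d^{-1/2}W$, with $W$ a standard $d$-dimensional Brownian motion, provides an element. The real work is when $\bar x \in \ri(F)$ for a face $F$ of dimension $k := \dim(F) \geq 2$. I would work in the $k$-dimensional affine subspace $\aff(F)$, identified with $\R^k$ via an isometry, and construct a continuous $\R^k$-valued martingale $Y$ starting at $\bar x$ with $\tr\langle Y\rangle(t) \equiv t$ such that $v(Y(t)) = v(\bar x) - t$ for $t < \tau_{\ri(F)}$ and $Y(\tau_{\ri(F)}) \in \rbd(F)$. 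Embedding $Y$ back into $\R^d$ and extending past $\tau_{\ri(F)}$ by an independent scaled Brownian motion then produces the required element of $\Pcal_{\bar x}^*$.

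As noted immediately before Theorem~\ref{T_smooth_case_second}, since $v \in C^2(K)$ is a viscosity solution, the restriction $v|_{\aff(F)}$ is a classical solution of the intrinsic PDE on $\ri(F)$ at every non-critical point. Equivalently, the $k \times k$ matrix
\[
H(x) := \tfrac{1}{2}\, P_{\nabla_K v(x)}\,\nabla_K^2 v(x)\, P_{\nabla_K v(x)} + I_k
\]
has rank at most $k-1$ at such points, with $\nabla_K v(x)$ an eigenvector of eigenvalue one. Let $\Ocal := \ri(F) \setminus \{x^*\}$ (omitting $\{x^*\}$ only when $v|_F$ has a critical point $x^*$), and extend $H$ to $\R^k$ as $I_k$ off $\Ocal$, so that $H|_\Ocal$ is continuous and $H$ is globally bounded. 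Provided $\bar x \in \Ocal$, Lemma~\ref{L_SDE_existence} applied on $\Ocal$ yields a continuous martingale $Y$ with $Y(0) = \bar x$, $\tr\langle Y\rangle(t) \equiv t$, and $H(Y(s))\,a(s) = 0$ for $ds$-a.e.\ $s < \tau_\Ocal$, where $a(s) = d\langle Y\rangle/ds$.

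The identity $H\nabla_K v = \nabla_K v$ (from $P_{\nabla_K v}\nabla_K v = 0$) makes $\nabla_K v$ a unit-eigenvalue eigenvector of the symmetric matrix $H$, so $\ker H$ is orthogonal to $\nabla_K v$, and the relation $Ha = 0$ therefore forces $a\,\nabla_K v = 0$. Applying It\^o's formula to $v\circ Y$ then yields a martingale part whose quadratic variation $\int_0^\cdot \nabla_K v^\top a\,\nabla_K v\, ds$ vanishes identically (so this part is zero), and a drift equal to $\tfrac12 \tr(\nabla_K^2 v \cdot a)\,dt$. Since $Pa = aP = a$ (because the image of $a$ lies in $\nabla_K v^\perp$) and $P\nabla_K^2 v P = 2(H - I_k)$, this drift simplifies to $\tr((H - I_k) a)\,dt = -\tr(a)\,dt = -dt$. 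Hence $v(Y(t)) = v(\bar x) - t$ for $t < \tau_\Ocal$. Since $x^*$ is a maximum and $v(\bar x) < v(x^*)$, the process $Y$ cannot reach $x^*$ prior to leaving $\ri(F)$ (else $\tau_{x^*} = v(\bar x) - v(x^*) < 0$), so $\tau_\Ocal = \tau_{\ri(F)}$ a.s., and $Y(\tau_{\ri(F)}) \in \rbd(F)$ follows by continuity.

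Finally, for $\bar x = x^*$ I would use approximation: choose $\bar x_n \in \Ocal$ with $\bar x_n \to x^*$, let $\P_n \in \Pcal_{\bar x_n}^*$ be the laws constructed above, and extract a weak subsequential limit $\P_n \Rightarrow \P \in \Pcal_{x^*}$ via the tightness argument of Proposition~\ref{P_v_proporties}\ref{P_v_proporties_1} applied uniformly over a neighborhood of $x^*$. This last step is the main obstacle: because $\tau_{\ri(F)}$ is only upper semicontinuous on $\Omega$ (Lemma~\ref{L_usc_tauK}), the $\P_n$-a.s.\ identity $v(X(t \wedge \tau_{\ri(F)})) + t \wedge \tau_{\ri(F)} = v(\bar x_n)$ does not transfer to $\P$ directly. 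I would instead combine continuity of $v$, Fatou-type arguments, and the dynamic programming principle to deduce both inequalities $v(X(t \wedge \tau_{\ri(F)})) + t \wedge \tau_{\ri(F)} \leq v(x^*)$ and $\geq v(x^*)$ under $\P$, using Lemma~\ref{L_E[theta]} to guarantee $\tau_{\ri(F)} < \infty$ almost surely. Ruling out that the limit $\P$ concentrates a trivial trajectory stuck at $x^*$ is the other delicate point; here the fact that $v(x^*) = \max_F v > 0$ forces any such trajectory to violate the approximating identity.
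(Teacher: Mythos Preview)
Your argument for non-critical $\bar x$ is the paper's argument: same matrix $H$, same application of Lemma~\ref{L_SDE_existence}, same It\^o computation yielding $dv(Y(t)) = -dt$ on $[0,\tau_\Ocal)$, and the same observation that $Y$ cannot reach the maximizer so that $\tau_\Ocal = \tau_{\ri(F)}$. (Whether $H$ is extended by $I_k$ or by $0$ off $\Ocal$ is immaterial, since Lemma~\ref{L_SDE_existence} only uses $H|_\Ocal$.)

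The gap is at the critical point $x^*$. Your proposed route---Fatou-type inequalities combined with the dynamic programming principle to get two-sided bounds on $v(X(t\wedge\tau_{\ri(F)})) + t\wedge\tau_{\ri(F)}$---is only a sketch, and the DPP has no evident role here. The paper instead proves (Lemma~\ref{L_CF_closed}, stated just after this proposition) that the set
\[
C_F = \bigl\{\omega \in \Omega : v(\omega(t)) = v(\omega(0)) - t \ \forall\, t < \tau_{\ri(F)}(\omega),\ \text{and } \omega(\tau_{\ri(F)}(\omega)) \in \rbd(F) \text{ if } \tau_{\ri(F)}(\omega) < \infty\bigr\}
\]
is \emph{closed} in $\Omega$. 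This is a short direct argument using only continuity of $v$; upper semicontinuity of $\tau_{\ri(F)}$ turns out not to be an obstruction once one tracks $\liminf_n \tau_{\ri(F)}(\omega_n)$ along a convergent sequence. With $C_F$ closed, the Portmanteau lemma gives $\P(C_F) \ge \limsup_n \P_n(C_F) = 1$, and the proof is finished in one line. Your concern about the limit concentrating on a trajectory ``stuck at $x^*$'' is also misplaced: any law in $\Pcal_{x^*}$ has $\tr\langle X\rangle(t) \equiv t$, so constant paths carry zero mass automatically.
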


\begin{proof}
If $\bar x\in K^c\cup\Fcal_1$, then $\Pcal_{\bar x}^*=\Pcal_{\bar x}$ and the statement is obvious. Below we prove the statement for $\bar x\in\interior(K)$; the case $\bar x\in\ri(F)$ for a face $F$ with $\dim(F)\ge2$ is identical since all considerations are then restricted to $\aff(F)$. So suppose $\bar x\in\interior(K)$ and, initially, also that $\bar x$ is not the maximizer of $v$ over $K$; in particular $\bar x$ is not a critical point in $K$.

Since $v$ lies in $C^2(K)$, it is a classical solution of \eqref{PDE_1} in $\interior(K)$ away from the critical point. As explained in Section~\ref{S_main}, an alternative form of this equation at non-critical points is \eqref{eq_PDE_lambda_intro}. That is,
\begin{equation}\label{eq_PDE_lambda}
\lambda_{\rm min}\left(-\frac12 P_{\nabla v(x)} \nabla^2v(x) P_{\nabla v(x)}, \nabla v(x)\right) = 1,
\end{equation}
where $\lambda_{\rm min}(A,p)$ denotes the smallest eigenvalue of $A$ corresponding to an eigenvector orthogonal to $p$, and
\[
P_{\nabla v(x)} = I - \frac{\nabla v(x)\nabla v(x)^\top}{|\nabla v(x)|^2}.
\]
Let $\Ocal = \{x\in\interior(K)\colon \nabla v(x)\ne0\}$ be the set of non-critical points in $\interior(K)$. Define
\[
H(x) = \frac12 P_{\nabla v(x)} \nabla^2v(x) P_{\nabla v(x)} + I, \quad x\in\Ocal,
\]
and arbitrarily set $H(x)=0$ for $x\notin\Ocal$. It is clear that $H$ is locally bounded measurable and that $H|_{\Ocal}$ is continuous. Moreover, the equation \eqref{eq_PDE_lambda} satisfied by $v$ implies that $H(x)$ is singular, i.e.\ $\rk H(x)\le d-1$, for all $x\in\Ocal$. We may thus apply Lemma~\ref{L_SDE_existence} to obtain a martingale $Y$ whose law we denote by $\P$. Clearly $\P\in\Pcal_{\bar x}$, and due to  \eqref{L_SDE_existence_1} we have
\[
\text{$H(X(t))a(t) = 0$ on $[0,\tau_\Ocal)$, $\P$-a.s.,}
\]
where $a(t)$ satisfies $\langle X\rangle = \int_0^\fdot a(s) ds$ and $\tr(a(t))=1$, and $\tau_\Ocal=\inf\{t\ge0\colon X(t)\notin\Ocal\}$. As a consequence, omitting the argument $X(t)$ for readability, we have for $t<\tau_\Ocal$ that
\[
0 = \nabla v^\top H a(t) = \frac12 \nabla v^\top P_{\nabla v}\nabla^2v P_{\nabla v}a(t) + \nabla v^\top a(t) = \nabla v^\top a(t).
\]
We thus have $\nabla v^\top a(t)=0$, which yields $P_{\nabla v}a(t)P_{\nabla v}=a(t)$. Consequently,
\[
0 = \tr(H a(t)) = 1 + \tr\left(\frac12 P_{\nabla v} \nabla^2v P_{\nabla v}a(t)\right) = 1 + \frac12  \tr( a(t)\nabla^2v).
\]
An application of It\^o's formula now gives
\[
dv(X(t)) = \nabla v(X(t))^\top dX(t) + \frac12  \tr( a(t)\nabla^2v(X(t)))dt = -dt.
\]
These computations are valid for $t<\tau_\Ocal$, so we deduce that $v(X(t)) = v(\bar x)-t$ for $t<\tau_\Ocal$. In particular, $X(t)$ will not attain a critical point before $\tau_\Ocal$, so in fact $\tau_\Ocal=\tau_{\interior(K)}$, the first exit time from $\interior(K)$. Moreover, at the exit time, we have $X(\tau_{\interior(K)})\in\partial K$. This shows that $\P\in\Pcal_{\bar x}^*$, as desired.

The case where $\bar x$ is a critical point still remains. In this case, we select points $x_n\in\interior(K)\setminus\{\bar x\}$ with $x_n\to\bar x$, and let $\P_n\in\Pcal_{x_n}^*$. In particular, the laws $\Q_n=(\fdot-x_n)_*\P_{x_n}$ lie in $\Pcal_0$, which is  compact by Proposition~\ref{P_v_proporties}\ref{P_v_proporties_1}. The $\Q_n$ are thus subsequentially convergent toward some $\Q\in\Pcal_0$. Along this subsequence, the $\P_n$ converge to $\P=(\fdot+\bar x)_*\Q \in \Pcal_{\bar x}$. Lemma~\ref{L_CF_closed} below shows that the properties $v(X(t))=v(X(0))-t$ for all $t<\tau_{\interior(K)}$ and $X(\tau_{\interior(K)})\in \partial K$ if $\tau_{\interior(K)} < \infty$ carry over to weak limits. This shows that $\P\in\Pcal_x^*$, and completes the proof of the proposition.
\end{proof}

We now turn to the task of pasting solutions together as $X$ reaches ever lower-dimensional faces of $K$. This uses a measurable selection of laws from $\Pcal_x^*$, which in turn requires suitable closedness properties of these sets. The following closedness result was already used in the proof of Proposition~\ref{P_Pxstar_nonempty}.

\begin{lemma}\label{L_CF_closed}
Let $F$ be a face of $K$ and write $\sigma=\tau_{\ri(F)}$ for brevity. Then the set
\[
C_F = \{\omega\in\Omega\colon \text{$v(\omega(t))=v(\omega(0))-t$ $\forall\,t<\sigma(\omega)$, and $\omega(\sigma(\omega))\in\rbd(F)$ if $\sigma(\omega)<\infty$}\}
\]
is closed in $\Omega$. As a consequence, $\{\P\in\Pcal(\Omega)\colon \P(C_F)=1\}$ is closed in $\Pcal(\Omega)$. The same conclusion holds if $v$ is only known to be continuous, not necessarily $C^2(K)$.
\end{lemma}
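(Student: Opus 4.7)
The plan is to prove that $C_F$ is a closed subset of $\Omega$; the second assertion then follows from the Portmanteau theorem, since closedness of $C_F$ implies that $\P\mapsto\P(C_F)$ is upper semicontinuous on $\Pcal(\Omega)$, so $\{\P\colon\P(C_F)=1\}$ is closed.

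Fix a sequence $\omega_n\in C_F$ converging locally uniformly to some $\omega$, and write $\sigma_n=\sigma(\omega_n)$ and $\sigma=\sigma(\omega)$. The first step I carry out is the liminf inequality $\liminf_n\sigma_n\ge\sigma$: if it failed, a subsequence $\sigma_{n_k}\to\tau^*<\sigma$ would give $\omega_{n_k}(\sigma_{n_k})\to\omega(\tau^*)$ by local uniform convergence together with continuity of $\omega$; since $\omega_{n_k}(\sigma_{n_k})\in\rbd(F)$ and $\rbd(F)$ is closed, this would force $\omega(\tau^*)\in\rbd(F)$, contradicting $\omega(\tau^*)\in\ri(F)$ (which holds because $\tau^*<\sigma$) together with $\rbd(F)\cap\ri(F)=\emptyset$. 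Once this bound is in hand, for every $t<\sigma$ one has $t<\sigma_n$ eventually, so the identity $v(\omega_n(t))=v(\omega_n(0))-t$ holds for all large $n$. Since $\omega_n(t),\omega(t)\in F\subset K$ and $v|_K$ is continuous, I pass to the limit to obtain $v(\omega(t))=v(\omega(0))-t$, verifying the first defining property of $C_F$ for $\omega$.

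The delicate step is the exit condition in the case $\sigma<\infty$. By the infimum definition of $\sigma$, there exist $t_k\searrow\sigma$ with $\omega(t_k)\notin\ri(F)$; passing to a subsequence, either (i) $\omega(t_k)\in\rbd(F)$ for all $k$, or (ii) $\omega(t_k)\notin F$ for all $k$. In case (i), closedness of $\rbd(F)$ and continuity of $\omega$ immediately give $\omega(\sigma)\in\rbd(F)$. In case (ii), closedness of $F$ forces $\omega_n(t_k)\notin F$ for all large $n$ at each fixed $k$, so $\sigma_n\le\tau_F(\omega_n)\le t_k$ eventually; letting $n\to\infty$ and then $k\to\infty$ yields $\limsup_n\sigma_n\le\sigma$, which combined with the liminf bound gives $\sigma_n\to\sigma$. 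Then $\omega_n(\sigma_n)\to\omega(\sigma)$ by uniform convergence together with continuity of $\omega$, and $\omega(\sigma)\in\rbd(F)$ follows from $\omega_n(\sigma_n)\in\rbd(F)$ and closedness of $\rbd(F)$.

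The main obstacle I expect is precisely the fact that $\ri(F)$ is only relatively open in $\aff(F)$, not open in $\R^d$: the path $\omega$ may exit $\ri(F)$ either by striking $\rbd(F)$ (case (i)) or by transversally leaving $\aff(F)$ altogether (case (ii)), and these two mechanisms must be handled separately. Since nothing in the argument uses more than closedness of $F$ and $\rbd(F)$ together with continuity of $v|_K$, the final assertion---that continuity of $v$ alone already suffices---is built into the proof.
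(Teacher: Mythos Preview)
Your argument is correct and follows essentially the same route as the paper. Both proofs establish $\sigma(\omega)\le\liminf_n\sigma(\omega_n)$ by exploiting that $\omega_n(\sigma_n)\in\rbd(F)$ together with closedness of $\rbd(F)$, and then transfer the identity $v(\omega_n(t))=v(\omega_n(0))-t$ to the limit by continuity of $v$. The only organizational difference is in the exit condition: the paper works with the subsequential limit $T=\lim_n\sigma_n$ and uses $\min_{t\le T+\varepsilon}\mathrm{dist}(\omega(t),\rbd(F))=0$, whereas you make the dichotomy between exiting through $\rbd(F)$ and leaving $F$ transversally explicit via your cases (i)/(ii). Your case split is arguably more transparent about why the relative (rather than absolute) openness of $\ri(F)$ is the crux, which you correctly flag as the main subtlety.
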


\begin{proof}
It suffices to prove that $C_F$ is closed, as the second statement then follows from the Portmanteau lemma. Pick $\omega_n\in C_F$ with $\omega_n\to\omega$ in $\Omega$. Define $T=\liminf_n\sigma(\omega_n)\in[0,\infty]$, and pass to a subsequence to get $T=\lim_n\sigma(\omega_n)$. Then $v(\omega_n(t))=\omega_n(0)-t$ and $\omega_n(t)\in F$ if $n$ is sufficiently large, for all $t<T$. Since $v$ is continuous and $F$ closed, we get $v(\omega(t))=v(\omega(0))-t$ and $\omega(t)\in F$ for all $t<T$. Provided $\sigma(\omega)\le T$, this implies $\omega\in C_F$ and proves closedness. If $T=\infty$ then of course $\sigma(\omega)\le T$. If $T<\infty$, then by definition of $T$ we have $\sigma(\omega_n)\le T+\varepsilon$ for any $\varepsilon>0$ and all large $n$. Thus $\min_{t\le T+\varepsilon}{\rm dist}(\omega_n(t),\rbd(F))=0$ for all large $n$. By continuity we get $\min_{t\le T+\varepsilon}{\rm dist}(\omega(t),\rbd(F))=0$, and hence $\sigma(\omega)\le T+\varepsilon$. Since $\varepsilon>0$ was arbitrary, this yields $\sigma(\omega)\le T$ as required.
\end{proof}

The following lemma produces the required measurable selection. This is actually the only step that uses that $K$ has countably many faces. If the lemma could be established without assuming this, the assumption could be dropped from Theorem~\ref{T_smooth_case_second} (and Theorem~\ref{T_smooth_case}). In fact, the current proof works for the more general situation where $K$ has countably many faces of dimension two and higher, and arbitrarily many faces of dimension zero and one.

\begin{lemma}\label{L_Pxstar_mb}
Assume $K$ has countably many faces, and continue to assume $v\in C^2(K)$. Them there is a measurable map $x\mapsto\P_x$ from $\R^d$ to $\Pcal(\Omega)$ such that $\P_x\in\Pcal_x^*$ for all $x$.
\end{lemma}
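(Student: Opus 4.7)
The plan is to exploit the countable-faces hypothesis by partitioning $\R^d$ into countably many Borel pieces on which a measurable selection can be constructed separately, and then pasting. Let $F_1, F_2, \ldots$ enumerate the faces of $K$, and note that every point of $K$ lies in the relative interior of a unique face, so
\[
\R^d = K^c \cup \bigcup_n \ri(F_n)
\]
is a Borel partition (each $\ri(F_n)$ is the relative interior of a convex set, hence Borel). On $K^c$ and on any $\ri(F_n)$ with $\dim(F_n) \leq 1$, we have $\Pcal_x^* = \Pcal_x$ by definition, and a Borel selection already exists by Proposition~\ref{P_v_proporties}\ref{P_v_proporties_2}, whose restriction to these pieces serves.

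The substantive case is a face $F$ with $\dim(F) \geq 2$. I would study the correspondence $\Psi \colon \ri(F) \rightrightarrows \Pcal(\Omega)$ defined by $\Psi(x) = \Pcal_x^*$. Nonemptiness is Proposition~\ref{P_Pxstar_nonempty}. The set $\Pcal_x$ is compact by Proposition~\ref{P_v_proporties}\ref{P_v_proporties_1} (translated from $\Pcal_0$), and
\[
\Pcal_x^* = \Pcal_x \cap \{\P \in \Pcal(\Omega) \colon \P(C_F) = 1\},
\]
where the latter factor is closed by Lemma~\ref{L_CF_closed}; hence each $\Psi(x)$ is compact. The key step is to check that $\Psi$ has closed graph: if $x_n \to x$ in $\ri(F)$ and $\P_n \to \P$ weakly with $\P_n \in \Psi(x_n)$, then $\P \in \Pcal_x$ by continuity of the map $(x, \Q) \mapsto (x + \fdot)_* \Q$ from $\R^d \times \Pcal(\Omega)$ to $\Pcal(\Omega)$, while $\P(C_F) \geq \limsup_n \P_n(C_F) = 1$ by the Portmanteau direction for closed sets underlying Lemma~\ref{L_CF_closed}; thus $\P \in \Psi(x)$. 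A closed-graph, compact-valued correspondence from a Borel subset of a Polish space into a Polish space admits a Borel-measurable selection by a standard result (see, e.g., \citet[Proposition~7.33]{ber_shr_78}), yielding a Borel selection $x \mapsto \P_x \in \Pcal_x^*$ on each $\ri(F_n)$ with $\dim(F_n) \geq 2$.

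Pasting these countably many partial selections through the Borel partition above produces a globally Borel-measurable map $x \mapsto \P_x$ with $\P_x \in \Pcal_x^*$ for every $x \in \R^d$, as required. The main obstacle is the closed-graph property within each face, and this is precisely where restricting the domain of $\Psi$ to a single $\ri(F)$ (so that the target face, and hence the set $C_F$, does not change along a converging sequence of starting points) combines with Lemma~\ref{L_CF_closed} to carry the day. The countable-faces hypothesis enters only to guarantee that the partition has countably many pieces, so that the pasted selection is again Borel measurable; if the lemma could be established without this restriction, the hypothesis could indeed be dropped from Theorem~\ref{T_smooth_case_second}.
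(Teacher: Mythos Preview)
Your argument is correct and reaches the conclusion by a genuinely different route than the paper. The paper works globally: it verifies weak measurability of $x\mapsto\Pcal_x^*$ on all of $\R^d$ at once by writing $\Pcal_x^*=\Pcal_x\cap\varphi(x)$, where $\varphi$ is the set-valued map that equals $\{\P:\P(C_F)=1\}$ on $\ri(F)$ and $\Pcal(\Omega)$ elsewhere; since $\varphi$ is piecewise constant with countably many values (this is where countability of faces enters), it is weakly measurable, and then an intersection lemma for measurable correspondences combined with Kuratowski--Ryll-Nardzewski finishes. You instead partition first, establish a closed-graph property of $x\mapsto\Pcal_x^*$ on each $\ri(F)$ separately (using continuity of translation and closedness of $C_F$), obtain a selection face by face, and paste over the countable partition. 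Your approach avoids the intersection lemma but uses slightly more structure, namely that $\Pcal_x$ varies continuously with $x$; both approaches exploit the countable-faces hypothesis in the same essential way.

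One minor point: the selection result you invoke requires the codomain to be compact, which $\Pcal(\Omega)$ is not. This is easily repaired: for each face $F$ the values $\Psi(x)$, $x\in\ri(F)$, are all contained in the compact set $\bigcup_{y\in F}\Pcal_y$ (the continuous image of $F\times\Pcal_0$ under $(y,\Q)\mapsto(y+\fdot)_*\Q$), and restricting the codomain to this set preserves the closed graph and makes \citet[Proposition~7.33]{ber_shr_78} directly applicable. With that adjustment your proof is complete.
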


\begin{proof}
We apply the selection theorem of Kuratowski and Ryll-Nardzewski; see \citet[Theorem~18.13]{ali_bor_06}. This requires that the set-valued map $x\mapsto\Pcal_x^*$ be weakly measurable with nonempty closed values. By Proposition~\ref{P_Pxstar_nonempty}, $\Pcal_x^*$ is nonempty for all $x$. For $x\in K^c\cup\Fcal_1$, $\Pcal_x^*=\Pcal_x$ is closed (even compact) by Proposition~\ref{P_v_proporties}\ref{P_v_proporties_1}. If $F$ is a face of $K$ with $\dim(F)\ge2$ and $x\in\ri(F)$, then
\[
\Pcal_x^*=\Pcal_x\cap\{\P\in\Pcal(\Omega)\colon \P(C_F)=1\},
\]
which is closed by Lemma~\ref{L_CF_closed}. So $\Pcal_x^*$ is closed for all $x$. 

We now argue weak measurability, initially for the map $x\mapsto\Pcal_x$. We must show that for every open subset $U\subset\Pcal(\Omega)$, the set $\{x\in\R^d\colon \Pcal_x\cap U\ne\emptyset\}$ is measurable; see \citet[Definition~18.1]{ali_bor_06}. But since $\Pcal_x=(\fdot+x)_*\Pcal_0$, the condition $\Pcal_x\cap U\ne\emptyset$ means that there exists $\P\in\Pcal_0$ such that $(\fdot+x)_*\P\in U$. If this holds for some $x\in\R^d$, then it also holds for all $y$ in a neighborhood of $x$ since $U$ is open and $x\mapsto(\fdot+x)_*\P$ is continuous. Thus $\{x\in\R^d\colon \Pcal_x\cap U\ne\emptyset\}$ is actually open, and in particular measurable. So $x\mapsto\Pcal_x$ is weakly measurable. 

Furthermore, the set-valued map $x\mapsto\varphi(x)$ specified by $\varphi(x)=\Pcal(\Omega)$ for $x\in K^c\cup\Fcal_1$ and $\varphi(x)=\{\P\in\Pcal(\Omega)\colon \P(C_F)=1\}$ for $x\in\ri(F)$ is constant on $K^c$ and on each face of $K$. Since $K$ has countably many faces, we deduce that $x\mapsto\varphi(x)$ is weakly measurable. By \citet[Lemma~18.4(3)]{ali_bor_06}, it now follows that $x\mapsto\Pcal_x^*=\Pcal_x\cap\varphi(x)$ is weakly measurable, as required.
\end{proof}

\begin{proof}[Proof of Theorem~\ref{T_smooth_case_second}]
For $k=2,\ldots,d$, define $U_k=\Fcal_k\setminus\Fcal_{k-1}$. Equivalently, $U_k$ is the (possibly empty) union of the relative interiors of all $k$-dimensional faces of $K$. We work on the $d$-fold product $\Omega^d=C(\R_+,\R^d)^d$ of the canonical path space, and let $(W,Y^2,\ldots,Y^d)$ be the $(\R^d)^d$-valued coordinate process. Let $x\mapsto\P_x\in\Pcal_x^*$ be the measurable map given by Lemma~\ref{L_Pxstar_mb}; we will use it to specify the law of $Y^2,\ldots,Y^d$. Define random times $\tau_{k-1}=\inf\{t\ge0\colon Y^k(t)\notin U_k\}$. Given $\bar x\in K$, let $Y^d$ have law $\P_{\bar x}$. Next, if the law of $(Y^d,\ldots,Y^k)$ has been specified for $k\ge3$, then specify the law of $Y^{k-1}$ to be conditionally independent of $(Y^d,\ldots,Y^k)$ given $Y^k(\tau_{k-1})$, which is finite almost surely, with law $Y^{k-1}\sim\P_{Y^k(\tau_{k-1})}$. That is, the regular conditional distribution of $Y^{k-1}$ given $Y^k(\tau_{k-1})=y$ is $\P_y$. This procedure specifies the law of $Y^2,\ldots,Y^d$. Finally, let $W$ have the law of an independent standard $d$-dimensional Brownian motion. Now, set $\tau_d=0$ and define a process $Y$ by
\[
Y(t) = Y^k(t-(\tau_k+\cdots+\tau_d)), \quad t\in[\tau_k+\cdots+\tau_d,\tau_{k-1}+\cdots+\tau_d),
\]
for $k=2,\ldots,d$, and
\[
Y(t) = Y^2(\tau_1)+d^{-1/2}W(t-(\tau_1+\cdots+\tau_d)), \quad t\ge \tau_1+\cdots+\tau_d.
\]
Thus $Y$ first follows the dynamics of $Y^d$ while in the interior of $K$ (a possibly empty time interval); then $Y$ follows the dynamics of $Y^{d-1}$ while inside the relative interior of a $(d-1)$-dimensional face, and so on, until it reaches a face of dimension zero or one. From that point onwards, it follows a Brownian motion, scaled so that the quadratic variation has unit trace. Since the law of each $Y^k$ is chosen from the sets $\Pcal_x^*$, it is straightforward but somewhat tedious to make this intuitive description rigorous. One also finds that $Y$ is a continuous martingale, starting at $Y(0)=\bar x$ and with $\tr\langle Y\rangle(t)\equiv t$, and (using that $v|_K$ is continuous) such that $v(Y(t))=v(\bar x)-t$ for all $t<\tau_{K\setminus\Fcal_1}=\inf\{t\ge0\colon Y(t)\notin K\setminus\Fcal_1\}$. Moreover, $Y$ does not leave $K$ before reaching $\Fcal_1$, but then leaves $K$ immediately since its dynamics switches to that of a scaled standard Brownian motion in $d\ge2$ dimensions. In particular, $\tau_{K\setminus\Fcal_1}=\tau_K$, and we have $\tau_K=v(\bar x)-v(Y(\tau_K))=v(\bar x)$, using also that $v=0$ on $\Fcal_1$. The law $\P$ of $Y$ is therefore the required optimal law.
\end{proof}

\bibliographystyle{plainnat}
\bibliography{bibl}

\end{document}